\newtheorem{theorem}{Theorem}[section]
\newtheorem*{theorem*}{Theorem}
\newtheorem{lemma}[theorem]{Lemma}
\newtheorem*{lemma*}{Lemma}
\newtheorem{corollary}[theorem]{Corollary}
\newtheorem*{corollary*}{Corollary}
\newtheorem{proposition}[theorem]{Proposition}
\newtheorem*{proposition*}{Proposition}
\theoremstyle{definition}
\newtheorem{definition}[theorem]{Definition}
\newtheorem*{definition*}{Definition}
\newtheorem{example}[theorem]{Example}
\newtheorem*{example*}{Example}
\newtheorem*{examples*}{Examples}
\newtheorem{remark}[theorem]{Remark}
\newtheorem*{remark*}{Remark}
\newtheorem{result}{Result}
\newtheorem*{question*}{Question}
\newtheorem*{problem*}{Problem}
\newtheorem*{notation*}{Notation}
\newtheorem*{note*}{Note}
\newtheorem*{algorithm*}{Algorithm}
\renewcommand{\arraystretch}{0.75}
\newcommand{\Colon}{\:\mathbin{:}\:}
\newcommand{\fk}[1]{{\mathfrak #1}}
\newcommand{\proofblock}{\hspace{\fill}\fbox{\rule{0pt}{3pt}\rule{3pt}{0pt}}}
\newcommand{\LEQ}{\leqslant}
\newcommand{\GEQ}{\geqslant}
\let\unlhd\trianglelefteq
\newcounter{xcoord}
\newcounter{ycoord}
\title{Minimal determining sets for certain $W$-graph ideals}
\author{T. P. McDonough%
\thanks{%
\textit{Department of Mathematics,
Aberystwyth University,
Aberystwyth SY23 3BZ,
United Kingdom.}
{E-mail: tpd@aber.ac.uk}
}
\ and\ \
C. A. Pallikaros%
\thanks{%
\textit{Department of Mathematics and Statistics,
University of Cyprus,
P.O.Box 20537,
1678 Nicosia,
Cyprus.}
{E-mail: pallikar@ucy.ac.cy}
}
}
\date{December 12, 2021} 
\begin{document}
\maketitle
\begin{abstract}
We consider Kazhdan-Lusztig cells of the symmetric group $S_n$ containing the longest element of a standard parabolic subgroup of $S_n$.
Extending some  of the ideas in [{Beitr{\"a}ge} zur Algebra und Geometrie, \textbf{59} (2018), no.~3, 523--547] and [Journal of Algebra and Its Applications, {\bf20} (2021), no.~10, 2150181], we determine the rim of some additional families  of cells and also of certain induced unions of cells.
These rims provide minimal determining sets for certain $W$-graph ideals introduced in [Journal of Algebra, {\bf361} (2012), 188--212].
\\[1.5ex]
Key words: $W$-graph ideal; Kazhdan-Lusztig cell;  reduced form
\\
2020 MSC Classification: 05E10; 20C08; 20C30
\end{abstract}
\section{Introduction}

In~\cite{KLu79} Kazhdan and Lusztig introduced the left cells, the right cells and the two-sided cells of a Coxeter group $W$ as a means of investigating the representation theory of $W$ and its associated Hecke algebra~$\mathcal H$.
It is also shown in~\cite{KLu79} that in the case $W=S_n$ the Robinson-Schensted correspondence gives a combinatorial description of the Kazhdan-Lusztig cells.
However, this does not lead to some straightforward way of obtaining reduced forms for the elements in these cells.

The present paper is a continuation of the work in~\cite{MPa08,MPa15,MPa17,MPa21} and it is concerned with the problem of determining reduced expressions for all the elements in a given cell and also in certain induced unions of cells.
(See~\cite{BVo83}, \cite{Roi98} and~\cite{Gec03} for the induction of Kazhdan-Lusztig cells.)
The focus is on (right) cells containing the longest element of a Young subgroup of $S_n$ and also on the union of cells obtained by inducing such cells to $S_{n+1}$.
By extending certain ideas in~\cite{MPa17,MPa21} we are able to determine the rim of some additional families of Kazhdan-Lusztig cells and also of the corresponding induced union of cells.
As a result, reduced forms for all the elements in these subsets of $S_n$ can be obtained directly.

Motivated by the $W$-graph structure with which the regular representation of $\mathcal H$ is endowed in~\cite{KLu79}, Howlett and Nguyen~\cite{HN12} introduced a notion of $W$-graph ideal in $W$.
The work in this paper is closely connected with the work in~\cite{HN12,Ngu12,Ngu15,HN16}, as the elements of the rims obtained for the various subsets of $W=S_n$ we investigate in fact provide minimal determining sets for certain (right) $W$-graph ideals.

The paper is organized as follows:
In Section~\ref{sec:2c} our aim is to investigate the connection between right ideals in $W$ and root systems and, via this approach, in Proposition~\ref{prop:3.12} we show how the minimal determining set of the right ideal $Z\mathfrak X_J$ in $W$ can be obtained explicitly given the minimal determining set of a right ideal $Z$ in $W_J$.
(By $W_J$ we denote a standard parabolic subgroup of $W$ and by $\mathfrak X_J$ the set of distinguished right coset representatives of $W_J$ in $W$.)
In Section~\ref{subsec_sym_gr} we recall some background on ordered $k$-paths and admissible diagrams from~\cite{MPa17,MPa21}, while Section~\ref{sec:3a} is mainly concerned with the identification and investigation of certain key ordered $k$-paths having an admissible diagram as their support.
Finally, in Section~\ref{sec5}, using the ideas developed in the earlier parts of the paper, we obtain explicit descriptions for the minimal determining sets of certain $W$-graph ideals in $S_n$ corresponding to Kazhdan-Lusztig cells and induced unions of such cells (see Theorem~\ref{thm:3.13a} and Remark~\ref{Rem5.2}).

\begin{section}
{Root systems and ideals in $W$}
\label{sec:2c}

For a Coxeter system $(W,S)$, Kazhdan and Lusztig \cite{KLu79}
introduced the notion of a $W$-graph and used this notion to define  three preorders $\LEQ_L$, $\LEQ_R$ and $\LEQ_{LR}$, with
corresponding equivalence relations $\sim_L$, $\sim_R$ and $\sim_{LR}$,
whose equivalence classes are called
\emph{left cells}, \emph{right cells} and \emph{two-sided cells},
respectively.
Each cell of $W$ provides a representation of $W$, with the $C$-basis of the Hecke algebra $\mathcal H$ of $(W,S)$ playing an important role in the construction of this representation; see~\mbox{\cite[\S~1]{KLu79}}.
The $C$-basis equips the regular representation of $\mathcal H$ with a $W$-graph structure, one of the facts playing an important role in~\cite{KLu79}.

For the rest of this section we assume that $(W,S)$ is a Coxeter system with $W$ finite.
Also let $J\subseteq S$. Then $(W_J,J)$ is a Coxeter system, where $W_J=\langle J\rangle$ denotes the standard parabolic subgroup determined by a subset $J$ of $S$.
We denote by $w_J$ the longest element of $W_J$ and by $\mathfrak{X}_J$ the set of minimum length elements in the right cosets of $W_J$ in $W$ (the distinguished right coset representatives).
Recall the \emph{prefix relation} on the elements of~$W$:
if $x,y\in W$ we say that $x$ is a \emph{prefix} of $y$ if $y$ has a \emph{reduced form} beginning with a reduced form for $x$.
We then have that $\mathfrak X_J$ is the set of prefixes of $d_J$ where $d_J$ is the longest element of $\mathfrak X_J$ (see~\cite[Lemma~2.2.1]{GPf00}).
Also recall that the right cell containing $w_J$ is contained in $w_J\mathfrak{X}_J$ (see~\cite[5.26.1]{Lus84b}).

A \emph{right ideal} in $W$ is a subset in $W$ which is closed under the taking of prefixes.
Given a right ideal $\mathscr{I}$ in $W$, we call the set $Y(\mathscr{I})=\{x\in\mathscr{I}\colon x$ is not the prefix of any other $y\in\mathscr{I}\}$ the \emph{minimal determining set} for $\mathscr{I}$ since knowledge of $Y(\mathscr{I})$ leads directly to $\mathscr{I}$ by taking all prefixes.

The $W$-graphs introduced in~\cite{KLu79} encode in a very concise way the structure of certain representations of $\mathcal H$.
Motivated by the ideas in~\cite{KLu79}, Howlett and Nguyen in~\cite{HN12} introduced the notion of a $W$-graph ideal in $W$ and produced, for any such ideal, a $W$-graph via an algorithm like the Kazhdan-Lusztig algorithm.
A $W$-graph ideal is an ideal in $W$ with the additional property that it admits a module structure in a very particular way (see~\cite[Definition~5.1]{HN12}).
In particular, the subsets $Z_J=\{d\in\mathfrak X_J\colon w_Jd\sim_Rw_J\}$ and $Z\mathfrak{X}_J$ (where $Z$ is a $W$-graph ideal with respect to $J$ in $W_J$) of $W$
are $W$-graph ideals (with respect to $J$) in $W$ (see~\cite[Theorem~5.4]{Ngu12} and \cite[Theorem~9.2]{HN12}).

As a consequence, if $(\hat W,\hat S)$ is a Coxeter system with $S\subseteq\hat S$ and $\hat{\mathfrak X}$ is the set of distinguished right coset representatives of $W$ in $\hat W$, the set $Z_J\hat{\mathfrak X}$ is a $W$-graph ideal with respect to $J$ in $\hat W$.
Note that the set $w_J Z_J\hat{\mathfrak X}$ is the union of Kazhdan-Lusztig cells in $\hat W$ obtained from inducing to $\hat W$ the cell containing $w_J$ in $W$.

Considering the connection between right ideals and root systems, our aim in this section is to relate explicitly, via this approach, the minimal determining sets of the right ideals $Z$ and $Z\mathfrak X_J$ of $W_J$ and $W$ respectively (see Proposition~\ref{prop:3.12}).

Let $\boldsymbol\Phi$ be the root system corresponding to $(W,S)$ and let
$V=\langle\boldsymbol\Phi\rangle$;
let $\boldsymbol\Sigma$ be a set of fundamental roots for $\boldsymbol\Phi$,
let $\boldsymbol\Phi^{+}$ be the positive roots in $\boldsymbol\Phi$ and
$\boldsymbol\Phi^{-}=-\boldsymbol\Phi^{+}$ the negative roots.
Also let $\boldsymbol\Phi_J$ be the subsystem of $\boldsymbol\Phi$
corresponding to the subsystem $(W_J,J)$ of $(W,S)$.

For each $s\in S$, let $\alpha_s$ be the root corresponding to $s$
and let $\rho_s$ be the reflection corresponding to $s$.
So $\alpha\rho_s
=\alpha-\frac{2(\alpha,\alpha_s)}{(\alpha_s,\alpha_s)}\alpha_s$ for each $\alpha\in V$ (we suppose that $GL(V)$ acts on $V$ on the right).
There is an  injective group homomorphism
$\rho\colon W\rightarrow GL(V)$ defined by $x\mapsto \rho_x=\rho_{u_1}\cdots\rho_{u_r}$ where
$u_1,\ldots u_r\in S$ and $u_1\cdots u_r$
is any reduced word for $x\in W$.
It will be convenient to write  $vx$ for $v\rho_x$ where $v\in V$ and
$x\in W$.

For any $\beta\in V$, $\beta\neq0$, the reflection $\rho_{\beta}$
is given by $\alpha\rho_{\beta}
=\alpha-\frac{2(\alpha,\beta)}{(\beta,\beta)}\beta$
for each $\alpha\in V$.
So $\rho_s=\rho_{\alpha_s}$.
For $x\in W$, let $N(x)$ be the set of positive roots of $W$ which
are mapped by $x$ to negative roots.
That is, $N(x)
=\boldsymbol\Phi^+\cap\boldsymbol\Phi^-x^{-1}$.
\par

Below we collect some basic results on roots and the length function.

\begin{result}[{\cite[Theorem~p.111, Lemma~p.116]{Hum90}}]\label{thm1} 
Let $x\in W$, $s\in S$ and $\alpha,\beta\in\boldsymbol\Phi$.

(i) If $l(sx)>l(x)$ then $\alpha_s x\in\boldsymbol\Phi^{+}$.
If $l(sx)<l(x)$ then $\alpha_s x\in\boldsymbol\Phi^{-}$.

(ii) If $\alpha x=\beta$ then $x^{-1} \rho_{\alpha} x =\rho_{\beta}$.
%
\end{result}

\begin{result}[{\cite[Proposition~1.3.5]{GPf00}}]
\label{thm4} 
Let $x\in W$ and write $x=u_1\cdots u_r$ where $u_1,\ldots,u_r\in S$ and $r=l(x)$;
that is, $x$ is written as a reduced word.
Let $\beta_i=\alpha_{u_i}u_{i-1}\cdots u_1$ for $1\LEQ i\LEQ r$, interpreting $\beta_1$ to be $\alpha_{u_1}$.
Then $|N(x)|=l(x)$ and $N(x)=\{\beta_i\colon 1\LEQ i\LEQ r\}$.
\end{result}

\begin{corollary}
\label{cor5}
Let $x\in W$ and let $x'$ be a prefix of $x$.
Then $N(x')\subseteq N(x)$.
\end{corollary}
\begin{proof}
We may write $x'=u_1\cdots u_p$ and $x=u_1\cdots u_p\cdots u_r$ where
$u_1,\ldots,u_r\in S$, $l(x')=p$ and $l(x)=r$.
Using the notation in Result~\ref{thm4}, we get
$N(x)=\{\beta_i\colon 1\LEQ i\LEQ r\}$ and
$N(x')=\{\beta_i\colon 1\LEQ i\LEQ p\}$.
Hence, $N(x')\subseteq N(x)$.
\end{proof}
\begin{corollary}
\label{cor6}
Let $x\in W$, $s\in S$ and suppose that $l(sx)<l(x)$.
Then $N(x)=\left(N(sx)\right)s\cup\{\alpha_s\}$.
\end{corollary}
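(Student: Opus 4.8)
The plan is to compute both sides from a single reduced word for $x$, using Result~\ref{thm4}. Since $l(sx)<l(x)$ and left multiplication by $s\in S$ changes the length by exactly $\pm1$, we have $l(sx)=l(x)-1$; writing $r=l(x)$ and choosing a reduced word $sx=v_1\cdots v_{r-1}$, the word $x=s\,v_1\cdots v_{r-1}$ has length $r$ and is therefore reduced. First I would apply Result~\ref{thm4} to this reduced word for $x$, taking $u_1=s$ and $u_i=v_{i-1}$ for $2\LEQ i\LEQ r$, to get $N(x)=\{\beta_1,\ldots,\beta_r\}$ with $\beta_1=\alpha_s$ and $\beta_i=\alpha_{u_i}u_{i-1}\cdots u_1$ for $i\GEQ2$.

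Next I would apply Result~\ref{thm4} to the reduced word $v_1\cdots v_{r-1}$ for $sx$, obtaining $N(sx)=\{\gamma_1,\ldots,\gamma_{r-1}\}$ with $\gamma_j=\alpha_{v_j}v_{j-1}\cdots v_1$. The key step is then the identity $\beta_i=\gamma_{i-1}s$ for $2\LEQ i\LEQ r$: since $u_1=s$ and $\alpha_{u_i}u_{i-1}\cdots u_2=\alpha_{v_{i-1}}v_{i-2}\cdots v_1=\gamma_{i-1}$, we have $\beta_i=(\alpha_{u_i}u_{i-1}\cdots u_2)u_1=\gamma_{i-1}s$, recalling that $vx$ denotes $v\rho_x$ and that $\rho$ is a homomorphism. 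Hence $\{\beta_2,\ldots,\beta_r\}=(N(sx))s$, and combining with $\beta_1=\alpha_s$ gives $N(x)=\{\alpha_s\}\cup(N(sx))s$, as required.

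The bookkeeping of the indices and of the action of $\rho_s$ in the identity $\beta_i=\gamma_{i-1}s$ is the only point demanding care; there is no real obstacle here. Alternatively, a reduced-word-free argument runs as follows. By Result~\ref{thm1}(i), $l(sx)<l(x)$ gives $\alpha_s x\in\boldsymbol\Phi^-$, so $\alpha_s\in N(x)$; applying the same result to $sx$ (noting $l(s(sx))=l(x)>l(sx)$) gives $\alpha_s(sx)\in\boldsymbol\Phi^+$, so $\alpha_s\notin N(sx)$. For $\beta\in N(sx)$ one then has $\beta\neq\alpha_s$, whence $\beta s\in\boldsymbol\Phi^+$ (as $\rho_s$ preserves positivity of positive roots other than $\alpha_s$), and $(\beta s)x=\beta\rho_s\rho_{sx}\cdots$ simplifies via $\rho_{sx}=\rho_s\rho_x$ to $\beta\rho_{sx}\in\boldsymbol\Phi^-$, so $\beta s\in N(x)$; the reverse inclusion $N(x)\setminus\{\alpha_s\}\subseteq(N(sx))s$ follows symmetrically using $\alpha x=(\alpha s)(sx)$. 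In this variant the subtle points are verifying $\alpha_s\notin N(sx)$ and invoking the permutation action of $\rho_s$ on $\boldsymbol\Phi^+\setminus\{\alpha_s\}$, a standard fact in the sense of Result~\ref{thm1}.
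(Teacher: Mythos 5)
Your first argument is essentially identical to the paper's proof: the paper likewise works from a reduced word $x=u_1\cdots u_r$ with $u_1=s$ (equivalently, prepending $s$ to a reduced word for $sx$), applies Result~\ref{thm4} to both $x$ and $sx$, and observes that the roots for $x$ are $\beta_1=\alpha_s$ together with the roots $\gamma_i$ for $sx$ multiplied by $s$ --- exactly your identity $\beta_i=\gamma_{i-1}s$ up to an index shift. The reduced-word-free variant you add is also sound (the fact that $\rho_s$ permutes $\boldsymbol\Phi^+\setminus\{\alpha_s\}$ is standard, from the same source as Result~\ref{thm1}), but it is extra; the main argument already matches the paper's.
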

\begin{proof}
Write $x=u_1\cdots u_r$ where $u_1,\ldots,u_r\in S$, $u_1=s$ and
$l(x)=r$.
Let $\gamma_i=\alpha_{u_i}u_{i-1}\cdots u_2$, $2\LEQ i\LEQ r$.
By Result~\ref{thm4}, $N(sx)=\{\gamma_i\colon 2\LEQ i\LEQ r\}$.
Since $\beta_i=\gamma_i s$, $2\LEQ i\LEQ r$, and
$\beta_1=\alpha_{u_1}=\alpha_s$, we get the desired result.
\end{proof}
\begin{corollary}
\label{cor7}
Let $x,x'\in W$ and suppose that $N(x')\subseteq N(x)$.
Then $x'$ is a prefix of $x$.
\end{corollary}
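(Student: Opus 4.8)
The plan is to establish the converse of Corollary~\ref{cor5} by induction on $l(x')$, at each stage peeling off a common left descent of $x'$ and $x$ and using Corollary~\ref{cor6} to reduce to a shorter instance. The base case $l(x')=0$ is immediate: then $x'$ is the identity, which is a prefix of every element (and indeed $N(x')=\emptyset\subseteq N(x)$). So I would assume $l(x')\GEQ1$ and that the statement holds for every element of length less than $l(x')$.

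For the inductive step I would first produce a simple reflection that is a left descent of both elements. Since $l(x')\GEQ1$, choose $s\in S$ with $l(sx')<l(x')$; by Corollary~\ref{cor6}, $\alpha_s\in N(x')$, and the hypothesis $N(x')\subseteq N(x)$ then forces $\alpha_s\in N(x)$. Unwinding the definition $N(x)=\boldsymbol\Phi^+\cap\boldsymbol\Phi^-x^{-1}$, membership $\alpha_s\in N(x)$ says exactly that $\alpha_s x\in\boldsymbol\Phi^-$, which by Result~\ref{thm1}(i) is equivalent to $l(sx)<l(x)$. Thus $s$ is a left descent of $x$ as well, and I may apply Corollary~\ref{cor6} to each element to write $N(x')=(N(sx'))s\cup\{\alpha_s\}$ and $N(x)=(N(sx))s\cup\{\alpha_s\}$.

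The crux is to pass from $N(x')\subseteq N(x)$ to $N(sx')\subseteq N(sx)$, and the one point requiring care is that the $\alpha_s$ terms split off cleanly. I would check that $\alpha_s$ lies in neither $(N(sx'))s$ nor $(N(sx))s$: any element of $N(sx')$ (resp.\ $N(sx)$) is a positive root $\gamma$, and $\gamma s=\alpha_s$ would give $\gamma=\alpha_s s=-\alpha_s\in\boldsymbol\Phi^-$, a contradiction. Granting this, the inclusion of the two displayed sets restricts to $(N(sx'))s\subseteq(N(sx))s$, and applying the involution $s$ (which permutes roots bijectively) to both sides yields $N(sx')\subseteq N(sx)$. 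Since $l(sx')=l(x')-1$, the induction hypothesis now gives that $sx'$ is a prefix of $sx$.

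Finally I would lift the conclusion back to $x'$ and $x$. Taking a reduced word for $sx$ that begins with a reduced word for $sx'$ and prepending $s$ to both, the equalities $l(x')=l(sx')+1$ and $l(x)=l(sx)+1$ guarantee that the resulting words for $x'$ and $x$ are reduced; hence a reduced word for $x$ begins with one for $x'$, so $x'$ is a prefix of $x$, completing the induction. I expect the only genuinely delicate step to be the bookkeeping that $\alpha_s\notin(N(sx))s$, ensuring the $\alpha_s$ terms cancel when passing to $sx'$ and $sx$; the remainder is a routine combination of Results~\ref{thm1} and~\ref{thm4} with Corollary~\ref{cor6}. Combined with Corollary~\ref{cor5}, this would yield the clean characterization that $x'$ is a prefix of $x$ if and only if $N(x')\subseteq N(x)$.
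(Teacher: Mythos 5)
Your proof is correct and follows essentially the same route as the paper's: induct on $l(x')$, pick a left descent $s$ of $x'$, show it is also a left descent of $x$, peel it off from both sides using Corollary~\ref{cor6}, and lift the conclusion back by prepending $s$. The only local difference is that you obtain $l(sx)<l(x)$ directly from $\alpha_s\in N(x)$ via the definition of $N$ and Result~\ref{thm1}(i) (in fact a slightly slicker step than the paper's word-rewriting argument through Result~\ref{thm1}(ii)), and your explicit check that $\alpha_s\notin\left(N(sx')\right)s$ makes precise a cancellation the paper leaves implicit.
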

\begin{proof}
Write $x=u_1\cdots u_r$ and $x'=u_1'\cdots u_p'$ where
$u_1,\ldots,u_r,u_1',\ldots, u_p'\in S$, $l(x)=r$ and $l(x')=p$.
If $l(x')=0$ then $x'=1$ which is trivially a prefix of $x$.
So we may suppose that $l(x')>0$.
\par
Let $\beta_i=\alpha_{u_i}u_{i-1}\cdots u_1$
for $1\LEQ i\LEQ r$ and let
$\gamma_i=\alpha_{u_i'}u_{i-1}'\cdots u_1'$, $1\LEQ i\LEQ p$.
By Result~\ref{thm4}, $N({x})=\{\beta_i\colon 1\LEQ i\LEQ r\}$ and
$N(x')=\{\gamma_i\colon 1\LEQ i\LEQ p\}$.
Since $\alpha_{u_1'}=\gamma_1\in N(x')\subseteq N(x)$,
$\alpha_{u_1'}=\beta_i$ for some $1\LEQ i\LEQ r$.
That is, $\alpha_{u_1'}=\alpha_{u_i}u_{i-1}\cdots u_1$.
By Result~\ref{thm1}(ii),
$u_1\cdots u_{i-1}\cdot u_i\cdot u_{i-1}\cdots u_1=u_1'$.
So $u_1\cdots u_i=u_1'\cdot u_1\cdots u_{i-1}$.
Replacing
$u_1\cdots u_i$ in the original reduced word for $x$ by
$u_1'\cdot u_1\cdots u_{i-1}$, we get a reduced word for
$x$ starting with $u_1'$.
That is, $l(u_1'x)<l(x)$.
By Corollary~\ref{cor6},
$N(x)=\left(N(u_1'x)\right)u_1'\cup\{\alpha_{u_1'}\}$.
\par
Also by Corollary~\ref{cor6},
$N(x')=\left(N(u_1'x')\right)u_1'\cup\{\alpha_{u_1'}\}$
since $l(u_1'x')<l(x')$.
Hence, $N(u_1'x')\subseteq N(u_1'x)$.
By induction, $u_1'x'$ is a prefix of $u_1'x$.
Hence, $x'$ is a prefix of~$x$.
\end{proof}
\par
Combining Corollaries~\ref{cor5} and~\ref{cor7}, we get the following
proposition.
\begin{proposition}
\label{cor8}
Let $x,x'\in W$. Then $N(x')\subseteq N(x)$ if, and only if,
$x'$ is a prefix of $x$.
In particular, $N(x')=N(x)$ if, and only if, $x'=x$.
\end{proposition}

\begin{proposition}
\label{prop12}
Let $x=ud\in W$ with $u\in W_J$ and $d\in\fk{X}_J$.
Then $N(x)=N(u)\cup N(d)u^{-1}$ and
$N(u)\cap N(d)u^{-1}=\varnothing$.
\end{proposition}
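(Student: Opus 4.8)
The plan is to reduce everything to Result~\ref{thm4} by exhibiting a single reduced word for $x$ that splits cleanly into a reduced word for $u$ followed by a reduced word for $d$. The starting point is the defining property of $\fk{X}_J$: since $d$ is the minimum length element of the coset $W_Jd$, one has $l(ud)=l(u)+l(d)$ for every $u\in W_J$. Writing $u=u_1\cdots u_p$ and $d=u_{p+1}\cdots u_r$ as reduced words, with $p=l(u)$ and $r-p=l(d)$, additivity of length guarantees that $x=u_1\cdots u_r$ is itself a reduced word, of length $r=l(x)$.

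Next I would apply Result~\ref{thm4} three times, to $x$, to $u$ and to $d$. Set $\beta_i=\alpha_{u_i}u_{i-1}\cdots u_1$ for $1\LEQ i\LEQ r$, so that $N(x)=\{\beta_i\colon 1\LEQ i\LEQ r\}$. For the first block $1\LEQ i\LEQ p$ the factor $u_{i-1}\cdots u_1$ involves only letters of $u$, so Result~\ref{thm4} applied to the reduced word $u_1\cdots u_p$ gives $N(u)=\{\beta_i\colon 1\LEQ i\LEQ p\}$. For the second block set $\gamma_i=\alpha_{u_i}u_{i-1}\cdots u_{p+1}$ for $p+1\LEQ i\LEQ r$; Result~\ref{thm4} applied to $d=u_{p+1}\cdots u_r$ then yields $N(d)=\{\gamma_i\colon p+1\LEQ i\LEQ r\}$. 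The crucial identity is that, for $i$ in the second block, $\beta_i=\gamma_i u^{-1}$: indeed $u^{-1}=u_p\cdots u_1$ (each $u_j$ is an involution), and since $\rho$ is a homomorphism and acts on the right, $\gamma_i u^{-1}=\alpha_{u_i}u_{i-1}\cdots u_{p+1}\,u_p\cdots u_1=\beta_i$. Hence $\{\beta_i\colon p+1\LEQ i\LEQ r\}=N(d)u^{-1}$, and splitting $N(x)$ into its two blocks gives $N(x)=N(u)\cup N(d)u^{-1}$.

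For disjointness I would argue by cardinality, which keeps the proof within the tools already available. Result~\ref{thm4} gives $|N(x)|=r$, $|N(u)|=p$ and $|N(d)|=r-p$; since $u^{-1}$ acts on $V$ as a bijection, $|N(d)u^{-1}|=|N(d)|=r-p$. As the union $N(u)\cup N(d)u^{-1}$ equals $N(x)$ while $|N(u)|+|N(d)u^{-1}|=r=|N(x)|$, the two sets must be disjoint. Alternatively one can note directly that $N(u)\subseteq\boldsymbol\Phi_J$ whereas $N(d)u^{-1}$ avoids $\boldsymbol\Phi_J$, using that $u^{-1}\in W_J$ stabilizes $\boldsymbol\Phi_J$ and that $N(d)$ meets $\boldsymbol\Phi_J$ trivially because $d\in\fk{X}_J$. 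The only points needing genuine care are the length additivity $l(ud)=l(u)+l(d)$ and the index bookkeeping that makes $\beta_i=\gamma_i u^{-1}$ come out correctly under the right action convention; once these are in place the statement is immediate.
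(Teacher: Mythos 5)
Your proof is correct and follows essentially the same route as the paper: concatenate reduced words for $u$ and $d$ (using $l(ud)=l(u)+l(d)$ for $d\in\fk{X}_J$), apply Result~\ref{thm4} to $x$, $u$ and $d$, and observe that $\gamma_iu^{-1}=\beta_i$ for the second block of indices, with disjointness forced by the count $|N(x)|=l(x)=l(u)+l(d)$. The paper leaves the cardinality step implicit in the phrase ``disjoint union,'' so your explicit counting (and the optional root-system alternative via $\boldsymbol\Phi_J$) only spells out what the paper compresses.
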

\begin{proof}
Let $u=u_1\cdots u_q$ and $d=u_{q+1}\ldots u_r$, where $u_i\in S$ for
$1\LEQ i\LEQ r$, be reduced words for $u\in W_J$ and $d\in\fk{X}_J$.
Then $x=u_1\cdots u_r$ is also a reduced word.
Let $\beta_i=\alpha_{u_i}u_{i-1}\cdots u_1$ for $1\LEQ i\LEQ r$.
By Result~\ref{thm4},
$N(x)=\{\beta_i\colon 1\LEQ i\LEQ r\}$ and
$N(u)=\{\beta_i\colon 1\LEQ i\LEQ q\}$.
Let $\gamma_i=\alpha_{u_i}u_{i-1}\cdots u_{q+1}$ for $q+1\LEQ i\LEQ r$.
Then $\gamma_iu^{-1}=\beta_i$ for $q+1\LEQ i\LEQ r$ and,
again by Result~\ref{thm4},
$N(d)=\{\gamma_i\colon q+1\LEQ i\LEQ r\}$.
Thus, $N(x)$ is the disjoint union of $N(u)$ and $N(d)u^{-1}$.
\end{proof}
\begin{proposition}
\label{prop13}
Let $\boldsymbol\Phi^+_J=\boldsymbol\Phi_J\cap\boldsymbol\Phi^+$.
If $v\in W_J$ then $(\boldsymbol\Phi^+-\boldsymbol\Phi^+_J)v\subseteq\boldsymbol\Phi^+-\boldsymbol\Phi^+_J$.
\end{proposition}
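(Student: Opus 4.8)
The plan is to reduce the statement to the case of a single Coxeter generator $s\in J$ and then exploit the description of $\boldsymbol\Phi_J$ as the set of roots lying in the span of $\{\alpha_t\colon t\in J\}$, together with the standard sign dichotomy for roots. First I would observe that it suffices to prove $(\boldsymbol\Phi^+-\boldsymbol\Phi^+_J)s\subseteq\boldsymbol\Phi^+-\boldsymbol\Phi^+_J$ for each individual $s\in J$. Indeed, writing $v=s_1\cdots s_k$ as a product of elements of $J$ and recalling that $\rho$ is a group homomorphism, so that $\alpha v=\alpha\rho_{s_1}\cdots\rho_{s_k}$ with $s_1$ acting first, the inclusion for a general $v\in W_J$ follows by applying the single-generator inclusion $k$ times: setting $A=\boldsymbol\Phi^+-\boldsymbol\Phi^+_J$, if $As\subseteq A$ for all $s\in J$ then $Av=(\cdots((A)s_1)s_2\cdots)s_k\subseteq A$.

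For the single-generator case, let $s\in J$ and $\alpha\in\boldsymbol\Phi^+-\boldsymbol\Phi^+_J$. Expanding $\alpha=\sum_{t\in S}c_t\alpha_t$ in terms of the fundamental roots $\boldsymbol\Sigma$, all the coefficients $c_t$ are non-negative because $\alpha$ is a positive root, and since $\alpha\notin\boldsymbol\Phi_J$ there must be at least one index $t_0\notin J$ with $c_{t_0}>0$. The action of $s$ is $\alpha s=\alpha\rho_s=\alpha-\frac{2(\alpha,\alpha_s)}{(\alpha_s,\alpha_s)}\alpha_s$, which differs from $\alpha$ only in the coefficient of $\alpha_s$; as $s\in J$ we have $t_0\neq s$, so the coefficient of $\alpha_{t_0}$ in $\alpha s$ is unchanged and still equal to $c_{t_0}>0$.

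To finish I would invoke two facts. Since $\rho_s$ permutes $\boldsymbol\Phi$, the vector $\alpha s$ is again a root, so its coordinates relative to $\boldsymbol\Sigma$ are either all non-negative or all non-positive; because the $\alpha_{t_0}$-coordinate is strictly positive, $\alpha s$ must be a positive root, that is, $\alpha s\in\boldsymbol\Phi^+$. Furthermore, $\alpha s$ carries a nonzero coefficient on $\alpha_{t_0}$ with $t_0\notin J$, so it cannot be a $\boldsymbol\Sigma_J$-combination and hence $\alpha s\notin\boldsymbol\Phi_J$. Therefore $\alpha s\in\boldsymbol\Phi^+-\boldsymbol\Phi^+_J$, which completes the single-generator case and, by the reduction above, the proposition.

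The only genuinely non-routine ingredients—and hence where I would be most careful—are the sign dichotomy for roots (every element of $\boldsymbol\Phi$ is a combination of fundamental roots whose coefficients all share one sign) and the identification of $\boldsymbol\Phi_J$ with the roots supported on $\{\alpha_t\colon t\in J\}$; once these two structural facts about the root system are in hand, the argument is pure bookkeeping on coefficients, since the reflection $\rho_s$ alters only the $\alpha_s$-coordinate.
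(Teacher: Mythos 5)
Your proof is correct, but it takes a genuinely different route from the paper's. You reduce to a single generator $s\in J$ and run the classical coefficient-support argument: a positive root $\alpha\notin\boldsymbol\Phi_J$ has a strictly positive coefficient on some $\alpha_{t_0}$ with $t_0\notin J$, the reflection $\rho_s$ changes only the $\alpha_s$-coordinate, and the sign dichotomy then forces $\alpha s\in\boldsymbol\Phi^+$ while the surviving coefficient on $\alpha_{t_0}$ keeps $\alpha s$ out of $\boldsymbol\Phi_J$. The paper instead argues globally with inversion sets: for $v\in W_J$ one has $N_J(v)\subseteq N(v)$, and since $|N_J(v)|=l_J(v)=l(v)=|N(v)|$ the inclusion is an equality, so $\boldsymbol\Psi^+v\cap\boldsymbol\Psi^-=\boldsymbol\Phi^+v\cap\boldsymbol\Phi^-$, whence no positive root outside $\boldsymbol\Phi^+_J$ can be sent to a negative root; $W_J$-invariance of $\boldsymbol\Phi_J$ finishes it. The two proofs lean on different structural inputs: yours needs the identification $\boldsymbol\Phi_J=\boldsymbol\Phi\cap\langle\alpha_t\colon t\in J\rangle$ (together with the sign dichotomy), which you correctly flag as the non-routine ingredient and which is standard (e.g.\ Humphreys, \S5.5), while the paper needs only that $\boldsymbol\Phi_J$ is the root system of $(W_J,J)$ and that the length function of $W$ restricts to that of $W_J$. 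Your version is more elementary and gives the slightly sharper local statement that each $s\in J$ preserves (indeed permutes) $\boldsymbol\Phi^+-\boldsymbol\Phi^+_J$; the paper's version fits seamlessly into its surrounding $N(\cdot)$-machinery, which is reused in Propositions~\ref{prop12} and~\ref{prop14} and Corollary~\ref{cor_3.11}. One small point of care in your reduction, which you did handle correctly: with the paper's right-action convention $\alpha\rho_x=\alpha\rho_{u_1}\cdots\rho_{u_r}$, the first letter of the word acts first, and the expression $v=s_1\cdots s_k$ need not be reduced for the iteration $Av=(\cdots(As_1)\cdots)s_k\subseteq A$ to go through.
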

\begin{proof}
Recall that $N(v)$ is the set of positive roots of $W$ which
are mapped by $v$ to negative roots;
that is, $N(v)
=\boldsymbol\Phi^+\cap\boldsymbol\Phi^-v^{-1}$.
Moreover, $l(v)=|N(v)|=|\boldsymbol\Phi^+\cap\boldsymbol\Phi^-v^{-1}|$.
\par
Let $\boldsymbol\Psi=\boldsymbol\Phi_J$, let $\boldsymbol\Psi^+$ be the positive roots of $\boldsymbol\Phi_J$
and $\boldsymbol\Psi^-=-\boldsymbol\Psi^+$ be the corresponding negative roots.
Let $N_J(v)$ be the set of positive roots of $W_J$ which
are mapped by $v$ to negative roots and let $l_J$ be the length function of $(W_J,J)$.
So $N_J(v)
=\boldsymbol\Psi^+\cap\boldsymbol\Psi^-v^{-1}$ and
$l_J(v)=|N_J(v)|=|\boldsymbol\Psi^+\cap\boldsymbol\Psi^-v^{-1}|$.
Since $\boldsymbol\Psi\subseteq\boldsymbol\Phi$,
$\boldsymbol\Psi^+\subseteq\boldsymbol\Phi^+$ and $\boldsymbol\Psi^-\subseteq\boldsymbol\Phi^-$,
we have
$N_J(v)=\boldsymbol\Psi^+\cap\boldsymbol\Psi^-v^{-1}\subseteq\boldsymbol\Phi^+\cap\boldsymbol\Phi^-v^{-1}=N(v)$,
As $|N_J(v)|=l_J(v)=l(v)=|N(v)|$, it follows that $N_J(v)=N(v)$.
So $\boldsymbol\Psi^+\cap\boldsymbol\Psi^-v^{-1}=\boldsymbol\Phi^+\cap\boldsymbol\Phi^-v^{-1}$,
and $\boldsymbol\Psi^+v\cap\boldsymbol\Psi^-=\boldsymbol\Phi^+v\cap\boldsymbol\Phi^-$.
\par
Now let $\alpha\in\boldsymbol\Phi^+-\boldsymbol\Psi^+$.
Then $\alpha v\in\boldsymbol\Phi^+v$ and $\alpha v\notin\boldsymbol\Psi^+v$.
If $\alpha v\in\boldsymbol\Phi^-$,
then $\alpha v\in\boldsymbol\Phi^+v\cap\boldsymbol\Phi^-=\boldsymbol\Psi^+v\cap\boldsymbol\Psi^-$,
contrary to $\alpha v\notin\boldsymbol\Psi^+ v$.
Hence $\alpha v\notin\boldsymbol\Phi^-$.
As $\alpha\notin\boldsymbol\Psi$, $\alpha v\notin\boldsymbol\Psi$.
So $\alpha v\in\boldsymbol\Phi^+-\boldsymbol\Psi=\boldsymbol\Phi^+-\boldsymbol\Psi^+$.
\end{proof}
\begin{proposition}
\label{prop14}
$N(d_J)v=N(d_J)=\boldsymbol\Phi^+-\boldsymbol\Phi^+_J$ for all $v\in W_J$.
In particular, $N(d)v\subseteq\boldsymbol\Phi^+-\boldsymbol\Phi^+_J$ for all $d\in\mathfrak X_J$ and $v\in W_J$.
\end{proposition}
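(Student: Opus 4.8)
The plan is to anchor everything on the longest element $w_0$ of $W$ together with the factorization $w_0=w_Jd_J$ with $l(w_0)=l(w_J)+l(d_J)$, which holds because $w_J$ and $d_J$ are the longest elements of $W_J$ and $\fk{X}_J$ respectively (comparing $l(w_0)=l(u)+l(d)\LEQ l(w_J)+l(d_J)=l(w_Jd_J)\LEQ l(w_0)$ forces equality throughout). First I would record two facts coming purely from cardinalities via Result~\ref{thm4}: since $N(w_0)\subseteq\boldsymbol\Phi^+$ and $|N(w_0)|=l(w_0)=|\boldsymbol\Phi^+|$, we get $N(w_0)=\boldsymbol\Phi^+$; and running the same argument inside $(W_J,J)$, exactly as in the proof of Proposition~\ref{prop13} where $N_J(v)=N(v)$ for $v\in W_J$, gives $N(w_J)=\boldsymbol\Phi^+_J$.

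Next I would feed the factorization $w_0=w_Jd_J$ (taking $u=w_J\in W_J$ and $d=d_J\in\fk{X}_J$) into Proposition~\ref{prop12}, which yields the disjoint decomposition $N(w_0)=N(w_J)\cup N(d_J)w_J^{-1}$. Substituting $N(w_0)=\boldsymbol\Phi^+$ and $N(w_J)=\boldsymbol\Phi^+_J$ and cancelling the common part $\boldsymbol\Phi^+_J$ gives $N(d_J)w_J^{-1}=\boldsymbol\Phi^+-\boldsymbol\Phi^+_J$.

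The key invariance step comes from Proposition~\ref{prop13}: for every $v\in W_J$ we have $(\boldsymbol\Phi^+-\boldsymbol\Phi^+_J)v\subseteq\boldsymbol\Phi^+-\boldsymbol\Phi^+_J$, and since $v$ acts on $V$ as a bijection while $\boldsymbol\Phi^+-\boldsymbol\Phi^+_J$ is finite, the inclusion must be an equality, so $(\boldsymbol\Phi^+-\boldsymbol\Phi^+_J)v=\boldsymbol\Phi^+-\boldsymbol\Phi^+_J$ for all $v\in W_J$. Applying this with $v=w_J$ (recall $w_J^{-1}=w_J$, as $w_J$ is an involution) to the identity above gives $N(d_J)=(\boldsymbol\Phi^+-\boldsymbol\Phi^+_J)w_J=\boldsymbol\Phi^+-\boldsymbol\Phi^+_J$, and the same invariance immediately delivers $N(d_J)v=N(d_J)$ for all $v\in W_J$, which is the first assertion.

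For the \emph{in particular} clause, I would use that $\fk{X}_J$ is the set of prefixes of $d_J$: each $d\in\fk{X}_J$ is a prefix of $d_J$, so Corollary~\ref{cor5} gives $N(d)\subseteq N(d_J)=\boldsymbol\Phi^+-\boldsymbol\Phi^+_J$, whence $N(d)v\subseteq(\boldsymbol\Phi^+-\boldsymbol\Phi^+_J)v=\boldsymbol\Phi^+-\boldsymbol\Phi^+_J$ by the invariance just established. The only genuinely delicate point is the repeated passage from inclusion to equality: one must know $N(w_J)=\boldsymbol\Phi^+_J$ as an \emph{equality} (not merely a containment) and that each $v$ \emph{permutes} $\boldsymbol\Phi^+-\boldsymbol\Phi^+_J$, so that the cardinality bookkeeping closes up; every other ingredient is a routine substitution into Propositions~\ref{prop12} and~\ref{prop13}.
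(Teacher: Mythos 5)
Your proof is correct and follows essentially the same route as the paper: the factorization $w_S=w_Jd_J$ fed into Proposition~\ref{prop12}, giving $N(d_J)w_J^{-1}=\boldsymbol\Phi^+-\boldsymbol\Phi^+_J$, then the invariance from Proposition~\ref{prop13} upgraded to equality by finiteness to conclude $N(d_J)v=N(d_J)=\boldsymbol\Phi^+-\boldsymbol\Phi^+_J$. The only differences are presentational --- you justify $w_S=w_Jd_J$ by explicit length bookkeeping and spell out the \emph{in particular} clause via Corollary~\ref{cor5}, both of which the paper leaves implicit.
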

\begin{proof}
We denote by $w_S$  the element of maximum length in $W$.
Then $w_S^2=1$ and $N(w_S)=\boldsymbol\Phi^+$ (see~\cite[p.~27]{GPf00}).
\par
Continuing with the notation of Proposition~\ref{prop13}, we also get
$N(w_J)=\boldsymbol\Psi^+$.
We can write $w_S=w_Jd_J$.
By Proposition~\ref{prop12},
$\boldsymbol\Phi^+=N(w_S)=N(w_Jd_J)=N(w_J)\cup N(d_J)w_J^{-1}$
and $N(w_J)\cap N(d_J)w_J^{-1}=\varnothing$.
So $N(d_J)w_J^{-1}=\boldsymbol\Phi^+-\boldsymbol\Psi^+$.
By Proposition~\ref{prop13},
$(\boldsymbol\Phi^+-\boldsymbol\Psi^+)w_J\subseteq\boldsymbol\Phi^+-\boldsymbol\Psi^+$.
Hence, $N(d_J)\subseteq\boldsymbol\Phi^+-\boldsymbol\Psi^+$.
Comparing the sizes of these sets, we get
$N(d_J)=\boldsymbol\Phi^+-\boldsymbol\Psi^+$.
\par
Now let $v\in W_J$.
From Proposition~\ref{prop13},
$N(d_J)v=(\boldsymbol\Phi^+-\boldsymbol\Psi^+)v\subseteq\boldsymbol\Phi^+-\boldsymbol\Psi^+=N(d_J)$.
Again comparing sizes, we get $N(d_J)v=N(d_J)$.
\end{proof}

\begin{corollary}\label{cor_3.11} Suppose that $d\in\mathfrak X_J$ and $u_1, u_2\in W_J$ with $u_1$ a prefix of $u_2$.
Then $u_1d$ is a prefix of $u_2d_J$.
\end{corollary}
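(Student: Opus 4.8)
The plan is to reduce everything to the characterization of the prefix relation in terms of $N$-sets supplied by Proposition~\ref{cor8}: to prove that $u_1d$ is a prefix of $u_2d_J$ it suffices to establish the single inclusion $N(u_1d)\subseteq N(u_2d_J)$, after which Proposition~\ref{cor8} converts it back into the desired prefix relation.

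First I would decompose both $N$-sets using Proposition~\ref{prop12}. Since $u_1\in W_J$ and $d\in\mathfrak X_J$, we have $N(u_1d)=N(u_1)\cup N(d)u_1^{-1}$; and since $u_2\in W_J$ while $d_J\in\mathfrak X_J$ (being the longest element of $\mathfrak X_J$), we have $N(u_2d_J)=N(u_2)\cup N(d_J)u_2^{-1}$. This splits each side into a ``$W_J$-part'' and a ``coset-part'', which I then match up separately.

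For the $W_J$-parts, the hypothesis that $u_1$ is a prefix of $u_2$ gives $N(u_1)\subseteq N(u_2)$ by Corollary~\ref{cor5}. For the coset-parts, Proposition~\ref{prop14} does the work: since $u_1^{-1}\in W_J$ and $d\in\mathfrak X_J$ it yields $N(d)u_1^{-1}\subseteq\boldsymbol\Phi^+-\boldsymbol\Phi^+_J$, while applying it to $d_J$ and $u_2^{-1}\in W_J$ gives $N(d_J)u_2^{-1}=\boldsymbol\Phi^+-\boldsymbol\Phi^+_J$. Combining the two, $N(u_1d)=N(u_1)\cup N(d)u_1^{-1}\subseteq N(u_2)\cup(\boldsymbol\Phi^+-\boldsymbol\Phi^+_J)=N(u_2d_J)$, as required.

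I do not expect a genuine obstacle, since the statement follows by assembling results already proved; the only point requiring a little care is the asymmetry of the two factors. The second element on the right is forced to be $d_J$ rather than an arbitrary element of $\mathfrak X_J$, and this is exactly what makes Proposition~\ref{prop14} collapse the coset-part $N(d_J)u_2^{-1}$ into the \emph{full} set $\boldsymbol\Phi^+-\boldsymbol\Phi^+_J$; that full set then absorbs $N(d)u_1^{-1}$ for \emph{every} $d\in\mathfrak X_J$, which is what lets the argument go through uniformly in $d$.
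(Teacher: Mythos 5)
Your proposal is correct and follows essentially the same route as the paper's proof: decompose $N(u_1d)$ and $N(u_2d_J)$ via Proposition~\ref{prop12}, use the prefix hypothesis to get $N(u_1)\subseteq N(u_2)$, collapse $N(d_J)u_2^{-1}$ to $\boldsymbol\Phi^+-\boldsymbol\Phi_J^+$ and absorb $N(d)u_1^{-1}$ into it via Proposition~\ref{prop14}, then conclude with Proposition~\ref{cor8}. The only cosmetic difference is that you invoke the ``in particular'' clause of Proposition~\ref{prop14} directly where the paper re-derives that inclusion from Propositions~\ref{cor8} and~\ref{prop13}.
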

\begin{proof}
From Proposition~\ref{prop12}, $N(u_2d_J)=N(u_2)\cup N(d_J)u_2^{-1}$.
Hence $N(u_2d_J)=N(u_2)\cup(\boldsymbol\Phi^+-\boldsymbol\Phi_J^+)$ by Proposition~\ref{prop14}.
Again, by Proposition~\ref{prop12}, $N(u_1d)=N(u_1)\cup N(d)u_1^{-1}$.
But $N(u_1)\subseteq N(u_2)$ and $N(d)\subseteq N(d_J)$ from Proposition~\ref{cor8}.
In view of Propositions~\ref{cor8} and~\ref{prop13}, it follows that $N(d)u_1^{-1}\subseteq N(d_J)u_1^{-1}=\boldsymbol\Phi^+-\boldsymbol\Phi_J^+$.
This leads to $N(u_1d)=N(u_1)\cup N(d)u_1^{-1}\subseteq N(u_2)\cup(\boldsymbol\Phi^+-\boldsymbol\Phi_J^+)=N(u_2d_J)$.
The desired result is now immediate from Proposition~\ref{cor8}.
\end{proof}

\begin{result}[{Compare with \cite[Lemma~9.1]{HN12}}]
\label{prop9}
Let $x=ud\in W$ with $u\in W_J$ and $d\in\fk{X}_J$,
and let $x'=u'd'$ be a prefix of $x$ with $u'\in W_J$ and
$d'\in\fk{X}_J$.
Then $u'$ is a prefix of $u$ and $d'$ is a prefix of $d$.
In particular, if $Z$ is a right ideal in $W_J$, then $Z\fk{X}_J$ is a right ideal in $W$.
\end{result}

\begin{remark}
\label{rem11}
The converse of Result~\ref{prop9} is false.
Let $W=S_3=\langle S\rangle$ where $S=\{s_1,s_2\}$ with
$s_1=(1,2)$ and $s_2=(2,3)$.
Let $J=\{s_1\}$.
Then $W_J=\{1,s_1\}$ and $\fk{X}_J=\{1,s_2,s_2s_1\}$.
Consider $x=s_1s_2$.
Then $u=s_1$ and $d=s_2$.
Let $x'=s_2=u'd'$ where $u'=1$ and $d'=s_2$.
Then $u'$ is a prefix of $u$ and $d'$ is a prefix of $d$.
However, $(x')^{-1}x=s_2s_1s_2$ has length 3.
So $x'$ is not a prefix of $x$.
In Corollary~\ref{cor_3.11} we have seen that the converse of Result~\ref{prop9} is true in the special case $d=d_J$.
\end{remark}

\begin{proposition}\label{prop:3.12}
Let $Y$ be the minimal determining set of the right ideal $Z$ of $W_J$.
Then $Yd_J=\{xd_J\colon x\in Y\}$ is the minimal determining set of the right ideal $Z\mathfrak X_J$ of $W$.
\end{proposition}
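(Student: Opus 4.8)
The plan is to show that $Yd_J$ is a minimal determining set for $Z\mathfrak X_J$ by establishing two things: that every element of $Z\mathfrak X_J$ is a prefix of some element of $Yd_J$, and that no element of $Yd_J$ is a prefix of another. I would begin by recalling from Result~\ref{prop9} that $Z\mathfrak X_J$ is indeed a right ideal in $W$, so the notion of minimal determining set applies. The key structural input is Corollary~\ref{cor_3.11}, which supplies exactly the direction that the general converse of Result~\ref{prop9} fails to provide: if $u_1$ is a prefix of $u_2$ in $W_J$ and $d\in\mathfrak X_J$, then $u_1d$ is a prefix of $u_2d_J$.

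First I would verify that $Yd_J\subseteq Z\mathfrak X_J$, which is immediate since $Y\subseteq Z$ and $d_J\in\mathfrak X_J$. Next, for the generating (covering) direction, take an arbitrary $x=ud\in Z\mathfrak X_J$ with $u\in Z$ and $d\in\mathfrak X_J$. Since $Y$ is the minimal determining set for $Z$, the element $u$ is a prefix of some $x_0\in Y$. Applying Corollary~\ref{cor_3.11} with $u_1=u$, $u_2=x_0$, and the given $d$, I conclude that $ud$ is a prefix of $x_0d_J\in Yd_J$. Thus every element of $Z\mathfrak X_J$ is a prefix of an element of $Yd_J$, so taking all prefixes of $Yd_J$ recovers the whole ideal.

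The remaining and more delicate task is to show that $Yd_J$ is an antichain under the prefix order, i.e.\ that distinct elements $x_1d_J, x_2d_J$ (with $x_1,x_2\in Y$) are prefix-incomparable. Here I would argue by contradiction using the $N$-set characterisation of Proposition~\ref{cor8}: suppose $x_1d_J$ is a prefix of $x_2d_J$, so $N(x_1d_J)\subseteq N(x_2d_J)$. By Proposition~\ref{prop12} and Proposition~\ref{prop14}, $N(x_id_J)=N(x_i)\cup(\boldsymbol\Phi^+-\boldsymbol\Phi^+_J)$ for $i=1,2$. Since $N(x_i)\subseteq\boldsymbol\Psi^+=\boldsymbol\Phi^+_J$ (because $x_i\in W_J$), the two pieces $N(x_i)$ and $\boldsymbol\Phi^+-\boldsymbol\Phi^+_J$ are disjoint, so intersecting the inclusion $N(x_1)\cup(\boldsymbol\Phi^+-\boldsymbol\Phi^+_J)\subseteq N(x_2)\cup(\boldsymbol\Phi^+-\boldsymbol\Phi^+_J)$ with $\boldsymbol\Phi^+_J$ yields $N(x_1)\subseteq N(x_2)$. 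By Proposition~\ref{cor8} again (applied within $W$, but $N=N_J$ on $W_J$ by the argument in Proposition~\ref{prop13}), this forces $x_1$ to be a prefix of $x_2$, contradicting that $Y$ is an antichain unless $x_1=x_2$.

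The main obstacle I anticipate is the antichain step, specifically the bookkeeping needed to pass cleanly between the ambient $N$-sets in $W$ and the parabolic $N_J$-sets in $W_J$; the disjointness $N(x_i)\cap(\boldsymbol\Phi^+-\boldsymbol\Phi^+_J)=\varnothing$ and the identification $N_J(x_i)=N(x_i)$ from Proposition~\ref{prop13} are exactly what make the intersection argument go through, and I would make sure to invoke these explicitly rather than treating the reduction $N(x_1d_J)\subseteq N(x_2d_J)\Rightarrow N(x_1)\subseteq N(x_2)$ as automatic.
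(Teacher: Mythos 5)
Your proof is correct and follows essentially the same route as the paper's: the covering step uses Corollary~\ref{cor_3.11} exactly as the paper does, and the antichain step uses Proposition~\ref{cor8} together with the decomposition $N(x_id_J)=N(x_i)\cup(\boldsymbol\Phi^+-\boldsymbol\Phi^+_J)$ from Propositions~\ref{prop12} and~\ref{prop14}. The only (harmless) difference is that you re-derive the disjointness of the two pieces via $N(x_i)\subseteq\boldsymbol\Phi^+_J$, whereas the paper reads it off directly from the disjoint-union statement of Proposition~\ref{prop12}.
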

\begin{proof}
Let $t\in Z\mathfrak X_J$.
Then $t=zd$ for some $z\in Z$ and $d\in\mathfrak X_J$, with $z$ a prefix of $\hat x$ for some $\hat x\in Y$.
By Corollary~\ref{cor_3.11}, $t=zd$ is a prefix of $\hat xd_J$.
It follows that the set $Yd_J$ contains the minimal determining set of $Z\mathfrak X_J$.
In order to complete the proof it is enough to establish that $x_1d_J$ is not a prefix of $x_2d_J$ whenever $x_1,x_2\in Y$ ($x_1\ne x_2$).
Suppose, on the contrary, that $x_1$, $x_2$ are distinct elements of $Y$ and $x_1d_J$ is a prefix of $x_2d_J$.
By Proposition~\ref{cor8}, $N(x_1d_J)\subseteq N(x_2d_J)$.
But $N(x_1d_J)$ (resp., $N(x_2d_J)$) is the disjoint union of $N(x_1)$ (resp., $N(x_2)$) and $(\boldsymbol\Phi^+-\boldsymbol\Phi_J^+)$ in view of Propositions~\ref{prop12} and~\ref{prop14}.
It follows that $N(x_1)\subseteq N(x_2)$, that is, $x_1$ is a prefix of $x_2$, which is the desired contradiction.
\end{proof}

\section{Symmetric group background}\label{subsec_sym_gr}

For the rest of this paper we  focus on the symmetric group.
For the basic definitions and background concerning partitions, compositions, Young diagrams, Young tableaux and the  Robinson-Schensted
correspondence we refer to~\cite{Sagan}.

The symmetric group $S_n$ (acting on the right) on $\{1,\dots,n\}$ is a Coxeter group with Coxeter system
$(W,S)$ where $W=S_n$, $S=\{s_1,\ldots,s_{n-1}\}$, and $s_i$ is the
transposition $(i,i+1)$.

All our partitions and compositions will be assumed to be \emph{proper}
(that is, with no zero parts).
We use the notation $\lambda\vDash n$ (respectively, $\lambda\vdash n$)
to say that $\lambda$ is a composition (respectively, partition) of $n$.
If $\nu,\mu\vdash n$ with $\nu=(\nu_1, \ldots , \nu_r)$ and $\mu=(\mu_1,\ldots,\mu_s)$, write
$\nu\unlhd\mu$ if $r\GEQ s$ and
$\sum_{1\LEQ i\LEQ k}\nu_i \LEQ \sum_{1\LEQ i\LEQ k}\mu_i$, for all $k$ with $1\LEQ k\LEQ s$.
This is the dominance order of partitions (see~\cite[p.~58]{Sagan}).

Let $\lambda=(\lambda_1, \ldots , \lambda_r)$ be a \textit{composition}
of $n$ with $r$ \emph{parts}.
Recall that the \emph{conjugate} composition
$\lambda'=(\lambda_1',\ldots,\lambda_{r'}')$ of $\lambda$ is defined by
$\lambda'_i=\left|\{j\Colon 1\LEQ j\LEQ r\mbox{ and }
i\LEQ\lambda_j\}\right|$ for $1\LEQ i\LEQ r'$, where $r'$ is the maximum part of the composition $\lambda$.
It is immediate that $\lambda'$ is a partition of $n$ with $r'$ parts.
We also define the subset
$J(\lambda)$
\label{def:j_lam}
of $S$ to be
$S\backslash \{s_{\lambda_1},s_{\lambda_1+\lambda_2},\ldots,
s_{\lambda_1+\ldots+\lambda_{r-1}}\}$.
Thus, corresponding to the composition $\lambda$, there is a standard
parabolic subgroup of $W$, also known as a Young subgroup, whose Coxeter generator set is $J(\lambda)$.

It was shown in~\cite{KLu79} that in the case of the symmetric group $S_n$, the Robinson-Schensted
correspondence gives a combinatorial method of identifying the
Kazhdan-Lusztig cells. 
In describing the connection
between the Kazhdan-Lusztig left and right cells of $S_n$ and the
tableaux arising from the Robinson-Schensted process one needs to be careful since this is
affected by how the elements of the (abstract) Coxeter group act
on the set $\{1,\ldots,n\}$, whether on the right or on the left.

At this point we recall briefly the generalizations of the notions of diagram and tableau,
commonly used in the basic theory, see~\cite{MPa15} for a more detailed description.
A \emph{diagram} $D$ is a non-empty finite subset of $\mathbb{Z}^2$.
We will assume  that $D$ has no empty rows or columns.
These are the principal diagrams of \cite{MPa15}.
We will also assume that both rows and columns of $D$ are indexed
consecutively from 1; a node in $D$ will be given coordinates $(a,b)$ where $a$ and $b$ are the indices respectively of the row and column which the node belongs to
(rows are indexed from top to bottom and columns from left to right).
The \emph{row-composition} $\lambda_D$ (respectively,
\emph{column-composition} $\mu_D$) of $D$ is defined by
setting $\lambda_{D,k}$ (respectively, $\mu_{D,k}$) to be the number of nodes on
the $k$-th row (respectively, column) of $D$.
If $\lambda$ and $\mu$ are compositions of $n$,
we will write $\mathcal{D}^{(\lambda,\mu)}$ for the set
of (principal) diagrams $D$ with $\lambda_D=\lambda$ and $\mu_D=\mu$.
We also define $\mathcal{D}^{(\lambda)}=\bigcup_{\mu\vDash n}\mathcal{D}^{(\lambda,\mu)}$.
If $\nu\vdash n$, the \emph{Young diagram} associated with $\nu$ is the unique element of $\mathcal D^{(\nu,\nu')}$.
A \emph{special diagram} is a diagram obtained from a Young diagram
by permuting the rows and columns
(see~\cite[Proposition~3.1]{MPa15} for a characterization of special diagrams).

We say that a diagram $D$ has \emph{size} $n$ if it consists of precisely $n$ nodes.
We also define the \emph{length of a column} of a diagram $D$ to be the number of nodes $D$ has on this column.
If $D$ has exactly $m$ columns, we set $\alpha_D=(\alpha_1,\ldots,\alpha_m)$ where $\alpha_i$ equals the length of column $i$ of $D$, for $1\LEQ i\LEQ m$.
We call the $m$-tuple $\alpha_D$ the \emph{tuple of column-lengths} of~$D$.

If $D$ is a diagram of size $n$,
a \emph{$D$-tableau} is a bijection
$t\Colon D\rightarrow \{1,\ldots,n\}$ and
we refer to $(i,j)t$, where $(i,j)\in D$, as the
$(i,j)$-\emph{entry} of~$t$.
The group $W$ acts on the set of $D$-tableaux in the obvious
way---if $w\in W$,
an entry $i$ is replaced by $iw$ and $tw$ denotes the tableau
resulting from the action of $w$ on the tableau $t$.
We denote by $t^{D}$ and  $t_{D}$ the two $D$-tableaux obtained by
filling the nodes of $D$ with $1,\ldots,n$ by rows and by columns,
respectively, and we write $w_{D}$ for the element of $W$ defined by
$t^{D}w_{D}=t_{D}$.

Now let $D$ be a diagram and let $t$ be a \emph{$D$-tableau}.
We say $t$ is \emph{row-standard} if it is increasing on rows.
Similarly, we say $t$ is \emph{column-standard} if it is increasing
on columns.
We say that $t$ is \emph{standard} if $(i',j')t\LEQ (i'',j'')t$
for any $(i',j'),(i'',j'')\in D$ with $i'\LEQ i''$ and $j'\LEQ j''$.
Clearly a standard $D$-tableau is row-standard and column-standard,
however the converse is not true, in general.

The following result will turn out to be useful in various arguments in Sections~\ref{sec:3a} and~\ref{sec5}.

\begin{result}[{\cite[Proposition 3.5]{MPa15}.
See also \cite[Section~2]{MPa21}.
Compare \cite[Lemma 1.5]{DJa86}}]
\label{res:3a}
Let $D$ be a diagram.
Then the mapping $u\mapsto t^{D}u$ is a bijection of the set
of prefixes of $w_{D}$ to the set of standard
$D$-tableaux.
\proofblock
\end{result}

Since $\mathfrak{X}_{J(\lambda_D)}=\{w\in S_n\colon t^Dw$ is row-standard$\}$, see~\cite[Lemma~1.1]{DJa86}, it follows that $w_D$ and all its prefixes belong to $\mathfrak{X}_{J(\lambda_D)}$.

In general, an element of $W$ has an expression of the form
$w_D$ for many different diagrams $D$ of size $n$.
If $\lambda\vDash n$ and $d\in \mathfrak{X}_{J(\lambda)}$, a way to locate suitable diagrams $D\in\mathcal{D}^{(\lambda)}$  with $d=w_D$ is given in~{\cite[Proposition~3.7]{MPa15}}.
The proof involves the construction of a very particular diagram $D=D(d,\lambda)\in\mathcal D^{(\lambda)}$ with $w_D=d$.
Moreover, in~\cite[Proposition~3.8]{MPa15} it is shown that among all diagrams $E\in\mathcal D^{(\lambda)}$ with $w_E=d$, diagram $D(d,\lambda)$ is the unique one with the minimum number of columns.

As in~\cite{MPa17}, for a composition $\lambda$ of $n$, we define the following subsets
of $\mathfrak{X}_{J(\lambda)}$ and $\mathcal{D}^{(\lambda)}$:
\begin{equation*}
\renewcommand{\arraystretch}{1.0}
\label{eqn:1a}
\begin{array}{rcl}
Z(\lambda) & = &
\{e\in\mathfrak{X}_{J(\lambda)}\colon w_{J(\lambda)}e\sim_Rw_{J(\lambda)}\},
\\
Z_s(\lambda) & = &
\{e\in Z(\lambda)\colon e=w_D
\mbox{ for some special diagram } D\in\mathcal{D}^{(\lambda)}\},
\\
Y(\lambda) & = &
\{x\in Z(\lambda)\colon x \mbox{ is not a prefix of any other }
y\in Z(\lambda)\},
\\
Y_s(\lambda) & = &
Y(\lambda)\cap Z_s(\lambda)=\{y\in Y(\lambda)\colon D(y,\lambda)\mbox{ is special}\},
\\
\mathcal{E}^{(\lambda)} & = &
\{D(y,\lambda)\colon y\in Y(\lambda)\}\ \mbox{ and }\
\mathcal{E}_s^{(\lambda)}=\{D\in\mathcal{E}^{(\lambda)}\colon D \mbox{ is special}\}.
\end{array}
\end{equation*}
\par
As we have already seen in Section~\ref{sec:2c}, $Z(\lambda)$ is a right ideal in $W$.
Moreover, the set
$w_{J(\lambda)}Z(\lambda)$ is the
right cell of $W$ containing $w_{J(\lambda)}$.
We denote this right cell by $\mathfrak{C}(\lambda)$.
The set $Y(\lambda)$ is the minimal determining set of the right ideal $Z(\lambda)$.
We also call $Y(\lambda)$ the \emph{rim} of the cell
$\mathfrak{C}(\lambda)$.
The map $y\mapsto D(y,\lambda)$ from $Y(\lambda)$ to $\mathcal{E}^{(\lambda)}$ is a bijection, so $Y(\lambda)=\{w_D\colon D\in\mathcal{E}^{(\lambda)}\}$.
Hence, in order to give an explicit description of $Y(\lambda)$ or $\mathfrak{C}(\lambda)$ it is enough to locate the diagrams in $\mathcal{E}^{(\lambda)}$.

The work in~\cite{Schensted1961} and~\cite{Greene1974} motivates the following definition.

\begin{definition}[{Compare with~\cite[Lemma~3.2]{MPa17}, the definition before Remark~3.3
in \cite{MPa17}, and~\cite[Definition~3.6]{MPa21}}]
\label{def:2.1a}
Let $D$ be a diagram of size $n$.

\vspace{-3ex}
\begin{enumerate}[(i)]
\item
A \emph{path} of \emph{length} $m$ in $D$ is a non-empty sequence of
nodes $((a_i,b_i))_{i=1}^m$ of $D$ such that $a_i<a_{i+1}$ and
$b_i\LEQ b_{i+1}$ for $i=1,\ldots,m-1$.
\vspace{-1.5ex}
\item
For $k\in\mathbb{N}$, a \emph{$k$-path} in $D$ is a sequence of $k$
mutually disjoint paths in $D$;
the paths in this sequence are the \emph{constituent paths} of the
$k$-path.
The \emph{length} of a $k$-path is the sum of the lengths of its
constituent paths; this is the total number of nodes in the $k$-path.
The \emph{type} of a $k$-path is the sequence of lengths of its paths
in non-strictly decreasing order---in particular, the type of a
$k$-path is a $k$-part partition.
The \emph{support} of a $k$-path $\Pi$, which we denote by $s(\Pi)$,
is the set of nodes occurring in its paths.
\vspace{-1.5ex}
\item
Let $\Pi$ be a $k$-path in  $D$ and let $k'\LEQ k$.
A \emph{$k'$-subpath} of $\Pi$ is a $k'$-path in $D$ whose constituent
paths are also constituent paths of $\Pi$.
\vspace{-1.5ex}
\item
Let $\Pi=(\pi_1,\dots,\pi_k)$ be a $k$-path in $D$
where $\pi_j=((a_{i,j},b_{i,j}))_{i=1}^{m_j}$, for
$1\LEQ j\LEQ k$.
$\Pi$ is said to be \emph{ordered} if whenever $j,j'\in\{1,\ldots,k\}$
with $j<j'$ and $(a_{i,j},b_{i,j})$ and $(a_{i',j'},b_{i',j'})$ are
nodes of $\pi_j$ and $\pi_{j'}$, respectively, with
$a_{i,j}\LEQ a_{i',j'}$, then $b_{i,j}<b_{i',j'}$.
\vspace{-1.5ex}
\item
A $k$-path and a $k'$-path in $D$ are said to be \emph{equivalent} to one
another if they have the same support. 
\vspace{-1.5ex}
\item
The diagram $D$ is said to be of \emph{subsequence type}
$\nu$, where $\nu=(\nu_1,\ldots,\nu_r)\vdash n$,
if the maximum length of a $k$-path in $D$ is
$\nu_1+\ldots+\nu_k$ whenever $1\LEQ k\LEQ r$.
We call $D$ \emph{admissible} if it is of subsequence
type $\lambda_D'$.
\end{enumerate}
\end{definition}

Below we collect some results in~\cite{MPa17} and~\cite{MPa21} about paths and admissible diagrams which will play some part in Sections~\ref{sec:3a} and~\ref{sec5}.

\begin{result}[{See \cite[Propositions 3.5 and~3.6~and~Corollary~3.7]{MPa17}}]
\label{res:8a}
Let $D$ be a diagram of size $n$ and let $\nu$ be a partition of $n$.

\vspace{-3ex}
\begin{enumerate}[(i)]
\item If $D$ is of subsequence type $\nu$ then
$\mu_D''\unlhd\nu\unlhd\lambda_D'$.

\item
We have $w_{J(\lambda_D)}w_D\sim_R w_{J(\lambda_D)}$
if, and only if, $D$ is admissible. 
\item
If $D=s(\Pi)$ for some $k$-path $\Pi$ in $D$ of type $\lambda'_D$, then $D$ is admissible.
In particular, if $D$ is a special diagram then
$D$ is admissible.
\end{enumerate}
\end{result}
Note, however, that for composition $\lambda$ it is not true in general that every admissible diagram $E\in\mathcal D^{(\lambda)}$ is the support of some $k$-path in $D$ of type $\lambda'$ ---
consider for example the diagram
{\tiny$\begin{array}{ll}\times&\times\\ \times& \\ &\times \\ \times&\times\end{array}$} in $\mathcal D^{((2,1,1,2))}$.

\begin{result}[{\cite[Theorem~3.13]{MPa21}}]
\label{thm:2.14a}
Let $k\GEQ1$ and suppose $\Pi$ is a $k$-path in a diagram $D$.
Then $\Pi$ is equivalent to an ordered $k$-path in $D$.
\end{result}

\begin{result}[{\cite[Corollary~3.16]{MPa21}}]
\label{cor:3.4a}
Let $\Pi=(\pi_1,\ldots,\pi_k)$ be an ordered $k$-path in a diagram
$D$, and let $(a_i',b_i')$, $1\LEQ i\LEQ l$, be $l$ distinct nodes of
$D$ which is not in $\Pi$.
If no path $\pi_j$, $1\LEQ j\LEQ k$, contains a pair of nodes of the
form $(a_{i,j,1},b_i')$, $(a_{i,j,2},b_i')$ with
$a_{i,j,1}<a_i'<a_{i,j,2}$ for any $i$ satisfying $1\LEQ i\LEQ l$,
then the paths $((a_i',b_i'))$ may be inserted into the sequence $\Pi$
to give an ordered $(k+l)$-path.
\end{result}

Finally for this section we recall two results from~\cite{MPa17} which relate the sets $\mathcal E^{(\lambda)}$ and $\mathcal E^{(\mu)}$ when composition $\mu$ is obtained from composition $\lambda$ in some particular ways.

The \emph{reverse} composition $\dot\lambda$ of a composition
$\lambda=(\lambda_1$, $\dots$, $\lambda_r)$ of $n$ is the composition
$(\lambda_r$, $\dots$, $\lambda_1)$ of $n$ obtained by reversing the order of
the entries.
For a  diagram $D\in\mathcal{D}^{(\lambda)}$,
the diagram $\dot D\in\mathcal{D}^{(\dot\lambda)}$ is the diagram
obtained by rotating $D$ through $180^{\circ}$.
If $D\in\mathcal{D}^{(\lambda,\mu)}$,
then $\dot D\in\mathcal{D}^{(\dot\lambda,\dot\mu)}$.

\begin{result}[{\cite[Proposition~3.9]{MPa17} and \cite[Remark~2.9]{MPa21}}]
\label{res:10a}
Let $\lambda,\mu\vDash n$.
The map $D\mapsto \dot D$ from $\mathcal D^{(\lambda,\mu)}$ to $\mathcal D^{(\dot\lambda,\dot\mu)}$ induces a bijection between the sets $\mathcal E^{(\lambda)}$ and $\mathcal E^{(\dot\lambda)}$.
\end{result}

Given a composition $\lambda=(\lambda_1,\ldots,\lambda_r)\vDash n$,
let $\lambda_{*}=(\lambda_1,\ldots,\lambda_r,1)\vDash n+1$.
In \cite[Section~4]{MPa17}, there is a well-defined mapping $\psi$ from
the set of admissible diagrams in $\mathcal{D}^{(\lambda)}$ to the set
of admissible diagrams in $\mathcal{D}^{(\lambda_*)}$.
For a given admissible diagram $D$ in $\mathcal{D}^{(\lambda)}$, the
diagram $D\psi$ is obtained by examining all diagrams constructed from
$D$ by appending an $(r+1)$-th row with a single node to $D$ and
selecting the diagram which is admissible and such that the column
of the new node is minimal.

\begin{result}[{\cite[Proposition~4.2]{MPa21}}]
\label{prop:3.2a}
Let $r\GEQ 2$, let $n\GEQ 2$ and
let $\lambda=(\lambda_1,\ldots,\lambda_r)\vDash n$
be an $r$-part composition with $\lambda_r=1$.
Let $\psi$ be the mapping described in \cite[Section~4]{MPa17}.
Then $\psi$ induces a bijection from $\mathcal E^{(\lambda)}$ to $\mathcal E^{(\lambda_*)}$.
\end{result}

\end{section}

\begin{section}
{Ordered $k$-path structure of admissible diagrams}
\label{sec:3a}

Most of the work in this section is concerned with the identification and investigation of certain key ordered $k$-paths which have as their support an admissible diagram $D\in\mathcal D^{(\lambda)}$.
Later on in the paper we show how these particular ordered $k$-paths lead to the determination of the set $\mathcal E^{(\lambda)}$ and thus to the determination of the rim of the Kazhdan-Lusztig cell $\mathfrak C(\lambda)$ (or, equivalently, to the determination of the minimal determining set of the $W$-graph ideal $Z(\lambda)$).
Some motivation in taking this approach is given by the proof of~\cite[Theorem~4.6]{MPa21} as one of its main ingredients is that, in the case $\lambda$ is a 3-part composition, any admissible diagram in $\mathcal D^{(\lambda)}$ is the support of an ordered $k$-path of type $\lambda'$.

Next, we focus on compositions $\lambda$ of the form $(\lambda_1,\lambda_2,\lambda_3,1^r)$.
We begin by fixing some notation.

\textbf{Hypothesis~(*):}
Let $s\GEQ t\GEQ u\GEQ 1$.
We say that the composition $\lambda$ satisfies Hypothesis~(*) if $\lambda=(\lambda_1,\lambda_2,\lambda_3,1)$ is a composition of $s+t+u+1$ where
$\tilde\lambda=(\lambda_1,\lambda_2,\lambda_3)$ is a permutation of
$(s,t,u)$.

We  continue with a study of the ordered $k$-paths in  an admissible diagram $D\in\mathcal{D}^{(\lambda)}$ where $\lambda$ is a composition satisfying
Hypothesis~(*).
Clearly, these diagrams have no paths of length greater than~4.
If $\Pi$ is $k$-path in $D$, we let $z_i(\Pi)$ be the number of
constituent paths in $\Pi$ of length $i$ for $1\LEQ i\LEQ 4$.
We make the following technical definition of two forms of ordered $k$-path
in the diagram $D$.
We justify this definition in Lemma~\ref{lem:3.12a}.
\begin{definition}
\label{def:3.9a}
Suppose that composition $\lambda$ satisfies Hypothesis $(*)$ and that $D\in\mathcal D^{(\lambda)}$ is an admissible diagram.
An ordered $s$-path $\Pi$ in $D$ of length $s+t+u+1$ which contains
a $t$-subpath of length $2t+u+1$ is said to be a \emph{form-A}
$s$-path if $z_1(\Pi)=s-t$, $z_2(\Pi)=t-u$, $z_3(\Pi)=u-1$, and
$z_4(\Pi)=1$ and a \emph{form-B} $s$-path if
$z_1(\Pi)=s-t$, $z_2(\Pi)=t-u-1$, $z_3(\Pi)=u+1$, and $z_4(\Pi)=0$.
\end{definition}

\begin{remark}\label{Remark:ordered_subpaths}
Under the hypothesis and notation of Definition~\ref{def:3.9a} we can make the following observations.\\
(i) If the constituents of the $t$-subpath in Definition~\ref{def:3.9a} are
listed in the same order as they appear in the $s$-path then the
$t$-path is also ordered.\\
(ii) Any form-A $s$-path in $D$ has type $\lambda'$.
In particular, if $D$ has a form-A $s$-path then $D$ is admissible (see Result~\ref{res:8a}(iii)).\\
(iii) If $D$ has a form-B $s$-path then $t>u$.
\end{remark}

It is possible for an admissible diagram $D\in\mathcal D^{(\lambda)}$, with $\lambda$ satisfying Hypothesis~(*), to have both
form-A and form-B $s$-paths as the following example shows.
\begin{example}
\label{ex:3.11a}

Let  $D\in\mathcal D^{((4,6,3,1))}$ be the diagram
$\begin{array}{*{8}{c}}
        & \times & \times &        &        & \times & \times &        \\
 \times &        & \times & \times &        & \times & \times & \times \\
        &        &        &        & \times & \times &        & \times \\
        &        &        &        &        & \times &        &        \\
\end{array}$.
The 6-paths $\Pi_1,\Pi_2\in D$ where
$\Pi_1=\{$
$\{$
$\{(2,1)\},$
$\{(1,2),$ $(2,3),$ $(3,5),$ $(4,6)\},$
$\{(1,3),$ $(2,4),$ $(3,6)\},$
$\{(2,6)\},$
$\{(1,6),$ $(2,7),$ $(3,8)\},$
$\{(1,7),$ $(2,8)\}$
$\}$
and
$\Pi_2=\{$
$\{$
$\{(2,1)\},$
$\{(1,2),$ $(2,3),$ $(4,6)\},$
$\{(1,3),$ $(2,4),$ $(3,5)\},$
$\{(1,6),$ $(2,6),$ $(3,6)\},$
$\{(1,7),$ $(2,7),$ $(3,8)\},$
$\{(2,8)\}$$\}$
are form-A and form-B, respectively.
\end{example}

In the next two lemmas, which will play an important part in the discussion that follows, we investigate the existence of form-A or form-B $s$-paths in admissible diagrams $D\in\mathcal D^{(\lambda)}$ with $\lambda$ satisfying Hypothesis~(*).

\begin{lemma}
\label{lem:3.12a}
Assume that composition $\lambda$ satisfies Hypothesis $(*)$ and that $D\in\mathcal D^{(\lambda)}$ is an admissible diagram.
Then $D$ is the support of an ordered $s$-path ${\Pi}$ of length $s+t+u+1$ which is either
a form-A $s$-path or a form-B $s$-path.
Moreover,
\\[1ex]
\begin{tabular}{rp{14.5cm}}
(i) & if there are paths in $\Pi$ of length 1 then $s>t$ and all nodes
occurring in such paths are in a row of $D$ of length $s$,
\\[1ex]
(ii) & if there are paths in $\Pi$ of length 2 then $t>u$ and all
nodes occurring in such paths are in the rows of $D$ of lengths $s$ and
$t$,
\\[1ex]
(iii) & all nodes occurring in paths of length 3 are in the first three
rows of $D$, except in the case that $\Pi$ is a form-B $s$-path, when
one of these paths has its nodes on the rows of lengths $s$ and $t$
and on the fourth row of $D$.
\end{tabular}
\end{lemma}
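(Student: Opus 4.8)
The plan is to build the required $s$-path by taking a maximal $t$-subpath and attaching the leftover nodes as singletons. Recall that here the conjugate is $\lambda'=(4,3^{u-1},2^{t-u},1^{s-t})$, a partition of $n=s+t+u+1$ with exactly $s$ parts. Since $D$ is admissible it has subsequence type $\lambda'$, so the maximum length of a $t$-path in $D$ is $\nu_1+\cdots+\nu_t=2t+u+1$; such a $t$-path exists, and by Result~\ref{thm:2.14a} I may take it to be an ordered $t$-path $\Sigma$. No constituent path of $\Sigma$ is a singleton, for deleting one would leave a $(t-1)$-path of length $2t+u>\nu_1+\cdots+\nu_{t-1}=2t+u-1$, violating the subsequence type. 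Hence every constituent path of $\Sigma$ has length between $2$ and $4$ (there are only four rows), and at most one has length $4$ (two length-$4$ paths would give a $2$-path of length $8>\nu_1+\nu_2$). Writing $z_i'$ for the length counts of $\Sigma$, the relations $\sum z_i'=t$ and $\sum i\,z_i'=2t+u+1$ (with $z_1'=0$) give $z_3'+2z_4'=u+1$, so either $(z_4',z_3',z_2')=(1,u-1,t-u)$ or $(0,u+1,t-u-1)$; these are precisely the ``top parts'' of a form-A and a form-B $s$-path.

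Next I would pin down exactly which nodes $\Sigma$ covers by a counting argument, which is the step driving everything. Let the rows of $D$ have lengths $s,t,u,1$ and call a row of length $s$ the \emph{big row}. Each constituent path meets each row at most once, so $\Sigma$ uses at most $t$ big-row nodes; on the other hand the non-big rows contain only $t+u+1$ nodes, so $\Sigma$ uses at least $(2t+u+1)-(t+u+1)=t$ big-row nodes. Thus $\Sigma$ uses exactly one big-row node in each of its $t$ paths and covers \emph{every} node outside the big row. Since the length-$t$ row has exactly $t$ nodes, the same pigeonhole shows every constituent path of $\Sigma$ also meets the length-$t$ row exactly once. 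Consequently the complement of $s(\Sigma)$ in $D$ is precisely $s-t$ big-row nodes. (This is exactly what excludes spurious intermediate types, e.g.\ a type such as $(3,2,2)$, that the dominance bound $\mu\unlhd\lambda'$ alone would permit.)

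I would then attach the $s-t$ leftover nodes as singleton paths using Result~\ref{cor:3.4a}, whose hypothesis holds automatically: if some constituent path $\pi_j$ of $\Sigma$ had two nodes in the column $b'$ of a leftover big-row node $(p,b')$, one above and one below row $p$, then by the non-decreasing-column condition the unique big-row node of $\pi_j$ would be forced into position $(p,b')$ --- but that node is not in $\Sigma$, a contradiction. Inserting the singletons therefore yields an ordered $s$-path $\Pi$ with $s(\Pi)=D$, of length $s+t+u+1$, containing the $t$-subpath $\Sigma$ of length $2t+u+1$. Its type is the type of $\Sigma$ augmented by $s-t$ ones, so $\Pi$ is a form-A or a form-B $s$-path in the sense of Definition~\ref{def:3.9a}.

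Finally the ``Moreover'' statements fall out of the coverage facts. The length-$1$ paths of $\Pi$ are exactly the $s-t$ inserted big-row singletons, giving (i), and if any exist then $s-t\GEQ1$, so $s>t$. For (ii), every path of $\Sigma$ meets both the big row and the length-$t$ row, so a length-$2$ path is one node in each of these, and its presence forces $z_2'\GEQ1$, hence $t>u$. For (iii), a length-$3$ path of $\Sigma$ is a big-row node, a length-$t$-row node and one further node; the single node of the length-$1$ (fourth) row lies in the length-$4$ path when $\Pi$ is form-A and, there being no length-$4$ path, in a length-$3$ path when $\Pi$ is form-B. Thus for form-A all length-$3$ paths lie in the first three rows, while for form-B exactly one length-$3$ path uses the big row, the length-$t$ row and the fourth row, the rest lying in the first three rows. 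The only genuine care is needed in the degenerate cases $s=t$, $t=u$, or $u=1$: there one must first fix which of the equal-length rows plays the role of the big row and of the length-$t$ row, but the counting in the second paragraph is unaffected and these reduce to bookkeeping. I expect that bookkeeping, rather than any conceptual difficulty, to be the main obstacle.
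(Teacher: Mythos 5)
Your proposal is correct, and while it shares the paper's skeleton --- start from a maximal ordered $t$-path $\Sigma$ of length $2t+u+1$ (Result~\ref{thm:2.14a}), count constituent lengths to get $z_3'+2z_4'=u+1$ with $z_4'\LEQ1$, and adjoin the $s-t$ leftover nodes as singletons via Result~\ref{cor:3.4a} --- it diverges genuinely in how the row-distribution claims (i)--(iii) are obtained. The paper proves these by a case analysis inside $z_4'=0$ and $z_4'=1$, tracking the rows of lengths $s,t,u$ as $i_1,i_2,i_3$ and disposing of ties with the remarks that an ``apparent ambiguity arises only if such paths do not exist''; your single pigeonhole coverage fact --- $\Sigma$ has at most one big-row node per constituent while the non-big rows hold only $t+u+1$ nodes, so each constituent meets the length-$s$ row and the length-$t$ row exactly once and $\Sigma$ covers every node outside the big row --- delivers (i)--(iii) uniformly and makes the tie cases you defer to ``bookkeeping'' already vacuous by your own counts ($s=t$ forces $z_1=0$ and $t=u$ forces $z_2=0$, exactly the paper's ambiguity remarks). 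It also buys you a self-contained verification of the hypothesis of Result~\ref{cor:3.4a}: a straddling pair in the column of a leftover node would squeeze that constituent's (unique) big-row node into the leftover position, whereas the paper deduces the same hypothesis more economically from maximality of the $t$-path (a straddled node could be spliced in, lengthening $\Sigma$); both are sound, and the coverage fact is arguably the stronger tool since it is implicitly what the paper's later arguments lean on. One arithmetical quibble, not damaging: your equality $\nu_1+\cdots+\nu_{t-1}=2t+u-1$ holds only when $t>u$ (for $t=u\GEQ2$ the sum is $2t+u-2$, the $t$-th part of $\lambda'$ being a 3), but since $\nu_t\GEQ2$ in all cases the strict inequality $2t+u>\nu_1+\cdots+\nu_{t-1}$ survives and your conclusion $z_1'=0$ stands --- the paper instead extracts $z_1'=0$ from the linear relations within each case, so this step too is a (harmless) variant.
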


\begin{proof}
We assume the hypothesis in the statement of the lemma.
Choose distinct $i_1,i_2,i_3\in\{1,2,3\}$
so that $\lambda_{i_1}=s$, $\lambda_{i_2}=t$ and $\lambda_{i_3}=u$.
First note that an $s$-path in $D$ of length $s+t+u+1$ contains all
nodes of $D$.
We will construct an $s$-path $\Pi$ of this length in $D$ with the
stated properties.

Let $N=(4,l)$ be the fourth row node of $D$.
Since $D$ is an admissible diagram it has subsequence type
$\lambda'=4^13^{u-1}2^{t-u}1^{s-t}$.
Thus $D$ has a path of length 4 and every path in $D$ of length 4
contains $N$.
\par
Since $D$ is admissible it has $t$-paths of length $2t+u+1$ and no
$t$-paths of greater length (see Result~\ref{res:8a}(i)).
Let $\Pi'=(\pi_1',\ldots,\pi_t')$ be one of these $t$-paths.
Using Result~\ref{res:3a} we may take $\Pi'$ to be an ordered
$t$-path of length $2t+u+1$.
Using the notation introduced before Definition~\ref{def:3.9a},
let $z_i'=z_i(\Pi')$ for $1\LEQ i\LEQ 4$.
Counting paths and nodes in $\Pi'$,
\begin{equation}
\label{eqn:19a}
z_1'+z_2'+z_3'+z_4'=t,\hspace{0.25cm}
z_1'+2z_2'+3z_3'+4z_4'=2t+u+1.
\end{equation}
\\[-4.5ex]
So,
\\[-4ex]
\begin{equation}
\label{eqn:20a}
z_2'+2z_3'+3z_4'=t+u+1,\hspace{0.25cm}
3z_1'+2z_2'+z_3'=2t-u-1.
\end{equation}
\\[-4ex]
Hence,
\\[-3.5ex]
\begin{equation}
\label{eqn:21a}
z_2'+z_3'+z_4'\LEQ t,\hspace{0.25cm}
z_3'+2z_4'\GEQ u+1.
\end{equation}
Since every path of length 4 in $D$ contains $N$, $z_4'\LEQ1$.
Below we will consider the cases $z_4'=0$ and $z_4'=1$ separately.

As $D$ contains no $t$-paths with more than $2t+u+1$ nodes, none of
the nodes of $D$ which are not nodes of $\Pi'$ can be inserted into
a path of $\Pi'$ to form a larger path.
Hence, by Result~\ref{cor:3.4a}, $\Pi'$ may be extended to an
ordered $s$-path $\Pi$ by the appropriate insertion of the $s-t$
paths of length 1 determined by the remaining nodes.

Case $z_4'=0$:  Then $t-u-1=2z_1'+z_2'\GEQ0$ from (2) and (3), so the choice of $i_3$ is unique.
Moreover, from (4) we get $z_3'\GEQ u+1$.
Since any path in $\Pi$ of length 3, avoiding row $i_3$ must contain $N$, the $u$ nodes of row $i_3$ must all lie in paths of length 3, thus giving us $u$ of the paths of length 3.
Hence, there is exactly one additional path of length 3, its nodes being on rows $i_1$, $i_2$ and 4  (all the remaining paths of length 3  necessarily have their nodes on rows $i_1$, $i_2$ and $i_3$).
In particular, we have $z_3'=u+1$.
From~(3) we now get $z_2'=t-u-1$, hence $z_2'=2z_1'+z_2'$.
It follows that $z_1'=0$.
Since all nodes on rows $i_3$ and 4 are on paths of length 3, the paths of length 2 only involve nodes on rows $i_1$ and $i_2$.
The $u+1$ paths of length 3 contain $u+1$ nodes on
row $i_2$.
Hence the remaining $t-u-1$ nodes on row $i_2$ are on the
$t-u-1$ paths of length 2.
Moreover, the $s-t$ nodes of $D$ which are not nodes of $\Pi'$
are all on row $i_1$.
If it is possible to choose $i_1$ in more than one way then $s=t$
and $z_1(\Pi)=0$.
So an apparent ambiguity arises concerning the rows of $D$ containing
the nodes of paths in $\Pi$ of length 1 only if such paths do not
exist.
\par
Case $z_4'=1$:
The path in $\Pi'$ of length 4 contains one node on row
$i_3$ and each path in $\Pi'$ of length 3 contains a node of row
$i_3$.
Hence $z_3'\LEQ u-1$.
From \eqref{eqn:21a} we get $z_3'=u-1$, and from \eqref{eqn:20a}
we get $z_2'=t-u$ and $z_1'=0$.
Since all nodes on rows $i_3$ and 4 are on paths of lengths
3 and 4, the paths of length 2 involve only nodes on rows $i_1$ and
$i_2$.
If it is possible to choose $i_3$ in more than one way, then $t=u$ and $z_2'=0$.
So an apparent ambiguity arises concerning the rows of $D$ containing the nodes of paths in $\pi$ of length 2 only if such paths do not exist.
The path of length 4 and the $u-1$ paths of length 3 contain $u$ nodes
on row $i_2$.
Hence the remaining $t-u$ nodes on row $i_2$ are on the $t-u$ paths of
length 2.
Thus the $s-t$ nodes of $D$ which are not nodes of $\Pi'$
are all on row $i_1$.
If it is possible to choose $i_1$ in more than one way then $s=t$
and $z_1(\Pi)=0$.
So an apparent ambiguity arises concerning the rows of $D$ containing
the nodes of paths in $\Pi$ of length 1  only if such paths
do not exist.

Since $\Pi$ is ordered and either
$z_1(\Pi)=s-t$,
$z_2(\Pi)=t-u$,
$z_3(\Pi)=u-1$, and
$z_4(\Pi)=1$
or
$z_1(\Pi)=s-t$,
$z_2(\Pi)=t-u-1$,
$z_3(\Pi)=u+1$, and
$z_4(\Pi)=0$,
$\Pi$ is either a form-A $s$-path or a form-B $s$-path.
\end{proof}

\begin{remark}
Keeping the hypothesis of Lemma~\ref{lem:3.12a}, it follows from the proof of Lemma~\ref{lem:3.12a} that if $\Gamma$ is an ordered $s$-path in $D$ of length $s+t+u+1$ which is either a form-A $s$-path or a form-B $s$-path, then the distribution of the nodes of the paths in $\Gamma$ in the rows of $D$ are as set out in Lemma~\ref{lem:3.12a}(i), (ii), (iii).
This is because $\Gamma$ necessarily contains an ordered $t$-subpath of length $2t+u+1$ (see also Remark~\ref{Remark:ordered_subpaths}(i)).
\end{remark}

\begin{lemma}\label{lemma:lambda1=st}
Assume that composition $\lambda$ satisfies Hypothesis $(*)$ and that $D\in\mathcal D^{(\lambda)}$ is an admissible diagram.
If $\lambda_1=s$ or $t$, then $D$ is the support of a form-A $s$-path.
\end{lemma}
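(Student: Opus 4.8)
The plan is to build on the dichotomy already supplied by Lemma~\ref{lem:3.12a}: $D$ is the support of an ordered $s$-path that is either a form-A or a form-B $s$-path. If it is form-A there is nothing to prove, so the real task is to manufacture a form-A $s$-path from the admissibility of $D$ together with the hypothesis. First I would record what that hypothesis buys combinatorially. Writing $i_1,i_2,i_3$ for the rows of $D$ of lengths $s,t,u$ (as in the proof of Lemma~\ref{lem:3.12a}), the condition $\lambda_1=s$ or $\lambda_1=t$ says exactly that $i_3\ne 1$: the shortest row, the one of length $u$, is not the top row, and equivalently the top row is one of the two rows carrying the non-fourth-row nodes of the special length-$3$ path of a form-B decomposition.

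Next I would isolate a clean, robust surgery. Let $N=(4,l)$ be the unique fourth-row node. I claim it suffices to exhibit an ordered $s$-path $\Theta$ in the top three rows $D'=D\setminus\{N\}$, of type $3^{u}2^{t-u}1^{s-t}$ (this is the conjugate $\tilde\lambda'$ of $\tilde\lambda=(\lambda_1,\lambda_2,\lambda_3)$, and it has $s$ constituents), possessing a length-$3$ constituent $L$ whose row-$3$ node lies in a column $\LEQ l$. Indeed, appending $N$ below $L$ turns $L$ into a length-$4$ path and turns $\Theta$ into an ordered $s$-path of type $4^{1}3^{u-1}2^{t-u}1^{s-t}$, whose constituents of length $\GEQ 2$ form a $t$-subpath of length $2t+u+1$; this is a form-A $s$-path in the sense of Definition~\ref{def:3.9a} (compare Remark~\ref{Remark:ordered_subpaths}(ii)). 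The crucial point is that appending $N$ preserves orderedness: any constituent lying to the left of $L$ has, at every row $\LEQ 3$, a strictly smaller column than the corresponding node of $L$, hence than the row-$3$ node of $L$, hence $<l$, so the added bottom node $N=(4,l)$ violates no ordering inequality. A short check shows $D'$ is admissible in $\mathcal D^{(\tilde\lambda)}$, so the three-part case recalled in the introduction to this section supplies such a $\Theta$.

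It then remains to produce the required short-column length-$3$ constituent, and this is where $i_3\ne1$ enters and where the principal difficulty lies. Admissibility of $D$ (Result~\ref{res:8a}) gives a length-$4$ path $Q$; since every length-$4$ path runs through all four rows and ends at $N$, its row-$3$ node $(3,z_0)$ satisfies $z_0\LEQ l$. When $i_3=3$ the argument closes at once: each of the $u$ length-$3$ constituents of $\Theta$ meets all three rows, so uses exactly one node of row $3$; as row $3$ then has precisely $u$ nodes, every row-$3$ node, in particular $(3,z_0)$, is the tip of a length-$3$ constituent, and appending $N$ there finishes the proof.

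The remaining case $i_3=2$ is the genuine obstacle. Here the $u$ length-$3$ constituents of $\Theta$ exhaust the short row $2$ but only $u$ nodes of the longer row $3$, and the length-$2$ constituents live on rows $1$ and $3$; consequently $(3,z_0)$ need not be a length-$3$ tip of the particular $\Theta$ handed to us. To repair this I would force the decomposition: starting from the length-$3$ path $Q^{-}=\big((1,x_0),(2,y_0),(3,z_0)\big)$ obtained by deleting $N$ from $Q$, I would construct an ordered type-$3^{u}2^{t-u}1^{s-t}$ $s$-path of $D'$ having $Q^{-}$ as one of its constituents, so that this constituent has row-$3$ tip $z_0\LEQ l$, and then append $N$ to it as above. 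Producing a maximal-type decomposition containing a prescribed maximal-length path is the crux of the lemma; I expect to obtain it by the extension and insertion machinery of Results~\ref{thm:2.14a} and~\ref{cor:3.4a} applied to the three-part diagram $D'$, building a maximal $u$-subpath around $Q^{-}$ and then inserting the remaining nodes as length-$2$ and length-$1$ paths, in the spirit of the $z_4'=1$ analysis in the proof of Lemma~\ref{lem:3.12a}.
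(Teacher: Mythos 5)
Your reduction framework is sound as far as it goes: deleting $N=(4,l)$ does leave an admissible three-row diagram $D'$ (the row of length $u$ bounds the number of disjoint length-$3$ paths, which is what rules out $k$-paths in $D'$ exceeding the partial sums of $3^u2^{t-u}1^{s-t}$), appending $N$ to a length-$3$ constituent whose row-$3$ node lies in a column $\le l$ does preserve orderedness by exactly the comparison you give, and your case $i_3=3$ is complete --- it parallels the paper's one-paragraph elimination of the possibility $u=\lambda_3$. But the case $i_3=2$, i.e.\ $\lambda_2=u$ with $t>u$, is where the paper's proof spends essentially all of its effort (the lexicographically minimal choice of the length-$4$ path $\pi$, the successive exchanges producing $\widehat\pi_j,\widehat\pi_{j''}$, then the $\tilde\pi$'s, the reordering $\Gamma^*$ via Result~\ref{res:3a} and the reassembled $t$-path $\hat\Gamma$), and there your argument stops at ``I expect to obtain it.'' That is a plan, not a proof, so the lemma is not established.

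Moreover, the intermediate claim you propose to carry that case --- that $D'$ admits an ordered $s$-path of type $3^u2^{t-u}1^{s-t}$ having the prescribed path $Q^-$ as a constituent --- is false for an arbitrary length-$4$ path $Q$. Take $\lambda=(2,1,2,1)$, so $s=t=2$, $u=1$, $\lambda_1=s$, $i_3=2$, and let $D=\{(1,1),(1,2),(2,2),(3,1),(3,2),(4,2)\}$. This $D$ is admissible of subsequence type $\lambda'=(4,2)$, and $Q=\bigl((1,1),(2,2),(3,2),(4,2)\bigr)$ is a legitimate length-$4$ path; but the complement of $s(Q^-)$ in $D'$ is $\{(1,2),(3,1)\}$, which is not a path, so no decomposition of $D'$ of type $(3,2)$ contains $Q^-$ (one must instead take $Q=\bigl((1,2),(2,2),(3,2),(4,2)\bigr)$). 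So your route would require both a careful selection of $Q$ --- compare the paper's choice of $\pi$ minimizing first $l_1$, then $l_2$, then $l_3$ --- and a genuine exchange argument rebuilding the decomposition around it; Results~\ref{thm:2.14a} and~\ref{cor:3.4a} alone do not supply this, since reordering preserves only the support and insertion only handles nodes not trapped between two nodes of one constituent in the same column. Filling this in would amount to redoing the paper's $u=\lambda_2$ analysis, which is the actual mathematical content of the lemma.
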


\begin{proof}
We assume the hypothesis in the statement of the lemma and suppose further that $D$ has no form-A $s$-paths.
By Lemma~\ref{lem:3.12a}, $D$ has a form-B $s$-path $\Pi$.
In particular, $t>u$ since $z_2(\Pi)=t-u-1\GEQ0$.
Moreover, the nodes of the paths in $\Pi$ are distributed as set out in the statement again of Lemma~\ref{lem:3.12a}.
Write $\Pi=(\pi_1,\ldots,\pi_s)$.
Also since $D$ is admissible it has a path $\pi$ of length 4.
Suppose $s(\pi)=\{(1,l_1),(2,l_2),(3,l_3),(4,l)\}$ where $N=(4,l)$ is the
unique node of $D$ on the fourth row; among all such paths of length
4 we will choose $\pi$ to be the path which first minimizes $l_1$,
then minimizes $l_2$ and finally minimizes $l_3$.
Let $\pi_j$ be the path in $\Pi$ which contains $N$.
Then $\pi_j$ has length 3 and its remaining nodes are on
rows of lengths $s$ and $t$ (recall $t>u$).
Since $z_3(\Pi)=u+1\GEQ 2$ and the remaining paths in $\Pi$ of length 3
have their nodes on the first three rows there is a path
$\pi_{j'}$ in $\Pi$ of length 3 with $s(\pi_{j'})=\{(1,l_1'),(2,l_2'),(3,l_3')\}$.
If $l_3'\LEQ l$, we would get a form-A $s$-path in $D$ by replacing the
paths $\pi_{j'}$ and $\pi_j$ in $\Pi$ by the paths with support $s(\pi_{j'})\cup\{N\}$ and
$s(\pi_j)-\{N\}$, respectively.
Since this is not so, every path in $\Pi$ of length 3 ending on
row 3 ends in a column strictly to the right of $N$.

If $u=\lambda_3$, the node $(3,l_3)$ is on a path in $\Pi$ of length 3 by
Lemma~\ref{lem:3.12a}(iii).
Since $l_3\LEQ l$, this is excluded by the previous paragraph.
Hence $u\ne\lambda_3$.

Suppose now that $u=\lambda_2$.
Then the node $(2,l_2)$ is on a path $\pi_{j''}$ in $\Pi$ of
length 3 by Lemma~\ref{lem:3.12a} (iii) and $j''\neq j$.
Write $\pi_{j''}=\{(1,l_1''),(2,l_2),(3,l_3'')\}$
and $\pi_j=\{(1,l_1^{*}),(3,l_3^{*}),(4,l)\}$.
As above $l<l_3''$ and so $j<j''$, $l_1^{*}<l_1''\,(\LEQ l_2)$ and $l_3^{*}<l_3''$.
Since $\{(1,l_1^{*}),(2,l_2),(3,l_3),(4,l)\}$ is the support of a path of length 4
in $D$, we have that $l_1\LEQ l_1^{*}$ by the minimal choice of $\pi$.
Hence $l_1\LEQ l_1^{*}\LEQ l_3^{*}\LEQ l<l_3''$ and
$l_1^{*}<l_1''\LEQ l_2\LEQ l_3\LEQ l<l_3''$.

Next we choose $j'$ minimal subject to  $j<j'\LEQ j''$ and $\pi_{j'}$ has length 3 and let $s(\pi_{j'})=\{(1,l'_1),\, (2,l_2'),\, (3,l_3')\}$.
Combining with our observations in the last paragraph we get that $l_1\LEQ l_1^*<l_1'\LEQ l_2'\LEQ l_2\LEQ l_3\LEQ l<l_3''$.
It follows that $\{(1,l_1),\ (2,l_2'),\, (3,l_3),\, (4,l)\}$ is the support of a path in $D$ of length 4.
The minimal choice of $\pi$ forces $l_2'\GEQ l_2$, and since $l_2'\LEQ l_2$ from above, we get that $l_2'=l_2$.
We conclude that $j'=j''$, so $\pi_{j'}=\pi_{j''}$.
In particular, the choice of $\pi_{j'}$ ensures that no path $\pi_i$ with $j<i<j''$ has length 3.
Hence, by Lemma~\ref{lem:3.12a} (i), (ii), no path $\pi_i$ with $j<i<j''$ has a second row node.

Suppose for a moment that $l_2\LEQ l_3^*$.
Then there are paths $\widehat{\pi}_{j}$ and $\widehat{\pi}_{j''}$ in $D$ with support $s(\pi_{j})\cup\{(2,l_2)\}$ and $s(\pi_{j''})-\{(2,l_2)\}$ respectively.
The assumption that $l_2\LEQ l_3^*$, together with the observation that no path $\pi_i$ with $j<i<j''$ has a second row node, ensure that the $s$-path
obtained by replacing the paths $\pi_{j}$ and $\pi_{j''}$ in $\Pi$ by
the paths $\widehat\pi_{j}$ and $\widehat\pi_{j''}$, respectively,
is a form-A $s$-path.
Since this is excluded, we have $l_3^{*}<l_2$.

Let $\pi_{j^0}$ be the path in $\Pi$ containing the node $(3,l_3)$.
Since $l_3^*<l_2$ and $l_2\LEQ l_3$, we get $l_3^*<l_3\LEQ l<l_3''$ (by combining with certain inequalities obtained above).
It follows that $j<j^0<j''$.
Hence, from the discussion in the last-but-one paragraph, $\pi_{j^0}$ has length at most 2 and does not contain any node in the second row.

Suppose first that $\pi_{j^0}$ has length 2, so $s(\pi_{j^0})=\{(1,l_1^0),(3,l_3)\}$ for some $l_1^0$ satisfying $l_1^*<l_1^0<l_1''$.
Recalling that $l_1''\LEQ l_2\LEQ l_3$, we see that the $s$-path obtained from $\Pi$ by replacing \\
(i) $\pi_j$ by $\tilde \pi_j$ where $s(\tilde\pi_j)=s(\pi_j)-\{(4,l)\}$, \\
(ii) $\pi_{j^0}$ by $\tilde\pi_{j^0}$ where $s(\tilde\pi_{j^0})=s(\pi_{j^0})\cup\{(2,l_2),\, (4,l)\}$, and \\
(iii) $\pi_{j''}$ by $\tilde\pi_{j''}$ where $s(\tilde\pi_{j''})=s(\pi_{j''})-\{(2,l_2)\}$, \\
is an (ordered) form-A $s$-path in $D$, a contradiction.

It follows that $\pi_{j^0}$ has length 1, so $s(\pi_{j^0})=\{(3,l_3)\}$.
Let $\Psi$ be the ordered $t$-subpath of $\Pi$ of length $2t+u+1$ consisting precisely of the paths of length $>1$ in $\Pi$ (keeping the order these paths have in $\Pi$).
Also let $\hat\Psi$ be the $t$-path of length $2t+u-1$ in $D$ obtained from $\Psi$ by replacing\\
(i) $\pi_j$ by $\check \pi_j$ where $s(\check\pi_j)=\{(1,l_1^*),\, (3,l_3)\}$, and\\
(ii) $\pi_{j''}$ by $\check\pi_{j''}$ where $s(\check\pi_{j''})=\{(1,l_1''),\, (3,l_3'')\}$.

In particular, $s(\hat\Psi)\cup\{(3,l_3^*),\,(4,l),\,(2,l_2)\}=s(\Psi)\cup\{(3,l_3)\}$.
Let $\Gamma$ be the $(r+2)$-subpath of $\hat\Psi$ consisting of $\check\pi_j$, $\check\pi_{j''}$ and all the $r$ paths (with $r\GEQ0$) of $\Psi$ of length 2 which lie strictly between $\pi_j$ and $\pi_{j''}$ in the ordering of $\Psi$.
Clearly $\Gamma$ has length $2r+4$.
By Result~\ref{res:3a} $\Gamma$ is equivalent to an ordered $(r+2)$-path $\Gamma^*$ in $D$ (of length $2r+4$).
Since $s(\Gamma^*)\,(=s(\Gamma))$ does not contain any second row nodes, the maximum length of a path in $\Gamma^*$ is 2.
Hence $\Gamma^*$ consists of precisely $r+2$ paths each of length 2.
Let $\pi^*_{j^0}$ be the path in $\Gamma^*$ containing $(3,l_3)$, so $s(\pi^*_{j^0})=\{(1,\bar l_1), (3,l_3)\}$ for some $\bar l_1$ with $\bar l_1\LEQ l_1''\,(\LEQ l_2\LEQ l_3\LEQ l)$.
Also let $\pi^*$ be the path in $D$ of length 4 with $s(\pi^*)=s(\pi^*_{j^0})\cup\{(2,l_2),\, (4,l)\}$.

Next, we construct the $t$-path $\hat\Gamma$ in $D$ with $s(\hat\Gamma)=(s(\Psi)\cup\{(3,l_3)\})-\{(3,l_3^*)\}$ as follows.\\
(i) Keeping the order the paths appear in $\Psi$, include all paths beginning from the first one up to and including the path immediately before $\pi_j$ (but not including $\pi_j$).
\\
(ii) Then include all the paths in $\Gamma^*$, in the order they appear in $\Gamma^*$, but with $\pi^*_{j^0}$ replaced by $\pi^*$.
\\
(iii) Finally, include all the paths in $\Psi$ appearing strictly after $\pi_{j''}$ keeping the order these paths have in $\Psi$.

By its construction, $\hat\Gamma$ is an ordered $t$-path in $D$ of length $2t+u+1$.
Using Result~\ref{cor:3.4a}, we can construct an ordered $s$-path $\hat\Pi$ of length $s+t+u+1$ in $D$ by inserting in the sequence $\hat\Gamma$ the $s-t$ paths of length 1, each having support a node of $D-s(\hat\Gamma)$.
To justify this, observe that if the node $(a,b)$ belongs to $D-s(\hat\Gamma)$, then the existence of nodes $(a_1,b)$ and $(a_2,b)$ in $\hat\Gamma$ with $a_1<a<a_2$ would imply the existence of a $t$-path in $D$ of length $2t+u+2$ which is not possible by Result~\ref{res:8a}(i).
Clearly $\hat\Pi$ is a form-A $s$-path in $D$.
Hence $u\ne\lambda_2$.

Summing up, we have shown that the assumption  that $D$ has no form-A $s$-paths implies that $t>u$ and $u=\lambda_1$.
The required result now follows easily.
\end{proof}

In the following remark we recall a definition and some results in~\cite{MPa21} which will turn out to be useful in the the discussion that follows.

\begin{remark}\label{rem_4.13}
Let $\lambda$ be a composition of $n$ and let $D\in\mathcal D^{(\lambda)}$.
Also assume that $D=s(\Pi)$ for some $k$-path $\Pi=(\pi_1,\ldots,\pi_k)$ in $D$.
As in~\cite[Definition~3.8]{MPa21} we denote by $D(\Pi)$ the diagram in $\mathcal D^{(\lambda)}$ constructed from $D$ by replacing each node of $\pi_j$ by a node on the same row but in column $j$, for $j=1,\ldots,k$.
Then

\vspace{-3ex}
\begin{enumerate}[(i)]
\item
If $\Pi$ is ordered, we have from~\cite[Lemma 3.9]{MPa21} that $t^{D(\Pi)}w_D$ is a standard $D(\Pi)$-tableau.

\item
In the special case $\lambda$ satisfies Hypothesis (*) and $\Pi$ is a form-A $s$-path in $D$ (then $\Pi$ is ordered and has type $\lambda'$), we have that $D(\Pi)$ is a special diagram in $\mathcal D^{(\lambda)}$.
Moreover, $w_D$ is a prefix of $w_{D(\Pi)}$ since from item (i) of this remark, $t^{D(\Pi)}w_D$ is a standard $D(\Pi)$-tableau (see Result~\ref{res:3a}).
\end{enumerate}
\end{remark}

\begin{example}
\label{ex:3.8a}
Suppose the composition  $\lambda$ satisfies Hypothesis~(*).
Suppose further that $\lambda_1=s$ or $t$.
By Lemma~\ref{lemma:lambda1=st} we know that any admissible diagram $D\in\mathcal D^{(\lambda)}$ has a form-A $s$-path, hence by Remark~\ref{rem_4.13}(ii) we know that $w_D$ is a prefix of $w_E$ for some special diagram $E\in\mathcal D^{(\lambda)}$.
Given now an admissible diagram $D\in\mathcal D^{(\lambda)}$, below we consider some particular examples of special diagrams $E\in\mathcal D^{(\lambda)}$ which could serve this purpose.
\par
(i): 
If $\tilde\lambda=(s,u,t)$ and $C\subseteq\{1,\ldots,t\}$ with $|C|=u$
and $v=\min(C)$, let $F_{C}=\{(1,i)\colon 1\LEQ i\LEQ s\}\cup
\{(2,i)\colon i\in C\}\cup\{(3,i)\colon 1\LEQ i\LEQ t\}\cup\{(4,v)\}$.
Clearly, the list of lengths of columns of $F_{C}$ is a
rearrangement of $\lambda'$.
It follows that $F_{C}$ is a special and hence admissible diagram in $\mathcal{D}^{(\lambda)}$.
\par
If $\tilde\lambda=(8,3,5)$ and $C=\{2,3,4\}$, then $F_{C}$ is the diagram
$\setlength{\arraycolsep}{3pt}
\begin{array}{*{8}{c}}
\times&\times&\times&\times&\times&\times&\times&\times\\
      &\times&\times&\times&      &      &      &      \\
\times&\times&\times&\times&\times&      &      &      \\
      &     \times &&      &      &      &      &      \\

\end{array}$

Let $C'\subseteq\{1,\ldots, t\}$ with $|C'|=u$ and suppose that $w_{F_{C'}}$ is a prefix of $w_{F_C}$.
Then the $F_C$-tableau $t^{F_C}w_{F_{C'}}$ is standard (and can be constructed by moving the entries of the $F_{C'}$-tableau $t_{F_{C'}}$ along the rows keeping their order, to the nodes of $F_C$.
Observe that row 1 (resp., row 3) of $F_C$ coincides with row 1 (resp., row 3) of $F_{C'}$ from the way these diagrams are constructed.
Moreover, in order to preserve standardness, we see that the nodes in row 2 of $F_C$ are in exactly the same positions as the nodes in row 2 of $F_{C'}$.
This forces $C=C'$.
We conclude that $F_C=F_{C'}$.

(ii): 
If $\tilde\lambda=(t,s,u)$, $C\subseteq\{1,\ldots,s-t+u\}$ with
$|C|=u$ and $v=\min (C)$, then
$G_{C}=\{(1,i)\colon i\in (C\cup\{s-t+u+1,\ldots,s\})\}\cup
\{(2,i)\colon 1\LEQ i\LEQ s\}\cup\{(3,i)\colon i\in C\}\cup\{(4,v)\}$
is a special and hence admissible diagram in $\mathcal{D}^{(\lambda)}$.
\par
If $\tilde\lambda=(5,8,3)$ and $C=\{2,4,5\}$, then $G_{C}$ is
the diagram
$
\setlength{\arraycolsep}{3pt}
\begin{array}{*{8}{c}}
      &\times&      &\times&\times&      &\times&\times\\
\times&\times&\times&\times&\times&\times&\times&\times\\
      &\times&      &\times&\times&      &      &      \\
      & \times     &      & &      &      &      &      \\
\end{array}$

Suppose now that $C'\subseteq\{1,\ldots,s-t+u\}$ with $|C'|=u$ and that $t^{G_C} w_{G_{C'}}$ is a standard $G_C$-tableau.
From the way diagrams $G_C$ and $G_{C'}$ are defined, we see that their second rows coincide.
Moreover, the last $t-u$ nodes in row 1 of these diagrams are in exactly the same positions.
In order to preserve standardness, the nodes in row 2 of this of these diagrams must also occupy the same positions.
Hence $C=C'$ and this forces $G_C=G_{C'}$.

(iii): 
If $\tilde\lambda=(t,u,s)$, 
let $C=\{v\}\cup\tilde C$   with $|\tilde C|=u-1$, $\tilde C\subseteq\{s-t+2,\ldots,s\}$ and
$v\in\{1,\ldots,\min(\tilde C)-1\}$.
(In particular, $v=\min(C)$ and $|C|=u$.)
Let $\tilde v=v$ or $s-t+1$ according as $v<s-t+1$ or not.
Then
$H_{C}=\{(1,i)\colon i=\tilde v\mbox{ or } s-t+2\LEQ i\LEQ s\}
\cup\{(2,i)\colon i\in C\}\cup\{(3,i)\colon 1\LEQ i\LEQ s\}\cup
\{(4,v)\}$ is a special and hence admissible diagram in $\mathcal{D}^{(\lambda)}$.

If $\tilde\lambda=(5,3,8)$ and $C=\{3,6,8\}$, then $H_{C}$ is
the diagram
$\setlength{\arraycolsep}{3pt}
\begin{array}{*{8}{c}}
      &      &\times&      &\times&\times&\times&\times\\
      &      &\times&      &      &\times&      &\times\\
\times&\times&\times&\times&\times&\times&\times&\times\\
      &      &\times&      &      &      &      &      \\
\end{array}$

Suppose now that $t^{H_C}w_{H_{C'}}$ is a standard $H_C$-tableau for permitted choices of $C$ and $C'$ above.
From the construction of $H_C$ and $H_{C'}$, the nodes in their third rows and moreover, the last $t-1$ nodes in their first rows, occupy exactly the same positions.
To preserve standardness, the last $u-1$ nodes in row 2 of the two diagrams must also be in exactly the same positions.
For the same reason, the node in row 4 (resp., first node in row 2) of $H_C$ is in exactly the same position as the node in row 4 (resp., first node in row 2) of $H_{C'}$.
Finally, from the way these diagrams are defined, their first nodes in row 1 are also forced to be in exactly the same position, proving that $H_C=H_{C'}$.
\end{example}

Next, we introduce some more notation.

\begin{definition}\label{def_Hypoth_dag}
Suppose composition $\lambda$ satisfies Hypothesis~(*) with the additional constraints $t>u$ and $\lambda_1=u$.
We then say that the diagram $D\in\mathcal D^{(\lambda)}$ satisfies Hypothesis~($\dag$) if in the associated  tuple $\alpha_D=(\alpha_1,\ldots,\alpha_m)$ of column-lengths of $D$ there is a single 4, exactly $(u-1)$ 3's and, in addition, the 4 occurs before all the 3's.
We also define the \emph{determining tuple} $\hat\alpha_D$ of $D$ by $\hat\alpha_D=(\hat\alpha_1,\ldots,\hat\alpha_m)$, where $\hat\alpha_j=\alpha_j$ if $\alpha_j\in\{2,3,4\}$, and $\hat\alpha_j=1$ or $\bar1$ according as the single node in a column $j$ of length 1 in $D$ is on row 2 or 3.
[Clearly the tuple $\hat\alpha_D$ determines a diagram satisfying Hypothesis ($\dag$) uniquely, since all columns in $D$ having length 2 (resp., length 3) necessarily have their nodes on rows 2 and 3 (resp., on rows 1, 2 and 3).]
\end{definition}

\begin{remark}\label{rem:operations}
Let $E\in\mathcal D^{(\lambda)}$ satisfy Hypothesis~($\dag$) (with composition $\lambda$ as in Definition~\ref{def_Hypoth_dag}) and let $\hat\alpha_E=(\hat\alpha_1,\ldots,\hat\alpha_m)$ be the determining tuple for $E$.
Suppose now that diagram $E'$ has been obtained from $E$ via any one of the operations (C1)--(C5) below.

Operations (C1)--(C4):
If $(\hat\alpha_j,\hat\alpha_{j+1})=(1,2)$ or $(3,2)$ or $(2,\bar1)$ or $(3,\bar1)$ for some $j\GEQ1$, diagram $E'$ is obtained from $E$ by interchanging the $j$-th and $(j+1)$-th columns.
Suppose for convenience that the first $j-1$ columns of $E$ contain exactly $\omega -1$ nodes (where $\omega\GEQ1$).
Then, from the way they are constructed, the tableaux $t_{E}$ and $t_{E'}$ differ only on these two columns, which respectively take
the form \\
{\setlength{\arraycolsep}{2pt}
\setlength{\fboxsep}{0pt}
$\begin{array}{|cc|}
\hline
        &  \\
\omega  & \omega\!+\!1 \\
        & \omega\!+\!2 \\
        &  \\
\hline
\end{array}$
and
$\begin{array}{|cc|}
\hline
             &  \\
\omega       &  \omega\!+\!2 \\
\omega\!+\!1 &  \\
             &  \\
\hline
\end{array}$ \ after operation (C1), {\setlength{\arraycolsep}{2pt}
\setlength{\fboxsep}{0pt}
$\begin{array}{|cc|}
\hline
\omega        &  \\
\omega\!+\!1  &  \omega\!+\!3 \\
\omega\!+\!2  &  \omega\!+\!4 \\
              &  \\
\hline
\end{array}$
and
$\begin{array}{|cc|}
\hline
             & \omega\!+\!2 \\
\omega       & \omega\!+\!3 \\
\omega\!+\!1 & \omega\!+\!4 \\
             &  \\
\hline
\end{array}$} \ after operation~(C2),
{\setlength{\arraycolsep}{2pt}
\setlength{\fboxsep}{0pt}
$\begin{array}{|cc|}
\hline
        &  \\
\omega  &  \\
\omega\!+\!1 & \omega\!+\!2 \\
        &  \\
\hline
\end{array}$
and
$\begin{array}{|cc|}
\hline
        &  \\
        & \omega\!+\!1 \\
\omega  & \omega\!+\!2 \\
        &  \\
\hline
\end{array}$} \ after operation (C3),
}
{\setlength{\arraycolsep}{2pt}
\setlength{\fboxsep}{0pt}
$\begin{array}{|cc|}
\hline
\omega        &  \\
\omega\!+\!1  &  \\
\omega\!+\!2        & \omega\!+\!3 \\
        &  \\
\hline
\end{array}$
and
$\begin{array}{|cc|}
\hline
        & \omega\!+\!1 \\
        & \omega\!+\!2 \\
\omega  & \omega\!+\!3 \\
        &  \\
\hline
\end{array}$} after operation (C4).
Since $t^{E'}w_{E}$ is standard, $w_{E}$ is a prefix of $w_{E'}$.

Operation (C5): If $\hat\alpha_j=2$ for some $j$, diagram $E'$ is obtained from $E$ after replacing the $j$-th column of $E$ by two adjacent columns each having a single node; the single node of the first one (resp., second one) being on row 3 (resp., row 2).
The difference in $t_E$ and $t_{E'}$ can be described by
{\setlength{\arraycolsep}{2pt}
\setlength{\fboxsep}{0pt}
$\begin{array}{|c|}
\hline
         \\
\omega   \\
\omega\!+\!1 \\
         \\
\hline
\end{array}$
and
$\begin{array}{|cc|}
\hline
       &  \\
       & \omega\!+\!1 \\
\omega &  \\
       &  \\
\hline
\end{array}$, respectively.
}
In particular, $E'$ has $m+1$ columns.

Clearly, in all of the above cases $E'\in\mathcal D^{(\lambda)}$ and $E'$ satisfies Hypothesis~($\dag$).
Moreover,  $t^{E'}w_{E}$ is a standard $E'$-tableau, so $w_{E}$ is a prefix of $w_{E'}$.
\end{remark}

\begin{example}
\label{ex:3.10a}
Let composition $\lambda$ satisfy Hypothesis~(*).
Suppose further that $u<t$ and $\lambda_1=u$.
In this example we introduce certain types of admissible diagrams $D\in\mathcal D^{(\lambda)}$ satisfying Hypothesis~($\dag$).
In particular, such diagrams $D$ cannot be transformed using operations (C1)--(C4) to an admissible diagram $K$ such that $t^Kw_D$ is a standard $K$-tableau.
Moreover, in all cases, they are the support of a form-B $s$-path.

(i): 
If $\tilde\lambda=(u,s,t)$, let $\varepsilon$, $\eta$, $\theta$,
$\zeta$ and $\psi$ be non-negative integers satisfying
$s=\varepsilon+\eta+1+\zeta+\psi$,
$t=\varepsilon+\theta+\zeta+u$, $\psi\GEQ u-1$, and $\eta\GEQ\theta$,
let $\mathcal{C}$ be a $(u-1)$-subset of
$\{s+\theta-\psi+1,\ldots,s+\theta\}$, and let
$\mathcal{S}=(\varepsilon,\eta,\theta,\zeta,\psi,\mathcal{C})$.
Define $M^{(\mathcal{S})}$ to be the diagram with nodes
\\[1ex]
$\begin{array}{lll}
(1,i) & \colon & i=\varepsilon+\eta+1\mbox{ or }i\in\mathcal{C}, \\[1ex]
(2,i) & \colon & 1\LEQ i\LEQ\varepsilon+\eta+1\mbox{ or }
\varepsilon+\eta+\theta+2\LEQ i\LEQ s+\theta, \\[1ex]
(3,i) & \colon &  1\LEQ i\LEQ\varepsilon\mbox{ or }
\varepsilon+\eta+1\LEQ i\LEQ\varepsilon+\eta+\theta+\zeta+1\mbox{ or }
i\in\mathcal{C}, \\[1ex]
(4,i) & \colon & i=\varepsilon+\eta+1.
\end{array}$
\\[1ex]
Then $M^{(\mathcal{S})}\in\mathcal D^{(\lambda)}$, it has  $s+\theta$ columns and it is easy to check that it satisfies Hypothesis~($\dag$).
In the determining tuple of $M^{(\mathcal{S})}$ there is a single 4  and the tuple occurring before the 4 consists of an $\varepsilon$-tuple of 2's followed by an $\eta$-tuple of 1's.
Following the 4 there is a $\theta$-tuple of $\bar1$'s followed by a $\zeta$-tuple of 2's followed by a $\psi$-tuple of 1's and 3's containing exactly $(u-1)$ 3's.

Schematically, $M^{(\mathcal{S})}$ takes the form
\[
\setlength{\arraycolsep}{0.175em}
\newcommand{\X}{\times}
\begin{array}{*{10}{c}}
\overbrace{
\begin{array}[t]{*{4}{c}}
\mathstrut     &        &    \\
\mathstrut  \X & \cdots & \X \\
\mathstrut  \X & \cdots & \X \\
\mathstrut     &        &    \\
\end{array}
}^{\varepsilon}
&
\overbrace{
\begin{array}[t]{*{3}{c}}
\mathstrut     &        &    \\
\mathstrut  \X & \cdots & \X \\
\mathstrut     &        &    \\
\mathstrut     &        &    \\
\end{array}
}^{\eta}
&
\overbrace{
\begin{array}[t]{*{1}{c}}
\mathstrut  \X \\
\mathstrut  \X \\
\mathstrut  \X \\
\mathstrut  \X \\
\end{array}
}^{1}
&
\overbrace{
\begin{array}[t]{*{3}{c}}
\mathstrut     &        &    \\
\mathstrut     &        &    \\
\mathstrut  \X & \cdots & \X \\
\mathstrut     &        &    \\
\end{array}
}^{\theta}
&
\overbrace{
\begin{array}[t]{*{3}{c}}
\mathstrut     &        &    \\
\mathstrut  \X & \cdots & \X \\
\mathstrut  \X & \cdots & \X \\
\mathstrut     &        &    \\
\end{array}
}^{\zeta}
&
\overbrace{
\begin{array}[t]{*{7}{c}}
\mathstrut     &        & \X &        &    &        & \X \\
\mathstrut  \X & \cdots & \X & \cdots & \X & \cdots & \X \\
\mathstrut     &        & \X &        &    &        & \X \\
\mathstrut     &        &    &        &    &        &    \\
\end{array}
}^{\psi}
\end{array}\ .
\]

From the construction of $M^{(\mathcal S)}$ we see that $M^{(\mathcal S)}$ is special if, and only if, $\theta=0$.
It is immediate that $M^{(\mathcal S)}$ is special if $\theta=0$.
Conversely, if $M^{(\mathcal S)}$ is special and $s>t$, it is clear that $\theta=0$.
Finally, if $s=t$, the relations in line~2 of this example imply that $\eta=\theta$ and $\psi=u-1$, so the additional constraint that $M^{(\mathcal S)}$ is special, now implies that $\eta=\theta=0$.
In particular we have that $M^{(\mathcal S)}$ is admissible if $\theta=0$.

If $\eta\GEQ\theta\GEQ1$,
let $\Pi=(\pi_1,\ldots,\pi_s)$ and $\Pi'=(\pi_1',\ldots,\pi_s')$
be the $s$-paths defined by
\\[1ex]
$
\pi_j=
\left\{
\begin{array}{lll}
\{
(2,j),(3,j)
\}
&\mbox{ if }&
1\LEQ j\LEQ\varepsilon,
\\[1ex]
\{
(2,j),(3,j+\eta+1)
\}
&\mbox{ if }&
\varepsilon+1\LEQ j\LEQ\varepsilon+\theta,
\\[1ex]
\{
(2,j)
\}
&\mbox{ if }&
\varepsilon+\theta+1\LEQ j\LEQ\varepsilon+\eta,
\\[1ex]
\{
(1,j),(2,j),(3,j),(4,j)
\}
&\mbox{ if }&
j=\varepsilon+\eta+1,
\\[1ex]
\{
(2,j+\theta),(3,j+\theta)
\}
&\mbox{ if }&
\varepsilon+\eta+2\LEQ j\LEQ s-\psi, 
\\[1ex]
\{
(1,j+\theta),(2,j+\theta),(3,j+\theta)
\}
&\mbox{ if }&
s-\psi+1
\LEQ j\LEQ s
\mbox{ and }j+\theta\in\mathcal{C},
\\[1ex]
\{
(2,j+\theta)
\}
&\mbox{ if }&
s-\psi+1
\LEQ j\LEQ s
\mbox{ and }j+\theta\notin\mathcal{C}.
\end{array}
\right.
$
\\[1ex]
and
\\[1ex]
$
\pi_j'=
\left\{
\begin{array}{lll}
\{
(2,j),(3,j)
\}
&\mbox{ if }&
1\LEQ j\LEQ\varepsilon,
\\[1ex]
\{
(2,j),(3,j+\eta),(4,j+\eta)
\}
&\mbox{ if }&
j=\varepsilon+1,
\\[1ex]
\{
(2,j),(3,j+\eta)
\}
&\mbox{ if }&
\varepsilon+2\LEQ j\LEQ\varepsilon+\theta,
\\[1ex]
\{
(2,j)
\}
&\mbox{ if }&
\varepsilon+\theta+1\LEQ j\LEQ\varepsilon+\eta,
\\[1ex]
\{
(1,j),(2,j),(3,j+\theta)
\}
&\mbox{ if }&
j=\varepsilon+\eta+1,
\\[1ex]
\{
(2,j+\theta),(3,j+\theta)
\}
&\mbox{ if }&
\varepsilon+\eta+2\LEQ j\LEQ s-\psi, 
\\[1ex]
\{
(1,j+\theta),(2,j+\theta),(3,j+\theta)
\}
&\mbox{ if }&
s-\psi+1
\LEQ j\LEQ s
\mbox{ and }j+\theta\in\mathcal{C},
\\[1ex]
\{
(2,j+\theta)
\}
&\mbox{ if }&
s-\psi+1
\LEQ j\LEQ s
\mbox{ and }j+\theta\notin\mathcal{C}.
\end{array}
\right.
$

Then $\Pi$ has type $\lambda'$, so that $M^{\mathcal{(S)}}$ is an
admissible diagram (see Remark~\ref{Remark:ordered_subpaths}).
Also, $\Pi'$ is a form-B $s$-path.

We look at a specific case.
If $\tilde\lambda=(3,8,5)$ and
$\mathcal{S}=(1,3,1,0,3,\{7,8\})$
then\break
$
M^{(\mathcal{S})}=
\begin{array}{*{9}{c}}
        &        &        &        & \times &        & \times & \times &        \\
 \times & \times & \times & \times & \times &        & \times & \times & \times \\
 \times &        &        &        & \times & \times & \times & \times &        \\
        &        &        &        & \times &        &        &        &        \\
\end{array}$
\par
Let $\Pi=
\{
$ \{(2,1),$ $(3,1)\},
$ \{(2,2),$ $(3,6)\},
$ \{(2,3)\},
$ \{(2,4)\},
$ \{(1,5),$ $(2,5),$ $(3,5),$ $(4,5)\},
$ \{(1,7),$ $(2,7),$ $(3,7)\},
$ \{(1,8),$ $(2,8),$ $(3,8)\},
$ \{(2,9)\}
\}$
and
$\Pi'=
\{
$ \{(2,1),$ $(3,1)\},
$ \{(2,2),$ $(3,5),$ $(4,5)\},
$ \{(2,3)\},
$ \{(2,4)\},
$ \{(1,5),$ $(2,5),$ $(3,6)\},
$ \{(1,7),$ $(2,7),$ $(3,7)\},
$ \{(1,8),$ $(2,8),$ $(3,8)\},
$ \{(2,9)\}
\}$.
Then $\Pi$ and $\Pi'$ are $8$-paths in the diagram $M^{(\mathcal{S})}$ of
types $\lambda'=4^13^22^21^3$ and $3^42^11^3$, respectively.
Thus $M^{(\mathcal{S})}$ is admissible.
Moreover $\Pi'$ is a form-B $8$-path.

(ii): 
If $\tilde\lambda=(u,t,s)$, let
$\eta$, $\varepsilon$, $\theta$, $\varphi$, and $\zeta$ be
non-negative integers satisfying $s=\eta+\varepsilon+\varphi+\zeta+u$
and $t=\varepsilon+\theta+\zeta+u$, and $\varphi\GEQ\theta$.
Let $\mathcal{S}=(\eta,\varepsilon,\theta,\varphi,\zeta)$.
Define $N^{(\mathcal{S})}$ to be the diagram with nodes
\\[1ex]
$\begin{array}{lll}
(1,i)&\colon& i=\eta+\varepsilon+\theta+1\mbox{ or }
s+\theta-u+2\LEQ i\LEQ s+\theta,
\\[1ex]
(2,i)&\colon& \eta+1\LEQ i\LEQ\eta+\varepsilon+\theta+1\mbox{ or }
s+\theta-u-\zeta+2\LEQ i\LEQ s+\theta,
\\[1ex]
(3,i)&\colon& 1\LEQ i\LEQ\eta+\varepsilon\mbox{ or }
\eta+\varepsilon+\theta+1\LEQ i\LEQ s+\theta,
\\[1ex]
(4,i)&\colon& i=\eta+\varepsilon+\theta+1.
\end{array}
$
\\[1ex]
Then $N^{(\mathcal{S})}\in\mathcal D^{(\lambda)}$, it has $s+\theta$ columns and it is easy to check that it satisfies Hypothesis~($\dag$).
The determining tuple of $N^{(\mathcal{S})}$ consists, going from left to right, of an $\eta$-tuple of $\bar1$'s followed by an $\varepsilon$-tuple of 2's, a $\theta$-tuple of $1$'s, a single 4, a $\varphi$-tuple of $\bar1$'s, a $\zeta$-tuple of $2$'s and, finally, a $(u-1)$-tuple of $3$'s.

Schematically, $N^{(\mathcal{S})}$ takes the form
\[
\setlength{\arraycolsep}{0.175em}
\newcommand{\X}{\times}
\begin{array}{*{10}{c}}
\overbrace{
\begin{array}[t]{*{3}{c}}
\mathstrut     &        &    \\
\mathstrut     &        &    \\
\mathstrut  \X & \cdots & \X \\
\mathstrut     &        &    \\
\end{array}
}^{\eta}
&
\overbrace{
\begin{array}[t]{*{4}{c}}
\mathstrut     &        &    \\
\mathstrut  \X & \cdots & \X \\
\mathstrut  \X & \cdots & \X \\
\mathstrut     &        &    \\
\end{array}
}^{\varepsilon}
&
\overbrace{
\begin{array}[t]{*{3}{c}}
\mathstrut     &        &    \\
\mathstrut  \X & \cdots & \X \\
\mathstrut     &        &    \\
\mathstrut     &        &    \\
\end{array}
}^{\theta}
&
\overbrace{
\begin{array}[t]{*{1}{c}}
\mathstrut  \X \\
\mathstrut  \X \\
\mathstrut  \X \\
\mathstrut  \X \\
\end{array}
}^{1}
&
\overbrace{
\begin{array}[t]{*{3}{c}}
\mathstrut     &        &    \\
\mathstrut     &        &    \\
\mathstrut  \X & \cdots & \X \\
\mathstrut     &        &    \\
\end{array}
}^{\varphi}
&
\overbrace{
\begin{array}[t]{*{3}{c}}
\mathstrut     &        &    \\
\mathstrut  \X & \cdots & \X \\
\mathstrut  \X & \cdots & \X \\
\mathstrut     &        &    \\
\end{array}
}^{\zeta}
&
\overbrace{
\begin{array}[t]{*{3}{c}}
\mathstrut \X &        & \X \\
\mathstrut \X & \cdots & \X \\
\mathstrut \X &        & \X \\
\mathstrut    &        &    \\
\end{array}
}^{u-1}
\end{array}\ .
\]

It is easy to see that $N^{(\mathcal S)}$ is special if, and only if, $\theta=0$.
It follows that $N^{(\mathcal S)}$ is admissible if $\theta=0$.
Observe also that the relations $s$ and $t$ satisfy, force $\varphi=0$ and $\eta=0$ if $s=t$.
So in the case $s=t$ the shape of $M^{(\mathcal S)}$ in fact coincides with the shape of $N^{(\mathcal S)}$.

If $\varphi\GEQ\theta\GEQ1$,
let $\Pi=(\pi_1,\ldots,\pi_s)$ and $\Pi'=(\pi_1',\ldots,\pi_s')$  be
the $s$-paths defined by
\\[1ex]
\par
$
\pi_j=
\left\{
\begin{array}{lll}
\{
(3,j)
\}
&\mbox{ if }&
1\LEQ j\LEQ\eta,
\\[1ex]
\{
(2,j),(3,j)
\}
&\mbox{ if }&
\eta+1\LEQ j\LEQ\eta+\varepsilon,
\\[1ex]
\{
(2,j),(3,j+\theta+1)
\}
&\mbox{ if }&
\eta+\varepsilon+1\LEQ j\LEQ\eta+\varepsilon+\theta,
\\[1ex]
\{
(1,j),(2,j),(3,j),(4,j)
\}
&\mbox{ if }&
j=\eta+\varepsilon+\theta+1,
\\[1ex]
\{
(3,j+\theta)
\}
&\mbox{ if }&
\eta+\varepsilon+\theta+2\LEQ j\LEQ \eta+\varepsilon+\varphi+1, 
\\[1ex]
\{
(2,j+\theta),(3,j+\theta)
\}
&\mbox{ if }&
\eta+\varepsilon+\varphi+2\LEQ j\LEQ \eta+\varepsilon+\varphi+\zeta+1,
\\[1ex]
\{
(1,j+\theta),(2,j+\theta),(3,j+\theta)
\}
&\mbox{ if }&
\eta+\varepsilon+\varphi+\zeta+2\LEQ j\LEQ\eta+\varepsilon+\varphi+\zeta+u,
\end{array}
\right.
$
\\[1ex]
and
\\[1ex]
$
\pi_j'=
\left\{
\begin{array}{lll}
\{
(3,j)
\}
&\mbox{ if }&
1\LEQ j\LEQ\eta,
\\[1ex]
\{
(2,j),(3,j)
\}
&\mbox{ if }&
\eta+1\LEQ j\LEQ\eta+\varepsilon,
\\[1ex]
\{
(2,j),(3,j+\theta),(4,j+\theta)
\}
&\mbox{ if }&
j=\eta+\varepsilon+1,
\\[1ex]
\{
(2,j),(3,j+\theta)
\}
&\mbox{ if }&
\eta+\varepsilon+2\LEQ j\LEQ\eta+\varepsilon+\theta,
\\[1ex]
\{
(1,j),(2,j),(3,j+\theta)
\}
&\mbox{ if }&
j=\eta+\varepsilon+\theta+1,
\\[1ex]
\{
(3,j+\theta)
\}
&\mbox{ if }&
\eta+\varepsilon+\theta+2\LEQ j\LEQ \eta+\varepsilon+\varphi+1,
\\[1ex]
\{
(2,j+\theta),(3,j+\theta)
\}
&\mbox{ if }&
\eta+\varepsilon+\varphi+2\LEQ j\LEQ\eta+\varepsilon+\varphi+\zeta+1,
\\[1ex]
\{
(1,j+\theta),(2,j+\theta),(3,j+\theta)
\}
&\mbox{ if }&
\eta+\varepsilon+\varphi+\zeta+2
\LEQ j\LEQ \eta+\varepsilon+\varphi+\zeta+u.
\end{array}
\right.
$

Then $\Pi$ has type $\lambda'$, so that $N^{\mathcal{(S)}}$ is an
admissible diagram.
Also, $\Pi'$ is a form-B $s$-path.

We look at a specific case.
If $\tilde\lambda=(3,5,8)$ and $\mathcal{S}=(3,0,1,1,1,3)$ then\break
$
N^{(\mathcal{S})} =
\begin{array}{*{9}{c}}
        &        &        &        & \times &        &        & \times & \times \\
        &        &        & \times & \times &        & \times & \times & \times \\
 \times & \times & \times &        & \times & \times & \times & \times & \times \\
        &        &        &        & \times &        &        &        &        \\
\end{array}.
$
\par
Let
$\Pi=\{$
$\{(3,1)\},$
$\{(3,2)\},$
$\{(3,3)\},$
$\{(2,4),$ $(3,6)\},$
$\{(1,5),$ $(2,5),$ $(3,5),$ $(4,5)\},$
$\{(2,7),$ $(3,7)\},$
$\{(1,8),$ $(2,8),$ $(3,8)\},$
$\{(1,9),$ $(2,9),$ $(3,9)\}$
$\}$
and
$\Pi'=\{$
$\{(3,1)\},$
$\{(3,2)\},$
$\{(3,3)\},$
$\{(2,4),$ $(3,5),$ $(4,5)\},$
$\{(1,5),$ $(2,5),$ $(3,6)\},$
$\{(2,7),$ $(3,7)\},$
$\{(1,8),$ $(2,8),$ $(3,8)\},$
$\{(1,9),$ $(2,9),$ $(3,9)\}$
$\}$.
\\
Then $\Pi$ and $\Pi'$ are $8$-paths in the diagram $N^{(\mathcal{S})}$ of
types $\lambda'=4^13^22^21^3$ and $3^42^11^3$, respectively.
Thus $N^{(\mathcal{S})}$ is admissible.
Moreover $\Pi'$ is a form-B $8$-path.
\end{example}

In the course of the proof of Theorem~\ref{thm:3.13a} we will show that any diagram $D$ satisfying Hypothesis~($\dag$), and which also satisfies some additional constraints, can be transformed using operations (C1)--(C5) to a diagram which either equals $M^{(\mathcal S)}$ or $N^{(\mathcal S)}$ for a suitable tuple $S$.
The following two lemmas will also turn out to be useful.

\begin{lemma}\label{lemma:4.7}
Under the hypothesis and notation of Example~\ref{ex:3.10a}(i), let $K=M^{(\mathcal S)}$ and $K'=M^{(\mathcal S')}$ be the diagrams corresponding to $\mathcal S=(\varepsilon,\eta,\theta,\zeta,\psi,\mathcal{C})$ and $\mathcal S'=(\varepsilon',\eta',\theta',\zeta',\psi',\mathcal{C}')$, respectively, where $\eta\GEQ\theta$ and $\eta'\GEQ\theta'$.
Suppose that the further constraints $\zeta=0$ if $\eta>\theta$ and $\zeta'=0$ if $\eta'>\theta'$ are in force.
If $w_{K'}$ is a prefix of $w_K$, then $K=K'$.
\end{lemma}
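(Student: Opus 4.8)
The plan is to argue via standard tableaux, following the template of Example~\ref{ex:3.8a}. Write $\lambda=(u,s,t,1)$, so that $K$ and $K'$ share the row-composition $\lambda$, and let $R_1,\dots,R_4$ be the consecutive blocks of $\{1,\dots,s+t+u+1\}$ placed into the four rows by the row-filling $t^K$ (equivalently by $t^{K'}$, since the row lengths agree). Since $w_{K'}$ is a prefix of $w_K$, Result~\ref{res:3a} gives that $T:=t^Kw_{K'}$ is a standard $K$-tableau. Because acting by $w_{K'}$ replaces each entry $i$ by $iw_{K'}$, the set of entries lying in row $a$ of $T$ is $R_aw_{K'}$; the identical computation applied to $t_{K'}=t^{K'}w_{K'}$ shows that row $a$ of the column-filling $t_{K'}$ has the same entry-set $R_aw_{K'}$. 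Hence $T$ is exactly the tableau obtained by taking, for each row $a$, the entries occupying row $a$ of $t_{K'}$ and writing them in increasing order into the nodes of row $a$ of $K$. This is the ``move the entries of $t_{K'}$ along the rows into the nodes of $K$'' construction used in Example~\ref{ex:3.8a}, and all of the available information is encoded in the demand that this $K$-tableau be standard.

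Next I would aim for the stronger statement $\mathcal S=\mathcal S'$, which gives $K=M^{(\mathcal S)}=M^{(\mathcal S')}=K'$ at once. Since $K$ and $K'$ both satisfy Hypothesis~($\dag$), each is recovered from its determining tuple (Definition~\ref{def_Hypoth_dag}), and the block description of $\hat\alpha_{M^{(\mathcal S)}}$ recorded in Example~\ref{ex:3.10a}(i) shows that $\mathcal S$ can in turn be read off uniquely from the determining tuple. Thus the task becomes to recover the blocks of $\hat\alpha_{K'}$, equivalently the parameters of $\mathcal S'$, from the standard tableau $T$ and to match them with those of $\mathcal S$.

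I would then extract the parameters in decreasing order of rigidity. The unique length-$4$ column (the column of the row-$4$ node) and the $u-1$ length-$3$ columns (those carrying the set $\mathcal C'$, sitting at the right-hand end) are the most constrained: column-increase together with the two-dimensional standardness condition pins down the column of the row-$4$ node and, exactly as in the $C=C'$ arguments of Example~\ref{ex:3.8a}, forces $\mathcal C'=\mathcal C$ and hence $\psi'=\psi$. The defining relations $s=\varepsilon+\eta+1+\zeta+\psi$ and $t=\varepsilon+\theta+\zeta+u$ then tie the remaining parameters together, so that it only remains to separate the length-$2$ columns (contributing $\varepsilon'+\zeta'$) from the length-$1$ columns (the $\eta'$ single nodes on row $2$ before the $4$ and the $\theta'$ single nodes on row $3$ after it). Comparing the horizontal positions that standardness forces on the row-$2$ and row-$3$ entries of $t_{K'}$ inside the staircase shape of $K$, I would show these distributions coincide, yielding $\varepsilon'=\varepsilon$, $\eta'=\eta$, $\theta'=\theta$, $\zeta'=\zeta$.

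The main obstacle is precisely this last separation: a single node on row $3$ following the $4$ (a $\bar1$) and portions of the $2$-columns enjoy some freedom of placement, so two genuinely different tuples could a~priori yield prefix-comparable elements. This is exactly what the extra hypotheses ``$\zeta=0$ whenever $\eta>\theta$'' and its primed counterpart are designed to exclude, and I expect them to be invoked at this stage to rule out the competing configuration. The degenerate boundary cases ($s=t$, or one of $\varepsilon,\eta,\theta,\zeta$ equal to $0$), where some blocks collapse and the staircase of $M^{(\mathcal S)}$ simplifies, would be handled separately. Once $\mathcal S=\mathcal S'$ is established, $K=K'$ is immediate.
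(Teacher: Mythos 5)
Your framework is the right one and coincides with the paper's starting point: by Result~\ref{res:3a} the tableau $t^Kw_{K'}$ is a standard $K$-tableau, and it is obtained by transporting the entries of each row of $t_{K'}$, in order, onto the nodes of $K$ in the same row; moreover $\mathcal S$ is recoverable from the determining tuple of Definition~\ref{def_Hypoth_dag}, so it suffices to prove $\mathcal S=\mathcal S'$. However, your proposal stops short exactly at the decisive step. You identify the separation of the length-1 and length-2 column parameters as ``the main obstacle'' and say you \emph{expect} the hypotheses $\zeta=0$ if $\eta>\theta$ (and its primed counterpart) to be invoked there, but you never give the argument, and no mechanism in your sketch produces it. The paper's actual mechanism is: standardness forces the length-4 column of $t_{K'}$ onto the length-4 column of $K$ (the row-4 entry must occupy the unique row-4 node), whence counting row-2 and row-3 nodes to the \emph{left} of that column gives $\varepsilon=\varepsilon'$ and $\eta=\eta'$ outright, and counting to the \emph{right} gives the two identities $\zeta+\psi=\zeta'+\psi'$ and $\theta+\zeta=\theta'+\zeta'$; the constraints are then used in a four-case analysis on whether $\eta>\theta$ or $\eta=\theta$ and likewise for the primed tuple, where the mixed cases collapse to contradictions (e.g.\ if $\eta>\theta$, $\eta'=\theta'$, then $\zeta=0$ yields $\theta=\theta+\zeta=\theta'+\zeta'\GEQ\theta'=\eta'=\eta>\theta$) and the remaining cases yield $\theta=\theta'$, $\zeta=\zeta'$, $\psi=\psi'$. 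Without some version of this computation your plan does not prove the lemma; it restates where the difficulty lies.

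There is also a concrete error in your proposed order of extraction. You want to pin down $\mathcal C'=\mathcal C$ first, ``and hence $\psi'=\psi$''. Two problems. First, the absolute column indices of the length-3 columns are not comparable before the block parameters are matched: $K$ has $s+\theta$ columns and $K'$ has $s+\theta'$ columns, and $\mathcal C\subseteq\{s+\theta-\psi+1,\ldots,s+\theta\}$ while $\mathcal C'\subseteq\{s+\theta'-\psi'+1,\ldots,s+\theta'\}$; this is why the paper matches the length-3 columns only \emph{after} establishing $\psi=\psi'$, and even then needs (in the case $s>t$) the count of length-1 columns between consecutive length-3 columns to conclude that their positions coincide. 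Second, even granting $\mathcal C'=\mathcal C$, the inference $\psi'=\psi$ is a non sequitur: $\psi$ is the length of the trailing block of 1's and 3's and is fixed by the relation $s=\varepsilon+\eta+1+\zeta+\psi$, not by $\mathcal C$ --- the same $(u-1)$-set is compatible with different values of $\psi$, since the trailing block may contain length-1 columns before $\min(\mathcal C)$. So the step you would do first is both unjustified and, as a source of $\psi'=\psi$, invalid; the counting identities around the length-4 column must come first, and that is where the extra hypotheses actually do their work.
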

\begin{proof}
We assume the hypothesis and we suppose that $w_{K'}$ is a prefix of $w_K$.
Then $t^Kw_{K'}$ is a standard $K$-tableau (see Result~\ref{res:3a}).
Observe that $t^Kw_{K'}$ is obtained from the $K'$-tableau $t_{K'}$ by moving the entries of each row of $t_{K'}$, keeping their order, to the nodes of $K$ on the same row.
This forces the entry on row 4 of $t_{K'}$ to move to the single node on row 4 of $K$ and, moreover the first entry (starting from the left) on row 1 of $t_{K'}$ to move to the first node on row 1 of $K$.

Thus, in order to preserve standardness, the column of length 4 in $t_{K'}$ must move to the column of length 4 in $K$.
Counting nodes on rows 3 and 2 of $K$ and $K'$ lying to the left of the column of length 4 in each of the two diagrams, we immediately get $\varepsilon=\varepsilon'$ and $\eta=\eta'$.
On the other hand, counting nodes on rows 2 and 3 lying to the right of the column of length 4 in each of the two diagrams, we get $\zeta+\psi=\zeta'+\psi'$ and $\theta+\zeta=\theta'+\zeta'$ (since $\theta+\zeta+(u-1)=\theta'+\zeta'+(u-1)$).

In order to show that $\theta=\theta'$, $\zeta=\zeta'$ and $\psi=\psi'$, it is convenient to consider the four subcases (a) $\eta>\theta$, $\eta'>\theta'$, (b) $\eta>\theta$, $\eta'=\theta'$, (c) $\eta=\theta$, $\eta'>\theta'$, and (d) $\eta=\theta$, $\eta'=\theta'$.
In (a) we have $\zeta=0=\zeta'$, so $\theta=\theta'$ and $\psi=\psi'$ as required.
In (b), we have $\zeta=0$ hence $\theta=\theta+\zeta=\theta'+\zeta'\GEQ\theta'=\eta'=\eta>\theta$, a contradiction, so this case cannot occur.
Similarly in (c), we have $\theta'=\theta'+\zeta'=\theta+\zeta\GEQ\theta=\eta=\eta'>\theta'$, again a contradiction.
Finally in (d) we have $\theta=\eta=\eta'=\theta'$, hence $\zeta=\zeta'$ and $\psi=\psi'$.

It remains to look at at the last $\psi\,(=\psi')$ columns of $t_{K'}$ and $K$, which contain precisely $u-1$ columns of length 3 in each case.
A similar argument as for the case of the column of length 4 shows that the $u-1$ columns of length 3 in $t_{K'}$ move to the $u-1$ columns of length 3 in $K$.
This completes the proof in the case $s=t$, since $\psi=u-1$ in this case as we have seen.
If $s>t$, counting the number of columns of length 1 in $K$ and $K'$ occurring between the block of $\zeta$ columns each having length 2 and the first column of length 3, we see that the first column of length 3 occurs in exactly the same position in both diagrams.
Similarly, by looking at the number of columns of length 1 between any pair of consecutive columns of length 3 in $K$ and $K'$, we conclude that $K=K'$.
\end{proof}

\begin{lemma}\label{lemma:4.8}
Under the hypothesis and notation of Example~\ref{ex:3.10a}(ii), let $K=N^{(\mathcal S)}$ and $K'=N^{(\mathcal S')}$ be the diagrams corresponding to $\mathcal S=(\eta,\varepsilon,\theta,\varphi,\zeta)$ and $\mathcal S'=(\eta',\varepsilon',\theta',\varphi',\zeta')$, respectively, where $\varphi\GEQ\theta$ and $\varphi'\GEQ\theta'$.
Suppose that the further constraints $\varepsilon=0$ if $\varphi>\theta$ and $\varepsilon'=0$ if $\varphi'>\theta'$ are in force.
If $w_{K'}$ is a prefix of $w_K$, then $K=K'$.
\end{lemma}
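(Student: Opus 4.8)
The plan is to run the argument of Lemma~\ref{lemma:4.7} on the diagrams $N^{(\mathcal S)}$ of Example~\ref{ex:3.10a}(ii). Assume that $w_{K'}$ is a prefix of $w_K$; by Result~\ref{res:3a} the $K$-tableau $t^Kw_{K'}$ is standard. Because $K$ and $K'$ have the same row-composition, the tableaux $t^K$ and $t^{K'}$ carry the same set of entries on each row, so $t^Kw_{K'}$ is obtained from $t_{K'}$ by re-distributing the entries of each row, in increasing order, among the nodes of $K$ on that row. All of the subsequent analysis is an analysis of this row-by-row re-distribution.

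First I would locate the image of the unique length-$4$ column of $K'$. Reading $t_{K'}$ off by columns, that column carries four consecutive integers $m<m+1<m+2<m+3$ down rows $1,2,3,4$; here $m$ is the smallest row-$1$ entry (its column lies strictly to the left of the right-hand row-$1$ block of $K'$) and $m+3$ is the unique row-$4$ entry. Consequently $m$ must go to the leftmost row-$1$ node of $K$ and $m+3$ to the unique row-$4$ node of $K$, both of which lie in the length-$4$ column of $K$, say column $c=\eta+\varepsilon+\theta+1$. Column-standardness down column $c$ then traps $m+1$ at $(2,c)$ and $m+2$ at $(3,c)$, since $m+1$ is the only available row-$2$ value and $m+2$ the only available row-$3$ value lying strictly between $m$ and $m+3$. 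Hence the length-$4$ column of $t_{K'}$ maps exactly onto the length-$4$ column of $K$.

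With this alignment fixed, the number of row-$3$ (respectively row-$2$) nodes lying strictly to the left of the length-$4$ column is the same in $K$ and in $K'$, because in each diagram this count is just the number of row-$3$ (respectively row-$2$) entries smaller than the common value $m+2$ (respectively $m+1$). Reading the counts off the determining tuples gives $\eta+\varepsilon=\eta'+\varepsilon'$ and $\varepsilon+\theta=\varepsilon'+\theta'$; the defining relations $t=\varepsilon+\theta+\zeta+u$ and $s=\eta+\varepsilon+\varphi+\zeta+u$ (with $s,t,u$ fixed) then force $\zeta=\zeta'$ and $\varphi=\varphi'$. It remains to separate $\eta,\varepsilon,\theta$ from $\eta',\varepsilon',\theta'$, and here I would split into the four cases determined by whether $\varphi>\theta$ or $\varphi=\theta$ and whether $\varphi'>\theta'$ or $\varphi'=\theta'$, exactly as in Lemma~\ref{lemma:4.7}. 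Using $\varphi=\varphi'$ together with the constraints $\varepsilon=0$ when $\varphi>\theta$ and $\varepsilon'=0$ when $\varphi'>\theta'$, the two mixed cases $(\varphi>\theta,\ \varphi'=\theta')$ and $(\varphi=\theta,\ \varphi'>\theta')$ are contradictory, while the case $\varphi>\theta,\ \varphi'>\theta'$ gives $\varepsilon=\varepsilon'=0$ and the case $\varphi=\theta,\ \varphi'=\theta'$ gives $\theta=\theta'$; either way the two counting equalities then yield $\eta=\eta'$, $\varepsilon=\varepsilon'$, $\theta=\theta'$. Thus all five parameters agree, so $\mathcal S=\mathcal S'$, and since $N^{(\mathcal S)}$ is completely determined by $\mathcal S$ this gives $K=K'$.

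The main obstacle will be the rigorous justification of the length-$4$ column alignment, which is where the whole force of standardness is used; once that is in hand the counting and the case split are routine. I expect this lemma to be slightly shorter than Lemma~\ref{lemma:4.7}, since the determining tuple of $N^{(\mathcal S)}$ ends in a clean block of $(u-1)$ columns of length $3$ with no interleaved columns of length $1$ (there is no analogue of the set $\mathcal C$ here), so no separate argument about the placement of the length-$3$ columns is needed to conclude.
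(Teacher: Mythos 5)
Your proposal is correct and follows essentially the same route as the paper's proof: use standardness of $t^Kw_{K'}$ to pin the length-$4$ column of $t_{K'}$ onto the length-$4$ column of $K$, count row-$2$ and row-$3$ nodes to its left to get $\varepsilon+\theta=\varepsilon'+\theta'$ and $\eta+\varepsilon=\eta'+\varepsilon'$, and then run the identical four-case analysis on $(\varphi,\theta)$ versus $(\varphi',\theta')$, with cases (b) and (c) excluded by the same contradiction. The only (cosmetic) differences are that you justify the column alignment in more detail than the paper's one-line appeal to Lemma~\ref{lemma:4.7}, and you obtain $\zeta=\zeta'$ and $\varphi=\varphi'$ from the defining relations $t=\varepsilon+\theta+\zeta+u$ and $s=\eta+\varepsilon+\varphi+\zeta+u$ rather than by counting nodes to the right of the length-$4$ column as the paper does.
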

\begin{proof}
We assume the hypothesis, and suppose that $w_{K'}$ is a prefix of $w_K$.
Then $t^Kw_{K'}$ is a standard $K$-tableau.
Since this tableau is obtained by moving the entries of each row of $t_{K'}$ to the nodes of $K$ on the same row, keeping the order these entries appear, the column of length 4 in $t_{K'}$ must move to the column of length 4 in $K$ by a similar argument to that in the previous lemma.
It is also clear that the last $u-1$ columns of $t_{K'}$, each having length 3, move to the last $u-1$ columns of $K$.
Considering the second row nodes (resp., third row nodes) lying to the left of the column of length 4 in each diagram, we see that $\varepsilon+\theta=\varepsilon'+\theta'$ (resp., $\eta+\varepsilon=\eta'+\varepsilon'$).
Similarly, considering the second row nodes lying to the right of the column of length 4 in each diagram, we see that $\zeta=\zeta'$ and $\varphi+\zeta=\varphi'+\zeta'$ from which it follows that $\varphi=\varphi'$.
In order to obtain the desired result we will consider the four subcases (a) $\varphi>\theta$, $\varphi'>\theta'$, (b) $\varphi>\theta$, $\varphi'=\theta'$, (c) $\varphi=\theta$, $\varphi'>\theta'$, and (d) $\varphi=\theta$, $\varphi'=\theta'$.
In (a), we get $\varepsilon=0=\varepsilon'$ from the hypothesis, so $\theta=\theta'$ and $\eta=\eta'$.
In (b) we have $\varepsilon=0$, hence $\theta=\theta'+\varepsilon'\GEQ\theta'=\varphi'=\varphi>\theta$, a contradiction, so this case cannot occur.
Similarly, in (c), we have $\varepsilon'=0$, hence $\theta'=\varepsilon+\theta\GEQ\theta=\varphi=\varphi'>\theta'$, again a contradiction.
Finally in (d) we have $\theta=\varphi=\varphi'=\theta'$, so $\varepsilon=\varepsilon'$ and $\eta=\eta'$.
We conclude that $K=K'$.
\end{proof}

\section{Explicit results on  minimal determining sets}\label{sec5}

In this section we obtain explicit descriptions of the minimal determining sets of certain $W$-graph ideals in the symmetric group.
These are (right) $W$-graph ideals corresponding to the Kazhdan-Lusztig cell $\mathfrak C(\lambda)$ for $\lambda=(\lambda_1,\lambda_2,\lambda_3,1^r)$ or $(1^r,\mu_1,\mu_2,\mu_3)$, or to the union of cells obtained by inducing such a Kazhdan-Lusztig cell $\mathfrak C(\lambda)$.

\begin{theorem}
\label{thm:3.13a}
Let $r\GEQ4$ and $s\GEQ t\GEQ u\GEQ 1$.
Let $\lambda=(\lambda_1,\ldots,\lambda_r)$ be a composition where
$\tilde\lambda=(\lambda_1,\lambda_2,\lambda_3)$ is a permutation of
$(s,t,u)$ and $\lambda_i=1$ if $i>3$.
Then $\mathcal E(\lambda)=\mathcal E_s(\lambda)$ if, and only if,  $\lambda_1=s$ or $t$.

Explicit descriptions of the elements of $\mathcal E(\lambda)$ and
$\mathcal E_s(\lambda)$ are given in the proof and the values of
$|\mathcal E_s(\lambda)|$ and $|\mathcal E(\lambda)|-|\mathcal E_s(\lambda)|$ are given in
Tables~\ref{tbl:5a} and~\ref{tbl:6a}.
\end{theorem}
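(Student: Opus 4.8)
The plan is to reduce to the case of a single trailing $1$ (Hypothesis~(*), $r=4$) and then to treat the two implications separately. For the reduction I would iterate the bijection $\psi\colon \mathcal E^{(\mu)}\to\mathcal E^{(\mu_*)}$ of Result~\ref{prop:3.2a}, starting from $\mu=(\lambda_1,\lambda_2,\lambda_3,1)$ and appending $1$'s one at a time until reaching $\lambda=(\lambda_1,\lambda_2,\lambda_3,1^{r-3})$. Appending a $1$ leaves $\lambda_1$ and the multiset $\{s,t,u\}$ unchanged, so the condition ``$\lambda_1=s$ or $t$'' is common to all the intermediate compositions. The one supplementary point to verify is that $\psi$ and its inverse preserve specialness --- which is clear from the recipe for $\psi$, since adjoining a single node in the minimal admissible position to a rearranged Young diagram again yields a rearranged Young diagram --- so that $\psi$ restricts to a bijection $\mathcal E_s^{(\mu)}\to\mathcal E_s^{(\mu_*)}$. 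Granting this, $\mathcal E^{(\lambda)}=\mathcal E_s^{(\lambda)}$ holds if and only if it holds for $(\lambda_1,\lambda_2,\lambda_3,1)$, and it suffices to argue under Hypothesis~(*).

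For the implication ``$\lambda_1=s$ or $t$ $\Rightarrow \mathcal E^{(\lambda)}=\mathcal E_s^{(\lambda)}$'', let $D\in\mathcal E^{(\lambda)}$, say $D=D(y,\lambda)$ with $y\in Y(\lambda)$, so that $y=w_D$ and, by Result~\ref{res:8a}(ii), $D$ is admissible. By Lemma~\ref{lemma:lambda1=st}, $D$ is the support of a form-A $s$-path $\Pi$, whence by Remark~\ref{rem_4.13}(ii) the diagram $D(\Pi)$ is special and $y$ is a prefix of $w_{D(\Pi)}$. As a special diagram is admissible (Result~\ref{res:8a}(iii)), $w_{D(\Pi)}\in Z(\lambda)$; since $y$ belongs to the minimal determining set $Y(\lambda)$ it cannot be a proper prefix of another element of $Z(\lambda)$, forcing $y=w_{D(\Pi)}$. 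Thus $y\in Z_s(\lambda)$, hence $y\in Y(\lambda)\cap Z_s(\lambda)=Y_s(\lambda)$, and therefore $D=D(y,\lambda)$ is special. This proves $\mathcal E^{(\lambda)}\subseteq\mathcal E_s^{(\lambda)}$; the reverse inclusion is immediate.

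For the converse I would prove the contrapositive: if $\lambda_1=u$ and $t>u$, I must produce a $y\in Y(\lambda)$ with $D(y,\lambda)$ non-special, i.e.\ an element of $\mathcal E^{(\lambda)}\setminus\mathcal E_s^{(\lambda)}$. The candidates are the diagrams $M^{(\mathcal S)}$ (for $\tilde\lambda=(u,s,t)$) and $N^{(\mathcal S)}$ (for $\tilde\lambda=(u,t,s)$) of Example~\ref{ex:3.10a} with $\theta>0$: these are admissible, satisfy Hypothesis~($\dag$), are non-special precisely because $\theta>0$, and carry a form-B but no form-A $s$-path. The strategy is to show that any admissible diagram possessing no form-A $s$-path (by Lemma~\ref{lemma:lambda1=st} this is exactly the regime $\lambda_1=u$, $t>u$) can be normalised by the column operations (C1)--(C5) of Remark~\ref{rem:operations} --- each of which passes from $w_D$ to a strictly longer element having $w_D$ as a prefix --- to one of these canonical diagrams $M^{(\mathcal S)}$ or $N^{(\mathcal S)}$; Lemmas~\ref{lemma:4.7} and~\ref{lemma:4.8} then establish that distinct canonical diagrams are pairwise prefix-incomparable, identifying the canonical forms as maximal elements of $Z(\lambda)$ whose minimal-column diagrams are non-special.

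The main obstacle is exactly this normalisation-and-maximality step: one must check that (C1)--(C5) exhaust every way a form-B diagram can be a proper prefix of a larger admissible diagram, that the resulting canonical diagrams admit no further enlargement (in particular are never prefix-below a special diagram, so that their rim elements are genuinely non-special), and that each canonical diagram is its own minimal-column representative. Once the non-special rim elements are pinned down in this way, the explicit descriptions of $\mathcal E^{(\lambda)}$ and $\mathcal E_s^{(\lambda)}$ follow: $\mathcal E_s^{(\lambda)}$ is enumerated by the special (form-A) diagrams parametrised as in Example~\ref{ex:3.8a}, and $\mathcal E^{(\lambda)}\setminus\mathcal E_s^{(\lambda)}$ by the admissible non-special diagrams $M^{(\mathcal S)},N^{(\mathcal S)}$ with $\theta>0$ parametrised by their tuples $\mathcal S$; counting these yields the values of $|\mathcal E_s^{(\lambda)}|$ and $|\mathcal E^{(\lambda)}|-|\mathcal E_s^{(\lambda)}|$ recorded in Tables~\ref{tbl:5a} and~\ref{tbl:6a}.
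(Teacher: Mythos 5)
Your architecture is exactly the paper's: reduction to $r=4$ via Result~\ref{prop:3.2a}, Case $\lambda_1\in\{s,t\}$ via Lemma~\ref{lemma:lambda1=st} and Remark~\ref{rem_4.13}(ii) plus maximality in $Y(\lambda)$ (this direction you carry out correctly, and your explicit check that $\psi$ preserves specialness is a sensible supplement to the paper's terse induction), and Case $\lambda_1=u$, $t>u$ via normalisation to the canonical diagrams $M^{(\mathcal S)}$, $N^{(\mathcal S)}$ with prefix-incomparability supplied by Lemmas~\ref{lemma:4.7} and~\ref{lemma:4.8}. However, the ``main obstacle'' you flag and leave open is not a routine verification --- it is the bulk of the paper's proof, and it is not dispatched by the operations (C1)--(C5) alone. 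Those operations are defined (Remark~\ref{rem:operations}) only on diagrams already satisfying Hypothesis~($\dag$), whereas the minimal-column diagram $D=D(d,\lambda)$ of an arbitrary $d\in Z(\lambda)$ generally does not satisfy~($\dag$). The indispensable intermediate step is the construction of a diagram $E$ satisfying~($\dag$) with $t^Ed$ standard: when $D$ has no form-A $s$-path this requires passing through a form-B path $\Pi$, the (non-admissible) diagram $D(\Pi)$, a minimally chosen length-4 path, and the combinatorial Claim producing indices $j''\LEQ j'$ with $x<b_{j'}<c_{j''}<y$ so that the rearrangement of Table~\ref{tbl:4a} is legitimate; none of this appears in your outline, and it is here, not in exhausting enlargements by (C1)--(C5), that the real work lies.

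A second, related gap: your dichotomy for Case II is wrong as stated. Lemma~\ref{lemma:lambda1=st} is one-directional, and in the regime $\lambda_1=u$, $t>u$ an admissible diagram may very well possess a form-A $s$-path; the paper must (and does) treat these separately, via the diagram $\check E$ obtained from $D(\check\Pi)$ by relocating the fourth-row node. Normalising these form-A diagrams is what produces the canonical diagrams with $\theta=0$, i.e.\ precisely the \emph{special} members of $\mathcal E^{(\lambda)}$ in Case II --- so restricting attention to form-B-only diagrams cannot yield the full description or the counts claimed in the theorem. Your closing enumeration compounds this: in Case II, $\mathcal E_s^{(\lambda)}$ is \emph{not} parametrised by the families $F_C$, $G_C$, $H_C$ of Example~\ref{ex:3.8a} (those pertain to $\lambda_1\in\{s,t\}$), but by the $M^{(\mathcal S)}$, $N^{(\mathcal S)}$ with $\theta=0$; and conversely the Case I entries of Table~\ref{tbl:5a} require the uniqueness arguments of Example~\ref{ex:3.8a} ($F_C=F_{C'}$ etc.), which you do not engage. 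So while the route is the right one, the proposal as it stands proves only the implication $\lambda_1\in\{s,t\}\Rightarrow\mathcal E^{(\lambda)}=\mathcal E_s^{(\lambda)}$, and leaves the converse, the explicit descriptions, and both tables resting on an unexecuted and partly misidentified core step.
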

\begin{proof}

Since the results for $r>4$ can be obtained easily by induction on $r$ using Result~\ref{prop:3.2a} assuming the results for $r=4$, we consider the case $r=4$.
In particular, we have that $\lambda$ satisfies Hypothesis~(*).
We suppose further that $d\in Z(\lambda)$ and we let $D=D(d,\lambda)\,(\in\mathcal D^{(\lambda)})$, so $d=w_D$.
Then $D$ is an admissible diagram by Result~\ref{res:8a}(ii).
Below, the cases (I) $\lambda_1=s$ or $\lambda_1=t$, and (II) $\lambda_1=u$ and $t>u$, will be considered separately.

Case (I): $\lambda_1=s$ or $t$:
By Lemma~\ref{lemma:lambda1=st}, we have $D=s(\Pi')$ for some form-A $s$-path $\Pi'$.
Moreover, $d$ is a prefix of $w_{D'}$ for some special diagram $D'\in\mathcal D^{(\lambda)}$ by Remark~\ref{rem_4.13}(ii).
In particular, $Y(\lambda)=Y_s(\lambda)$ if $\lambda_1=s$ or $t$.
Diagram $D'$ has $s$ columns and the nodes on any row of $D'$ of length $s$ have column indices $j$, for $1\LEQ j\LEQ s$.
The node on row 4 of $D'$ is $(4,l)$, where $1\LEQ l\LEQ s$.
Let $A$ be the set of column indices of the nodes of $D'$ of any row of $D'$ of length $t$, and let $B$ be the set of column indices of the nodes of $D'$ on any one of its rows of length $u$.
Then $l\in B\subseteq A\subseteq\{1,\ldots, s\}$.
Clearly, $|A|=t$ and $|B|=u$.
Finally, let $m=\min(B)$.

The case $\lambda$ is a partition is already covered in [MP03, Lemma 3.3] and in this case $\mathcal E^{(\lambda)}$ has a single element , the Young diagram associated to $\lambda$.
It will be convenient, in order to complete case (I), to consider the subcases (i) $\tilde\lambda=(s,u,t)$, (ii) $\tilde\lambda=(t,s,u)$, and (iii) $\tilde\lambda=(t,u,s)$ (even though these subcases are not disjoint and there are also intersections with the partition case already considered).

Subcase (I)(i): $\tilde\lambda=(s,u,t)$:
Let $\delta\colon A\to\{1,\ldots,t\}$ be the order preserving bijection and let $C=B\delta$.
Consider the diagram $E=F_C$ as in Example~\ref{ex:3.8a}(i).
Then $E$ is a special, and hence admissible, diagram such that $t^Ew_{D'}$ is a standard $D$-tableau.
So $w_{D'}$ is a prefix of $w_E$.
Also from the discussion in Example~\ref{ex:3.8a}(i) we can deduce that $\mathcal E^{(\lambda)}$ is precisely the set of diagrams $F_C$ for all the different permitted choices of $C$.
In particular, $|Y(\lambda)|=|Y_s(\lambda)|=\binom{t}{u}$ in this case.
(For example in the case $t=u$ there is a unique choice for $F_C$ and this is the  Young diagram associated to the partition $\lambda$.)

Subcase (I)(ii): $\tilde\lambda=(t,s,u)$: Let $C$ be the set of $u$ smallest indices in $A$ and consider the diagram $E=G_C$ as in Example~\ref{ex:3.8a}(ii).
Clearly, $w_{D'}$ is a prefix of $w_E$.
From the discussion in Example~\ref{ex:3.8a}(ii) we see that $\mathcal E^{(\lambda)}$ consists precisely of the diagrams $G_C$ for all the different permitted choices of $C$. It follows that in this case $|Y(\lambda)|=|Y_s(\lambda)|=\binom{s-t+u}{u}$.

Subcase (I)(iii): $\tilde\lambda=(t,u,s)$: 
We will need to split this case into two subcases
(a) $m\,(=\min(B))\GEQ s-t+1$, and (b) $m<s-t+1$.
If $m\GEQ s-t+1$, we set $C=B$ and $v=m$ (so $v=\min(B)=\min(C)$) and let $\tilde C=C-\{v\}$, (so $\tilde C\subseteq\{s-t+2,\ldots,s\}$).
Comparing with Example~\ref{ex:3.8a}(iii) 
we also let $\tilde v=s-t+1$ (since $v\GEQ s-t-1$).
If $m<s-t+1$, let $\delta\colon A\to\{m\}\cup\{s-t+2,\ldots,s\}$ be the order preserving bijection.
We also set $v=m$, $\tilde C=(B-\{m\})\delta$ and $C=\tilde C\cup\{v\}$.
Hence $v\,(=m)=\min(C)$ and $\tilde C\subseteq\{s-t+2,\ldots,s\}$.
Comparing with Example~\ref{ex:3.8a}(iii), we set $\tilde v=v$ in this subcase.

In either of the subcases (a) or (b) above we get, by setting $E=H_C$, that $t^{E}w_{D'}$ is a standard $E$-tableau, with $E\in\mathcal D^{(\lambda)}$ being a special (and hence admissible) diagram.
Thus, combining with the last paragraph of Example~\ref{ex:3.8a}(iii) we conclude that in the case $\tilde\lambda=(t,u,s)$, the set $\mathcal E^{(\lambda)}$ consists of the diagrams $H_C$ for all the different permitted choices of~$C$.
For the subcase $m\GEQ s-t+1$ there are $\binom{t}{u}$ such diagrams $H_C$, which is the number of $u$-sets in $\{s-t+1,\ldots,s\}$ as these determine $v$ uniquely.
For the subcase $m<s-t+1$ there are $(s-t)\binom{t-1}{u-1}$ such diagrams $H_C$ since, here, we can combine each of the $s-t$  choices of $v$ with each of the $\binom{t-1}{u-1}$ choices of $C-\{v\}$ in $\{s-t+2,\ldots,s\}$.
We conclude that for $\tilde \lambda=(t,u,s)$ we have $|Y(\lambda)|=|Y_s(\lambda)|=(s-t)\binom{t-1}{u-1}+\binom{t}{u}$.
Observe that in the special case $t=s$ (here $s-t+1=1$, so we cannot have $m<s-t+1$), the $\binom{t}{u}$ different diagrams $H_C$ which occur are precisely the $\binom{t}{u}$ different diagrams $F_C$ occurring in subcase (I)(i) with $s=t$.

Case (II): $\lambda_1=u$ and $t>u$ (which is equivalent to $\lambda_1\ne s$ and $\lambda_1\ne t$):
Recall that $d$ denotes an arbitrary element of $Z(\lambda)$ and $D=D(d,\lambda)$, but in this case diagram $D$ may or may not have form-A $s$-paths.
Our upshot is to show that $d$ is a prefix of $w_K$ for some admissible diagram $K$ of shape $M^{(\mathcal S)}$ or $N^{(\mathcal S)}$ (as these are defined in Example~\ref{ex:3.10a}) according as $\tilde\lambda=(u,s,t)$ or $(u,t,s)$.
As an intermediate goal, we aim to show that $d$ is a prefix of $w_E$, for some diagram  $E\in\mathcal D^{(\lambda)}$ satisfying Hypothesis~($\dag$), but also having some additional properties as we will see later.

First, we consider the case $D$ does not have a form-A $s$-path.
Since $D$ is admissible, Lemma~\ref{lem:3.12a} ensures that $D$ has a form-B $s$-path.
Choose one such a form-B $s$-path $\Pi=(\pi_1,\ldots,\pi_s)$ and define the $s$-tuple $\hat a=(a_1,\ldots,a_s)$ by setting $a_j$ to be the length of $\pi_j$ for $1\LEQ j\LEQ s$.
Let $j_1$ be the first $j$ with $a_j=3$ and let $j_2$ be the second such~$j$.

Comparing with Lemma~\ref{lem:3.12a}  we see that $\pi_{j_1}$ is the path of length 3 in $\Pi$ containing the node on row 4 (otherwise $\Pi$ would be equivalent to a form-A $s$-path in $D$).
Next, we form the diagram $\tilde E=D(\Pi)$, see Remark~\ref{rem_4.13}.
From the same remark we see that $t^{\tilde E}d$ is a standard  $\tilde E$-tableau, since $\Pi$ is ordered.
Clearly $\tilde E$ is not admissible since it has no path of length 4.
The $\tilde E$-tableau $t^{\tilde E}d$ has exactly $s$ columns and takes the form given in Table~\ref{tbl:3a}.
In this table we denote by $b_j$ or $c_j$ the entries on rows 2 or 3, accordingly, of column $j$ of $t^{\tilde E}d$ for $1\LEQ j\LEQ s$.
Symbol $\hat C$ denotes the columns with index $j>j_2$; these have entries either on rows 1, 2 and 3, or on rows 2 and 3 or on a single row (row 2 if $\tilde\lambda=(u,s,t)$ or row 3 if $\tilde\lambda=(u,t,s)$).
See Lemma~\ref{lem:3.12a} for the distribution of the nodes of $\tilde E$ in its columns, taking into account the way $\tilde E$ has been constructed.
In particular, $\tilde E$ has no columns of length 1 if $s=t$.

Entries $b_j$ or $c_j$ may be blank if $j\not\in\{j_1,j_2\}$, however $b_j$ (resp., $c_j$) cannot be blank if $\tilde\lambda=(u,s,t)$ (resp., $\tilde\lambda=(u,t,s)$) again by Lemma~\ref{lem:3.12a}.
None of $b_{j_1}$, $b_{j_2}$, $c_{j_1}$, $c_{j_2}$ is blank from our hypothesis that they belong to columns of length 3.
Entry $y$ is not blank and it is the sole entry on row 4 of $t^{\tilde E}d$.
Moreover, $x$ is not blank and it is the first entry on row 1 of $t^{\tilde E}d$.
Hence, $y$ is the entry occupying the node $(4,j_1)$ of $\tilde E$ and $x$ is the entry occupying the node $(1,j_2)$ of $\tilde E$, with $j_1<j_2$.

\begin{table}[h]
\centering
$
\setlength{\arraycolsep}{0pt}
\setlength{\fboxsep}{0pt}
 t^{\tilde E}d =
\fbox{
$
\begin{array}{ccccccc}
{
\setlength{\arraycolsep}{3pt}
 \begin{array}{cccccccc}
  &  &   &  &  & &  &  x
 \\
 b_1 & \cdots & b_{j_1} & b_{j_1+1} & \cdots & b_{j'} & \cdots & b_{j_2}
 \\
 c_1 & \cdots & c_{j_1} & c_{j_1+1} & \cdots & c_{j'} & \cdots & c_{j_2}
 \\
  &  & y &  &  &  & &
 \mbox{}
 \end{array}
}
&
\begin{array}{l}
\makebox[0.75in]{
 $
 \begin{array}{c}
 \hat C\rule{0pt}{0.28in}
 \\
 \rule{0pt}{0.11in}
 \end{array}
 $
}
\\
\makebox[0.75in]{
\rule{0pt}{0.15in}
}
\end{array}
\end{array}
$
}
$
\caption{$t^{\tilde E}d$}
\label{tbl:3a}
\end{table}

The fact that $D$ has no form-A $s$-paths ensures that $x>b_{j_1}$ and $y<c_{j_2}$.
(If $x<b_{j_1}$ this would mean that $x$ lies in a column of tableau $t_D$ of weakly smaller  index than the index of the column containing $b_{j_1}$, which would mean in turn that $D$ has a form-$A$ $s$-path.
We can exclude the possibility $y>c_{j_2}$ by using similar argument.)

Now let $N_1$ be the node in $D$ with $N_1t_D=x$.
Clearly $N_1$ is the first node on row 1 of $D$ from the way $t^{\tilde E}d$ is related to $t_D$.
Since $D$ is admissible, it contains a path $\pi$ of length 4 with $N_1\in s(\pi)$.
Also let $N_i$, with $N_i$ on row $i$ of $D$ for $2\LEQ i\LEQ 4$, be the remaining nodes of $\pi$.
So $s(\pi)=\{N_1,N_2,N_3,N_4\}$ with $N_1t_D=x$ and $N_4t_D=y$
(the last equality follows from the fact that $N_4$ is the only node on row 4 of $D$).
We also have that $N_2t_D=b_k$ and $N_3t_D=c_l$ for some $k,l$ with $1\LEQ k,l\LEQ s$.
Since $\pi$ is a path in $D$, it follows from the way $t_D$ is constructed, that $x<b_k<c_l<y$.
Hence, the relations $b_{j_1}<x$ and $y<c_{j_2}$ obtained above, together with the standardness of $t^{\tilde E}d$,  give that $j_1<k$ and $l<j_2$.

\medskip
{\bf Claim.} There exist $j'$, $j''$ with $j_1\LEQ j''\LEQ j'\LEQ j_2$ and $x<b_{j'}<c_{j''}<y$.

\smallskip
{\bf Proof of Claim.}
Recall that $x<b_k<c_l<y$ and also that $b_{j_1}<x$ where $j_1<k$ and $y<c_{j_2}$ where $l<j_2$.

Suppose first that $k\LEQ l$.
Then $j_1<k\LEQ l<j_2$.
Now at least one of the entries $b_l$ or $c_k$ is non-empty according as $\tilde\lambda=(u,s,t)$ or $(u,t,s)$.
The claim is now proved, using the standardness of $t^{\tilde E}d$, by setting $j'=j''=l$ or $j'=j''=k$ accordingly.
[If $b_l$ is non-empty, then $b_k\LEQ b_l<c_l$ since $k\LEQ l$ and $t^{\tilde E}d$ is standard.
So $x<(b_k\LEQ)\,b_l< c_l<y$ and $j_1<\,(k\LEQ)\, l<j_2$.
If $c_k$ is non-empty, then $x<b_k<c_k\,(\LEQ c_l)<y$ and $j_1<k\,(\LEQ l)<j_2$.]

Suppose now that $k>l$.
We will consider separately the four subcases
(a) $k\LEQ j_2$ and $l\GEQ j_1$,
(b) $k\LEQ j_2$ and $l<j_1$,
(c) $k> j_2$ and $l\GEQ j_1$, and
(d) $k> j_2$ and $l<j_1$.

In subcase (a), we have $j_1\LEQ l<k\LEQ j_2$.
Since $x<b_k<c_l<y$, the claim is proved by setting $k=j'$ and $l=j''$.

In subcase (b), we have $l<j_1<k\LEQ j_2$, so $x<b_k<c_l<c_{j_1}<y$, again using the standardness of $t^{\tilde E}d$.
By setting $j''=j_1$ and $j'=k$, we see that $j_1\LEQ j''<j'\LEQ j_2$ and $x<b_{j'}<c_{j''}<y$ as required.

In subcase (c), we have $j_1\LEQ l<j_2<k$, so by setting $j'=j_2$ and $j''=l$ we get $j_1\LEQ j''<j'=j_2$ and $x<b_{j_2}\,(=b_{j'})<b_k<c_l\,(=c_{j''})<y$ as required.

Finally, in subcase (d), we have $l<j_1<j_2<k$ so $b_{j_2}<b_k$ and $c_l<c_{j_1}$.
Moreover, using the standardness of $t^{\tilde E}d$, we get $x<b_{j_2}<b_k<c_l<c_{j_1}<y$.
By setting $j'=j_2$ and $j''=j_1$, we have $j_1\LEQ j''<j'\LEQ j_2$ and $x<b_{j'}<c_{j''}<y$, thus completing the proof of the claim.

Now let $\hat E\in\mathcal D^{(\lambda)}$ be the underlying diagram of the tableau $t$ obtained by moving the entries in tableau $t^{\tilde E}d$ according to the scheme in Table~\ref{tbl:4a}, where any blank column, that is one corresponding to a blank $c_j$ (with $j''<j\LEQ j'$) or $b_j$ (with $j''\LEQ j<j'$) according as $\tilde\lambda=(u,s,t)$ or $\tilde\lambda=(u,t,s)$, is removed.

\begin{table}[h]
\centering
$
\setlength{\arraycolsep}{0pt}
\setlength{\fboxsep}{0pt}
 t =
\fbox{
$
\begin{array}{ccccccc}
{
\setlength{\arraycolsep}{3pt}
 \begin{array}{cccccccccccccc}
     &        &           &         &           &        &  x
 \\
 b_1 & \cdots & b_{j''-1} & b_{j''} & b_{j''+1} & \cdots & b_{j'}  &           &        &        & b_{j'+1} & \cdots & b_{j_2}
 \\
 c_1 & \cdots & c_{j''-1} &         &           &        & c_{j''} & c_{j''+1} & \cdots & c_{j'} & c_{j'+1} & \cdots & c_{j_2}
 \\
     &        &           &         &           &        &  y      &           &        &        &          &        &
 \mbox{}
 \end{array}
}
&
\begin{array}{l}
\makebox[0.75in]{
 $
 \begin{array}{c}
 \hat C\rule{0pt}{0.28in}
 \\
 \rule{0pt}{0.11in}
 \end{array}
 $
}
\\
\makebox[0.75in]{
\rule{0pt}{0.15in}
}
\end{array}
\end{array}
$
}
$
\caption{$t=t^{\hat E}d$}
\label{tbl:4a}
\end{table}

Clearly, from the construction, $t=t^{\hat E}d$.
Moreover, $\hat E$ satisfies Hypothesis~($\dag$) and $t$ is a standard $\hat E$-tableau.
So, by setting $E=\hat E$, the intermediate goal of showing that $d$ is a prefix of $w_{E}$ for some diagram $E\in\mathcal D^{(\lambda)}$ satisfying Hypothesis~($\dag$) has been achieved in the case $D$ has no form-A $s$-path.
So we assume now that $D$ has a form-A $s$-path, say $\check\Pi$, and we aim to show that $D(\check\Pi)$ can be transformed into a diagram $\check E\in\mathcal D^{(\lambda)}$ satisfying Hypothesis~($\dag$) and which also satisfies the additional requirement that $d$ is a prefix of $w_{\check E}$.
First observe that $D(\check\Pi)$ is special and $t^{D(\check\Pi)}d$ is a standard $D(\check\Pi)$-tableau by Remark~\ref{rem_4.13}.
The construction of $\check E$ from $D(\check\Pi)$ is as follows:
If the column of length 4 in $D(\check\Pi)$ lies to the left of all columns of length 3 in $D(\check\Pi)$, set $\check E=D(\check\Pi)$.
Otherwise, let $\check E$ be the diagram obtained from $D(\check\Pi)$ by moving the single node on row 4 of $D(\check\Pi)$ to the first column of $D(\check\Pi)$ having length 3 (keeping this node on row~4).
In either case diagram $\check E$ satisfies Hypothesis~($\dag$) and $t^{\check E}d$ is a standard $\check E$-tableau by its construction.

For the rest of the proof we denote by $E$ any diagram of shape $\hat E$ or $\check E$ which has been obtained from $D$ via any of the above processes.
In particular, irrespective of whether $E$ is of `type $\hat E$' or of `type $\check E$', diagram $E$ satisfies Hypothesis~($\dag$) and $d$ is a prefix of $w_E$ since $t^Ed$ is standard.
Moreover, the columns of length 1 in $E$ (excluding the regions in $\hat E$ containing the nodes corresponding to the entries $b_j$ in tableau $t$ for $j''\LEQ j<j'$ and entries $c_j$ in tableau $t$ for $j''<j\LEQ j'$ --- these regions can be considered to be `empty' in $\check E$) have their single node on row 2 if $\tilde\lambda=(u,s,t)$, and on row 3 if $\tilde\lambda=(u,t,s)$.
We can also observe that $E$ has $s'$ columns, where $s'=s$ in the case $E$ is of `type $\check E$', and $s'=s+n_2$ where $n_2=|\{j\colon a_j=2$ and $j''<j\LEQ j'\}|$
(resp., $n_2=|\{j\colon a_j=2$ and $j''\LEQ j< j'\}|$) if $\tilde\lambda=(u,s,t)$ (resp., $\tilde\lambda=(u,t,s)$) in the case $E$ is of `type $\hat E$'.
(Note that in the special case $s=t$ we have $n_2=j'-j''$.)

Let $\hat\alpha_D=(\hat\alpha_1,\ldots,\hat\alpha_{s'})$ be the determining tuple for diagram $E$.
For the rest of the proof it will be convenient to consider the subcases $\tilde\lambda=(u,s,t)$ and $\tilde\lambda=(u,t,s)$ separately.
Operations (C1)--(C5) discussed in Remark~\ref{rem:operations} will play a key role.

Subcase (II)(i): $\tilde\lambda=(u,s,t)$:
Applying operations of type (C1) to the columns in the region of $E$ which lies to the left of the (unique) column of length 4 in $E$, and also operations of types (C2) and (C3) to the columns in the region of $E$ which lies to the right of the column of length 4, we see that $w_E$ is a prefix of $w_{K'}$ for some diagram $K'\in\mathcal D^{(\lambda)}$ satisfying Hypothesis~($\dag$) which can be described as follows:
Diagram $K'$ has $s'$ columns (same number of columns as $E$) and its determining tuple $\hat\alpha_{K'}$  begins with an $\varepsilon'$-tuple of 2's, followed by an $\eta'$-tuple of 1's, then has a single 4 and, following the 4, it has a $\theta'$-tuple of  $\bar1$'s ($\theta'=0$ if $E$ is of `type $\check E$'), followed by a $\zeta'$-tuple of 2's, followed by a $\psi'$-tuple of 1's and 3's containing exactly $(u-1)$ 3's.
From the construction, we see that $\eta'\GEQ\theta'$.
[This is obvious if $E$ is of `type $\check E$' since $\theta'=0$ in this case.
If $E$ is of `type $\hat E$', observe that $\eta'\GEQ j'-j''\GEQ\theta'$, since by Lemma~\ref{lem:3.12a} none of the $b_j$'s for $j''\LEQ j<j'$ is empty, whereas we could possibly have some empty $c_j$'s for $j''<j\LEQ j'$.]
If $\eta'=\theta'$ or $\zeta'=0$, we set $K=K'$.
If $\eta'>\theta'$ and $\zeta'>0$, we apply operation (C5) to the first column of length 2 occurring from the left in the block of $\zeta'$ columns of length 2, and then by repeated applications of operation (C1) we can `carry' the column of length 1 (with a single node on row 2) which has resulted from the application of (C5), to the position immediately to the left of the block of $\psi'$ columns of length 1 or 3.
Next, we set $\zeta'(0)=\zeta'$, $\theta'(0)=\theta'$ and, for $k\GEQ1$, $\zeta'(k)=\zeta'(k-1)-1$ and $\theta'(k)=\theta'(k-1)+1$, where $k$ is the number of repetitions of the above routine.
The process stops after $r$ repetitions of the routine, where $r$ is the smallest integer such that either $\zeta'(r)=0$ or $\eta'=\theta'(r)$ and
we let $K$ be the diagram obtained from $K'$ at this stage of the process.
Clearly $K$ has $s'+r$ columns and $\zeta'(r)=0$ if $\eta'>\theta'(r)$.
Moreover $K$ satisfies Hypothesis~($\dag$) and $d$ is a prefix of $w_K$ (see Remark~\ref{rem:operations}).
Comparing with Example~\ref{ex:3.10a}(i) we see that $K=M^{(\mathcal S)}$ for some permitted tuple $\mathcal S=(\varepsilon,\eta,\theta,\zeta,\psi,\mathcal C)$ with $\eta\GEQ\theta$ and the further constraint $\zeta=0$ if $\eta>\theta$.
(In fact we have $\varepsilon=\varepsilon'$, $\eta=\eta'$, $\theta=\theta'(r)=\theta'+r$, $\zeta=\zeta'(r)=\zeta'-r$, $\psi=\psi'+r$.)
Also recall from Example~\ref{ex:3.10a}(i) that for such tuples $\mathcal S$, diagram $M^{(\mathcal S)}$ is an admissible diagram and, in addition, $M^{(\mathcal S)}$ is special if, and only if, $\theta=0$.
It now follows from Lemma~\ref{lemma:4.7} that $\mathcal E^{(\lambda)}$ is precisely the set of diagrams $M^{(\mathcal S)}$ where the tuple $\mathcal S$ of non-negative integers satisfies the above constraints $\eta\GEQ\theta$ and $\zeta=0$ if $\eta>\theta$.
(Clearly $\mathcal E_s^{(\lambda)}$ is the subset of $\mathcal E^{(\lambda)}$  obtained by imposing the further restriction $\theta=0$.)

Counting nodes on the second and third rows, we get
$s=\varepsilon+\eta+1+\zeta+\psi$ and
$t=\varepsilon+\theta+\zeta+u$.
So, $\theta+\zeta\LEQ t-u$ and $\psi=s-t-(u-1)-\bar{\eta}$,
where $0\LEQ\bar{\eta}=\eta-\theta\LEQ s-t$.
Thus, by setting $v=s-t+u$, we see that for $M^{(\mathcal S)}$ to belong to $\mathcal E^{(\lambda)}-\mathcal E_s^{(\lambda)}$ (resp., for $M^{(\mathcal S)}$ to belong to $\mathcal E_s^{(\lambda)}$), the number of permitted determining tuples with $\zeta\GEQ1$ is  
\[
\sum_{\theta=1}^{t-u-1} \sum_{\zeta=1}^{t-u-\theta}\binom{v-1}{u-1}
=\binom{t-u}{2}\binom{v-1}{u-1}
\quad
\Big(\mbox{resp.,\ } \sum_{\zeta=1}^{t-u}\binom{v-1}{u-1}=(t-u)\binom{v-1}{u-1} \Big)
\]
and the number of permitted determining tuples with $\zeta=0$ is    
\[
\sum_{\bar{\eta}=0}^{s-t} \sum_{\theta=1}^{t-u}\binom{v-1-\bar{\eta}}{u-1}
=(t-u)\binom{v}{u}
\quad
\Big(\mbox{resp.,\ }\sum_{\bar{\eta}=0}^{s-t}\binom{v-1-{\eta}}{u-1}=\binom{v}{u} \Big).
\]

Thus, we have determined $|\mathcal E_s(\lambda)|$ and $|\mathcal E(\lambda)-\mathcal E_s(\lambda)|$ to be the
values given in Tables~\ref{tbl:5a} and~\ref{tbl:6a} and the corresponding
diagrams take the form $M^{(\mathcal{S})}$
where $\mathcal{S}=
(\varepsilon,\eta,\theta,\zeta,\psi,\mathcal{C})$
and $\mathcal{C}$ denotes an arbitrary set of $u-1$ columns among
the last $\psi$ columns.

Subcase (II)(ii): $\tilde\lambda=(u,t,s)$:
We use similar arguments as for subcase (II)(i) but this time we begin by first applying a sequence of operations from types (C2), (C3) and (C4) to the columns lying in the region of $E$ which is to the right of the column of length 4 and a sequence of operations of type (C3) to the columns of $E$ lying in the region to the left of the column of length 4, in order to obtain a diagram $K'\in\mathcal D^{(\lambda)}$ with the following properties:
Diagram $K'$ satisfies Hypothesis~($\dag$), the $K'$-tableau $t^{K'}d$ is standard, $K'$ has exactly $s'$ columns and the determining tuple $\hat\alpha_{K'}$ of $K'$ begins with an $\eta'$-tuple of $\bar1$'s, followed by an $\varepsilon'$-tuple of 2's, followed by a $\theta'$-tuple of 1's, followed by a single 4, and following the 4, a $\varphi'$-tuple of $\bar1$'s followed by a $\zeta'$-tuple of 2's, followed by a $(u-1)$-tuple of 3's.
From the construction of $E$ and the types of operation used to obtain $K'$ from $E$, we also see that $\varphi'\GEQ j'-j''\GEQ\theta'$, if $E$ is `of type $\hat E$'.
In the case $E$ is `of type $\check E$', the relation $\varphi'\GEQ\theta'$ holds trivially since $\theta'=0$ in this case.

Finally, in a similar fashion as in case (II)(i), now applying operations of types (C5) and (C3) to the columns of $K'$ corresponding to the $\varepsilon'$-tuple of 2's which lie in the region to the left of the column of length 4 (but working from right to the left on this block of columns) we obtain a diagram $K$ from $K'$ with $w_{K'}$ a prefix of $w_K$ (hence with $d$ a prefix of $w_K$) such that
$K=N^{(\mathcal S)}$ for some tuple $\mathcal S=(\eta,\varepsilon,\theta,\varphi,\zeta)$ of non-negative integers satisfying the further constraints $\varphi\GEQ\theta$ and $\varepsilon>0$ if $\varphi>\theta$.
(Compare with Example~\ref{ex:3.10a}(ii).)
Lemma~\ref{lemma:4.8}  now ensures that the set  $\mathcal E^{(\lambda)}$ is precisely the set of diagrams  $N^{(\mathcal S)}$ for which the conditions $\varphi\GEQ\theta$ and $\varepsilon>0$ if $\varphi>\theta$ are satisfied by $\mathcal S$.
For the subset $\mathcal E^{(\lambda)}_s$ we need the further restriction $\theta=0$ since from Example~\ref{ex:3.10a}(ii) we know that $N^{(\mathcal S)}$ is special if, and only if, $\theta=0$.

Counting nodes on the second and third rows, we get
$t=\varepsilon+\theta+\zeta+u$ and
$s=\eta+\varepsilon+\varphi+\zeta+u$.
So, $\theta+\zeta\LEQ t-u$ and $\bar{\varphi}=s-t-\eta$
where $\bar{\varphi}=\varphi-\theta$.
Given $\theta$ and $\bar{\varphi}$
with $0\LEQ\theta\LEQ t-u$ and $0\LEQ\bar{\varphi}\LEQ s-t$, the
quantities $\eta$, $\varphi$ and $\varepsilon+\zeta$ are determined.
If additionally, $\bar{\varphi}>0$ then $\varepsilon=0$, so $\zeta$
is determined,
whereas if $\bar{\varphi}=0$ then $0\LEQ\varepsilon\LEQ t-u-\theta$.
Thus, for $N^{(\mathcal S)}$ to belong to $\mathcal E^{(\lambda)}-\mathcal E_s^{(\lambda)}$ (resp., for $N^{(\mathcal S)}$ to belong to $\mathcal E_s^{(\lambda)}$) the number of permitted determining tuples with $\bar{\varphi}\GEQ1$
is
\[
\sum_{\theta=1}^{t-u} \sum_{\bar{\varphi}=1}^{s-t} 1
=(t-u)(s-t)
\qquad \Big(\mbox{resp.,\ } \sum_{\varphi=1}^{s-t}1=s-t\Big)
\]
and the number of permitted deternining tuples with $\bar{\varphi}=0$ is
\[
\sum_{\theta=1}^{t-u} \sum_{\varepsilon=0}^{t-u-\theta} 1
=\binom{t-u+1}{2}
\qquad \Big(\mbox{resp.,\ } \sum_{\varepsilon=0}^{t-u} 1=t-u+1\Big).
\]

Thus, we have determined $|\mathcal E_s(\lambda)|$ and $|\mathcal E(\lambda)-\mathcal E_s(\lambda)|$ to be the
values given in Tables~\ref{tbl:5a} and~\ref{tbl:6a} and the corresponding
diagrams take the form $N^{(\mathcal{S})}$
where $\mathcal{S}= (\eta,\varepsilon,\theta,\varphi,\zeta)$.
\end{proof}
\par
\begin{table}[h]
\centering
$
\setlength{\arraycolsep}{4pt}
\begin{array}{ccccccc}
\tilde\lambda & (s,t,u) & (s,u,t) & (t,s,u)
& (t,u,s) & (u,s,t) & (u,t,s)
\\\hline
\\[-1ex]
& 1 & \dbinom{t}{u} & \dbinom{s-t+u}{u}
& \begin{array}{c}
(s-t)\dbinom{t-1}{u-1}\\[2ex]
+\dbinom{t}{u}
\end{array}
& \begin{array}{c}
(t-u)\dbinom{s-t+u-1}{u-1}\\[2ex]
+\dbinom{s-t+u}{u}
\end{array}
& s-u+1
\end{array}
$
\caption{
Values of $|\mathcal E_s^{(\lambda)}|$,
$s\GEQ t\GEQ u$, $r>3$. (Theorem~\ref{thm:3.13a})
}
\label{tbl:5a}
\end{table}
\par
\begin{table}[h]
\centering
$
\setlength{\arraycolsep}{8pt}
\begin{array}{ccccccc}
\tilde\lambda & (u,s,t) & (u,t,s)
\\\hline
\\[-1ex]
&
\dbinom{t-u}{2}\dbinom{s-t+u-1}{u-1}
+(t-u)\dbinom{s-t+u}{u}
&
(t-u)(s-t)+\dbinom{t-u+1}{2}.
\\[1ex]
\end{array}
$
\caption{
Values of $|\mathcal E^{(\lambda)}-\mathcal E_s^{(\lambda)}|$,
$s\GEQ t\GEQ u$, $r>3$. (Theorem~\ref{thm:3.13a})
}
\label{tbl:6a}
\end{table}

\begin{remark}\label{Rem5.2}
(i) In view of Result~\ref{res:10a}, we immediately get from Theorem~\ref{thm:3.13a} (and Tables~\ref{tbl:5a} and~\ref{tbl:6a}) all the corresponding information about the set~$\mathcal E^{(\mu)}$ where the composition $\mu$ has the form $(1^r,\mu_1,\mu_2,\mu_3)$.

(ii)
Let $\lambda$ be a composition of $n$ with $r$ parts.
Recall that $Z(\lambda)$ is a right ideal in $S_n=\langle s_1,\ldots,s_{n-1}\rangle$ and that $Z(\lambda)\hat{\mathfrak X}$ is a right ideal in $S_{n+1}=\langle s_1,\ldots,s_n\rangle$, where $s_i$ is the basic transposition $(i\ i+1)$, and $\hat{\mathfrak X}$ denotes the set of distinguished right coset representatives of $S_n$ in $S_{n+1}$.
The longest element of $\hat{\mathfrak X}$ is the element $s_ns_{n-1}\ldots s_1\,(=(1\ 2\ldots n+1)$ in cycle-notation).
Given $D\in\mathcal E^{(\lambda)}$, we define $\hat D=\{(r+1,1)\}\cup\{(i,j+1)\colon (i,j)\in D\}$, so $\hat D$ is a diagram of size $n+1$.
By Proposition~\ref{prop:3.12}, the minimal determining set of the right ideal $Z(\lambda)\hat{\mathfrak X}$ in $S_{n+1}$ is the set $\{w_{\hat D}\colon D\in\mathcal E^{(\lambda)}\}$.
(Comparing with the discussion in Section~\ref{subsec_sym_gr}, we can consider this set to be the rim of the induced union of cells $w_JZ(\lambda)\hat{\mathfrak X}$.)
Thus, the explicit results in~\cite{MPa17}, \cite{MPa21} and also in Section~\ref{sec5} of the present paper on the minimal determining sets of various families of right ideals of the form $Z(\lambda)$ lead to an explicit description of the minimal determining sets of the corresponding induced right ideals $Z(\lambda)\hat{\mathfrak X}$.

(iii)
The results of this paper together with the results of~\cite{MPa17} and~\cite{MPa21} give complete information about the set~$\mathcal E^{(\lambda)}$ for all compositions $\lambda$ of $n$ for $n\LEQ 6$.
Using similar methods we have also completed the case $n=7$.
More detailed information about the sizes of the rims of the corresponding cells in the form of tables can be obtained from any of the authors on request.
\end{remark}

\end{section}

\end{document}